\newtheorem{theorem}{Theorem}[section]
\newtheorem{proposition}[theorem]{Proposition}
\newtheorem{lemma}[theorem]{Lemma}
\theoremstyle{definition}
\newtheorem{definition}[theorem]{Definition}
\theoremstyle{remark}
\newtheorem{remark}{Remark}[section]
\numberwithin{equation}{section}
\newcommand{\R}{\mathbb{R}}
\newcommand{\N}{\mathbb{N}}
\newcommand{\Z}{\mathbb{Z}}
\newcommand{\C}{\mathbb{C}}
\newcommand{\pvector}[1]{
  \begin{pmatrix}
    #1
  \end{pmatrix}} 
\newcommand{\ddirac}[1]{
 \,\boldsymbol{\delta}\!\pvector{#1}\!} 
\renewcommand{\d}{\,{\rm d}}
\renewcommand{\hat}{\widehat}
\newcommand{\scriptD}{\mathcal{D}}
\newcommand{\scriptE}{\mathcal{E}}
\newcommand{\scriptS}{\mathcal{S}}
\newcommand{\jp}[1]{\langle{#1}\rangle}
\newcommand{\qtq}[1]{\:\text{#1}\:}
\DeclareMathOperator*{\wklim}{wk-lim}
\DeclareMathOperator*{\supp}{supp}
\newenvironment{CI}{\begin{list}{{\ $\bullet$\ }}{%
\setlength{\topsep}{0mm}\setlength{\parsep}{0mm}\setlength{\itemsep}{0mm}%
\setlength{\labelwidth}{8mm}\setlength{\itemindent}{9mm}\setlength{\leftmargin}{0mm}%
\setlength{\labelsep}{1mm} }}{\end{list}}
                    \setlist[enumerate, 1]{1\textsuperscript{o}}
\begin{document}

\title[Exponentials rarely maximize cone extension]{Exponentials rarely maximize Fourier extension inequalities for cones}

\author[Negro]{Giuseppe Negro}
\author[Oliveira e Silva]{Diogo Oliveira e Silva}
\author[Stovall]{Betsy Stovall}
\author[Tautges]{James Tautges}

\address{  
Center for Mathematical Analysis, Geometry and Dynamical Systems \&
Instituto Superior T\'{e}cnico\\
Av. Rovisco Pais\\ 
1049-001 Lisboa, Portugal.} 
\email{giuseppe.negro@tecnico.ulisboa.pt}
\email{diogo.oliveira.e.silva@tecnico.ulisboa.pt}

\address{University of Wisconsin--Madison\\
480 Lincoln Drive\\ 
Madison, WI 53706\\
USA.}
\email{stovall@math.wisc.edu}
\email{tautges2@math.wisc.edu}

\subjclass[2020]{42B10}
\keywords{Sharp restriction theory, maximizers, critical points, cone, half-wave equation, Penrose transform}

\date{\today}

\begin{abstract}
We prove the existence of maximizers and the precompactness of $L^p$-normalized maximizing sequences modulo symmetries for all valid scale-invariant Fourier extension inequalities on the cone in $\R^{1+d}$. 
In the range for which such inequalities are conjectural, our result is conditional on the boundedness of the extension operator. 
Global maximizers for the $L^2$ Fourier extension inequality on the cone in $\R^{1+d}$ have been characterized in the lowest-dimensional cases $d\in\{2,3\}$. 
We further prove that these functions are critical points for the $L^p$ to $L^q$ Fourier extension inequality if and only if $p = 2$.
\end{abstract}

\maketitle

\section{Introduction}

In this article, we consider the Fourier extension operator on the cone,
\begin{equation}\label{eq:extension_op}
\scriptE f(t,x) := \int_{\R^d} e^{i(t,x)\cdot(|\xi|,\xi)} f(|\xi|,\xi)\, \tfrac{\d\xi}{|\xi|}, \qquad (t,x)\in\R^{1+d},
\end{equation}
initially defined on smooth functions with compact support in $\R^d \setminus\{0\}$.  Denoting by $\d\mu(\tau, \xi):=\ddirac{\tau-\lvert \xi\rvert}\lvert \xi\rvert^{-1}\d\tau \d\xi$ the unique up to normalization Lorentz-invariant measure on the cone, the restriction conjecture predicts the validity of the estimate 
\begin{equation}\label{eq:cone_restr}
    \lVert \mathcal E f\rVert_{L^q(\mathbb R^{1+d})}\le  C_{p,q}\lVert f\rVert_{L^p(\d\mu)},
\end{equation}
for all exponents $1\leq p,q\leq \infty$ satisfying
 \begin{equation}\label{eq:RCrange}
1\leq p<\frac{2d}{d-1} \text{ and } {q=\frac{d+1}{d-1}p'=:q(p).}
 \end{equation}
The variable $q$ will be used without decoration when $p$ is clear from context. 

The case $(p,q)=(1,\infty)$ of \eqref{eq:cone_restr} is elementary, and the case $(p,q)=(2,2\frac{d+1}{d-1})$ goes back to the work of Stein \cite{St93}, Strichartz \cite{St77} and Tomas \cite{To75}. The restriction conjecture for the cone has been established by Barcelo \cite{Ba85} (orthogonality) when $d=2$, by Wolff \cite{Wo01} (bilinear methods) when $d=3$, and recently by Ou--Wang \cite{OW22} (polynomial partitioning) when $d=4$. The question is open in all higher dimensions, with the current record due to Ou--Wang \cite{OW22}.  

We are interested in the {\it sharp} form of \eqref{eq:cone_restr}, and so consider the operator norm
\begin{equation}
    \label{eq:optimalC}
    {\bf A}_{p, q}:=\sup_{f\neq 0}\frac{\lVert \mathcal Ef\rVert_{L^q(\mathbb R^{1+d})}}{\lVert f\rVert_{L^p(\d\mu)}},
\end{equation}
where the supremum is taken over all nonzero $f\in L^p(\d\mu)$.
In other words, ${\bf A}_{p, q}$ is the optimal, or smallest, constant $C_{p,q}$ for which \eqref{eq:cone_restr} holds.
 It is elementary to check that ${\bf A}_{1,\infty}=1$, and that any nonnegative $f$ maximizes the corresponding inequality.
Our first main result addresses the precompactness of arbitrary maximizing sequences for \eqref{eq:cone_restr} and, in particular, the existence of maximizers.

\begin{theorem}\label{T:extremizers exist intro}
Assume that $\scriptE$ extends as a bounded linear operator from $L^{p_0}(\d\mu)$ to {$L^{q_0}(\R^{1+d})$}, for some $1 < p_0 < {2d}/(d-1)$ and $q_0:=q(p_0)$.  Then, for all $1 < p < p_0$ and $q:=q(p)$,  there exist nonzero functions $f \in L^p(\d\mu)$, such that $\|\scriptE f\|_q = {\bf A}_{p,q}\|f\|_p$.  Furthermore, if $\{f_n\} \subseteq L^p(\d\mu)$ is any norm-one sequence with $\lim_{n\to\infty} \|\scriptE f_n\|_q = {\bf A}_{p,q}$, then there exists a subsequence of $\{f_n\}$ and a sequence $\{S_n\}$ of symmetries of $\scriptE$ such that $S_n f_n$ converges in $L^p$ to a maximizer of $\scriptE$.    
\end{theorem}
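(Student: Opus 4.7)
The plan is to prove Theorem~\ref{T:extremizers exist intro} by the concentration-compactness method, adapted to the full symmetry group of the extension operator on the cone. This group consists of space-time translations, isotropic dilations, and Lorentz transformations of $\R^{1+d}$. The two central ingredients are a refined extension inequality, enabled by the hypothesized strong bound at $(p_0,q_0)$ with $p_0>p$, and a profile decomposition for $L^p$-bounded sequences, followed by an argument ruling out the splitting of mass between distinct profiles.

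First I would derive a refined extension inequality of the form
\begin{equation*}
\|\scriptE f\|_{L^q(\R^{1+d})} \lesssim \|f\|_{L^p(\d\mu)}^{1-\theta}\,\Bigl(\sup_\tau \sigma_\tau(f)\Bigr)^{\theta},
\end{equation*}
where $\tau$ ranges over a Whitney-type family of caps on the cone, $\sigma_\tau(f)$ is a localized $L^p$-type quantity, and $\theta\in(0,1)$. One obtains this by interpolating between the assumed bound at $(p_0,q_0)$ and the trivial $L^1\to L^\infty$ bound, applied to a cap decomposition adapted to the scaling and Lorentzian structure of the cone; the strict inequality $p<p_0$ is what makes the interpolation produce a genuine gain over a pure $\|f\|_{L^p}$ bound. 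The family of caps is parametrized, modulo a compact stabilizer, by the symmetry group itself, so that any cap can be pulled back to a fixed reference cap by a suitable symmetry of $\scriptE$.

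Applied to a normalized maximizing sequence $\{f_n\}$, the refined inequality produces caps $\{\tau_n\}$ at which $\sigma_{\tau_n}(f_n)$ stays bounded below. Precomposing with symmetries $T_n$ adapted to those caps and extracting a subsequence yields $T_n f_n \rightharpoonup \phi^1$ weakly in $L^p$, with $\phi^1\neq 0$, and $\scriptE(T_n f_n)\to\scriptE\phi^1$ almost everywhere. Iterating this extraction produces a profile decomposition
\begin{equation*}
f_n = \sum_{j=1}^J S_n^j\phi^j + r_n^J,\qquad \limsup_{n\to\infty}\|\scriptE r_n^J\|_q \xrightarrow[J\to\infty]{} 0,
\end{equation*}
with profiles that are pairwise orthogonal in the sense that $(S_n^j)^{-1}S_n^{j'}\rightharpoonup 0$ when $j\neq j'$. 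A Brezis--Lieb computation on the $L^p$ side, together with an almost-everywhere-plus-orthogonality argument on the $L^q$ side, then yields the Pythagorean identities
\begin{equation*}
\|f_n\|_p^p = \sum_{j}\|\phi^j\|_p^p + o_J(1),\qquad \|\scriptE f_n\|_q^q = \sum_{j}\|\scriptE\phi^j\|_q^q + o_J(1).
\end{equation*}
Since $p<q$ throughout the range \eqref{eq:RCrange} when $p>1$, comparing these identities with the maximizing property and the bound $\|\scriptE\phi^j\|_q\le{\bf A}_{p,q}\|\phi^j\|_p$ forces exactly one profile to be nonzero with unit $L^p$-norm and $\|r_n^J\|_p\to 0$; choosing $S_n:=(S_n^1)^{-1}$ then gives $S_n f_n\to\phi^1$ in $L^p$, and $\phi^1$ is a maximizer.

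The main obstacle is the refined inequality itself, especially capturing all concentration modes allowed by the noncompact symmetry group. The hypothesis $p<p_0$ is essential: without it the interpolation cannot produce a quantitative gain. A secondary, cone-specific difficulty is that concentration can occur along null directions via unbounded Lorentz boosts, so the cap decomposition and the notion of orthogonal profiles must be Lorentz-covariant, and the almost-orthogonality and Brezis--Lieb splittings must be shown to be stable under this noncompact action. Once these ingredients are in place, the existence of maximizers and the precompactness of $L^p$-normalized maximizing sequences modulo symmetries follow from the concentration-compactness scheme outlined above.
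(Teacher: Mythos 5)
Your outline follows the same broad concentration-compactness scheme as the paper (a refined estimate, then a profile decomposition, then an argument that mass cannot split), but the first key ingredient is asserted rather than obtained, and this is where the real work lies. Interpolating the assumed $(p_0,q_0)$ bound against the trivial $L^1\to L^\infty$ bound controls $\|\scriptE (f\chi_\tau)\|_q$ cap by cap, but it does not by itself yield a sup-over-caps refinement: the whole difficulty is in reassembling the caps, which requires a quantitative gain for \emph{pairs} of separated pieces. In the paper this is done in two stages dictated by the cone's symmetry group: a bilinear estimate between dyadic annuli with gain $2^{-c_0|k_1-k_2|}$, obtained from Strichartz estimates with asymmetric regularity exponents, and a bilinear estimate between angularly separated sectors, resting on the Wolff--Tao bilinear cone restriction theorem, combined with a Whitney decomposition of the off-diagonal and almost-orthogonality of the products $\scriptE f_\sigma\,\scriptE f_{\sigma'}$; the refinement must moreover see not just sectors but ``chips'' (sectors intersected with level sets of $|f|$), since the later $L^2$-based machinery needs the extracted chunk to be bounded, not merely frequency-localized. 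This two-stage localization (first to an annulus, then to a sector via sectorial expansions) is also precisely what neutralizes the unbounded Lorentz boosts you flag as a difficulty but do not resolve; after it, only spacetime translations survive as concentration parameters, and the Bahouri--G\'erard type $L^2$ profile decomposition can be imported and transferred to $L^p$.

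The second gap is the claimed Pythagorean identity $\|f_n\|_p^p=\sum_j\|\phi^j\|_p^p+o_J(1)$ in $L^p(\d\mu)$ via Br\'ezis--Lieb. After frequency localization the profiles differ by modulations $e^{-i(t_n^{(j)},x_n^{(j)})\cdot(|\xi|,\xi)}$, which neither converge a.e.\ nor separate supports on the cone, so Br\'ezis--Lieb applies on the extension side (where stationary phase gives pointwise decay) but not on the restriction side, and no exact $\ell^p$ decoupling of the $L^p(\d\mu)$ norms is available when $p\neq 2$. The paper proves only the one-sided bound $\liminf_n\|f_n\|_p\ge\bigl(\sum_j\|\phi^{(j)}\|_p^{\tilde p}\bigr)^{1/\tilde p}$ with $\tilde p=\max\{p,p'\}$, obtained by a vector-valued interpolation/duality argument together with a stationary-phase almost-orthogonality estimate; this weaker inequality, combined with $q>\tilde p$, suffices to single out one dominant profile, and the upgrade from weak to norm convergence is then supplied by the uniform convexity of $L^p$, not by the (unavailable) exact Pythagorean expansion. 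As written, both the refined inequality and the $L^p$ decoupling step would fail or remain unproven, so the proposal is a correct road map of the strategy but not yet a proof.
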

\noindent The symmetries of $\scriptE$ to which we refer here are dilations, Lorentz boosts, and spacetime translations, and will be given explicitly in \S\ref{sec:existence}. 
Precompactness of maximizing sequences for \eqref{eq:cone_restr} modulo symmetries was previously established by Quilodrán \cite{Qu13}  ($d=2$) and Ramos \cite{Ra12} ($d\geq 2$), but only in the Strichartz case $p=2$. Another result along the same lines when $p=2$, but in the non-endpoint case, can be found in \cite{FVV12}. 
Analogues of Theorem \ref{T:extremizers exist intro} are known for the paraboloid \cite{St20} and the sphere \cite{FS22}.  Indeed, the proof of Theorem \ref{T:extremizers exist intro} follows the principle established in \cite{St20} of first proving that maximizing sequences possess good frequency localization, and then using the $L^2$ theory to establish a profile decomposition of frequency-localized sequences with non-negligible extensions. However, the symmetry group of the cone is more complex than that of the sphere or paraboloid, leading us to perform the  initial frequency localization in two stages, first to a single dyadic annulus, and then to a sector within that annulus. 

Once the existence of maximizers has been established, it is natural to ask whether they can be explicitly determined.
This has been done in just a few cases, which are most easily related to the present context via the following preliminary observations. 
The operator $\mathcal E$ defined in \eqref{eq:extension_op} corresponds to the half-wave equation $u_t=iDu$. 
Defining the {\it half-wave propagator} as
\begin{equation}\label{eq:HalfWaveProp}
e^{itD}g(x):=\frac{1}{(2\pi)^d} \int_{\R^d}  e^{it|\xi|} \widehat g(\xi)  e^{ix\cdot\xi}\d\xi,
\end{equation}
one readily sees that, if $\widehat g(\xi)=|\xi|^{-1}f(|\xi|,\xi)$, then 
$e^{itD}g(x)=(2\pi)^{-d}\mathcal E f(t,x)$, and 
\[\|e^{itD}g\|_{L^q(\R^{1+d})} \|\widehat g\|_{L^p(\R^d,|\xi|^{p-1}\d\xi)}^{-1}=
(2\pi)^{-d} \|\mathcal E f\|_{L^q(\R^{1+d})} \|f\|_{L^p(\d\mu)}^{-1}.\]
Consequently, \eqref{eq:cone_restr} can be recast in sharp form as 
\begin{equation}\label{eq:cone_restr_recast}
\|e^{itD}g\|_{L^q(\R^{1+d})}\leq \frac{{\bf A}_{p,q}}{(2\pi)^d} \|\widehat g\|_{L^p(\R^d,|\xi|^{p-1}\d\xi)},
\end{equation}
which is invariant under the same group of symmetries as $\mathcal E$. The Strichartz case $(p,q)=(2,2\frac{d+1}{d-1})$ reads, in sharp form,
\begin{equation}\label{eq:SteinTomas}
    \lVert e^{itD} g\rVert_{L^{2\frac{d+1}{d-1}}(\mathbb R^{1+d})} \le \frac{{\bf A}_{2,2(d+1)/(d-1)}}{(2\pi)^d}\lVert g\rVert_{\dot{H}^{1/2}(\R^d)}.
\end{equation}
We can now introduce the class of functions that are known to  maximize~\eqref{eq:cone_restr_recast} in a few cases. Henceforth we define
\begin{equation}\label{eq:Fourier_Trans_fstar}
    \widehat{g}_\star(\xi):= C_d \lvert \xi\rvert^{-1}\exp(-\lvert \xi\rvert),
\end{equation}
where $C_d>0$ is chosen in order to ensure that\footnote{This Fourier transform is  easily computed via the Gaussian superposition  \[\lvert\xi\rvert^{-1}\exp(-\lvert\xi\rvert)=\frac1{2\pi^{1/2}}\int_0^\infty \exp\left(-s-\frac{\lvert\xi\rvert^2}{4s}\right)\, \frac{\d s}{s^{3/2}}.\]} 
\begin{equation}\label{eq:Foschian_Physical} 
    g_\star(x)=\left(\frac{2}{1+\lvert x\rvert^2}\right)^{\frac{d-1}{2}}.
\end{equation}
\begin{definition}\label{def:Foschian}
        A nonzero function $f_\star$ is an $\mathfrak F$\textit{-function} if it can be obtained from $g_\star$ by applying one or more symmetries of~\eqref{eq:cone_restr_recast}; thus, 
        \begin{equation*}
            \widehat{f}_\star(\xi)= \lvert \xi\rvert^{-1}\exp(A\lvert \xi\rvert+b\cdot \xi + c),
        \end{equation*}
        for some $A,c\in\mathbb C$, $b\in\mathbb C^d$ and $|\Re(b)|<-\Re(A)$.
\end{definition}
\noindent This definition alludes to Foschi, who in~\cite[Eqs.\@ (33) and (46)]{Fo07} characterized the maximizers of~\eqref{eq:SteinTomas} in the lowest dimensional cases $d\in\{2,3\}$ as those functions from Definition~\ref{def:Foschian}; see also \cite{Ca09}.
So far these remain the only known instances of global maximizers to~\eqref{eq:SteinTomas}, and provide an affirmative partial answer to the general question of whether exponentials maximize Fourier extension for a conic section; see the recent survey \cite{NOST22}.
It is thus sensible to ask whether this is an isolated fact, or whether $\mathfrak F$-functions maximize other cases of~\eqref{eq:SteinTomas} and, more generally, of~\eqref{eq:cone_restr_recast}.

A necessary condition for an $\mathfrak F$-function $f_\star$ to maximize \eqref{eq:cone_restr_recast} is that it solves the Euler--Lagrange equation 
\begin{equation}\label{eq:EulerLagrange}
    \Re \int_{\R^{1+d}} \lvert e^{itD}f_\star \rvert^{q-2}\overline{e^{itD}f_\star} e^{itD}g\, \d t\d x 
    = \lambda_{p,q} \Re \int_{\R^d} (\lvert \widehat{f}_\star\rvert^{p-2} \overline{\widehat{f}_\star} \widehat{g})(\xi)\lvert \xi\rvert^{p-1}\, \d\xi,
\end{equation}
where the Lagrange multiplier $\lambda_{p,q}=\lambda_{p,q}(f_\star)$ does not depend on the arbitrary test distribution $g$. Here we require $\hat{g}\in L^p(\mathbb R^d, \lvert\xi\rvert^{p-1}\, \d \xi)$ and $e^{itD} g\in L^q(\mathbb R^{1+d})$. The latter condition follows immediately from the former if $e^{itD}$ defines a bounded operator in the sense of \eqref{eq:cone_restr_recast}, and it ensures that~\eqref{eq:EulerLagrange} continues to be well-defined even without such an assumption.  If~\eqref{eq:EulerLagrange} holds for all such $g$, then $f_\star$ is said to be a {\it critical point} for inequality \eqref{eq:cone_restr_recast}. We now state our second main result.
\begin{theorem}\label{thm:FoschianRarelyMaximize}
Let $d\geq 2$, let $1<p<{2d}/(d-1)$, {and set $q=q(p)$.} 
Then $\mathfrak F$-functions are critical points for the $L^p\to L^q$ inequality~\eqref{eq:cone_restr_recast} if and only if $p=2$.
\end{theorem}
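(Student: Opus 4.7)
The plan is to convert the Euler--Lagrange equation \eqref{eq:EulerLagrange} into a pointwise identity in Fourier space and then contrast its large-frequency asymptotics with the ``pure exponential'' form that the right-hand side would impose. By the invariance of \eqref{eq:EulerLagrange} under the symmetry group of \eqref{eq:cone_restr_recast}, we may assume $f_\star = g_\star$; the Gaussian superposition noted in the footnote to \eqref{eq:Foschian_Physical} then yields
\begin{equation*}
e^{itD}g_\star(t,x) = 2^{(d-1)/2}\,\phi^{-(d-1)/2}, \qquad \phi(t,x):=(1-it)^2+|x|^2.
\end{equation*}
Both sides of \eqref{eq:EulerLagrange} are continuous linear functionals of $g$; testing against an approximation of $\hat g(\xi) = \delta(\xi-\xi_0)$ and using that $\hat g_\star$ is real-positive, the critical-point condition becomes the pointwise identity
\begin{equation*}
K(\xi_0) := \int_{\R^{1+d}} |e^{itD}g_\star|^{q-2}\,\overline{e^{itD}g_\star}\, e^{i(t|\xi_0|+\xi_0\cdot x)}\,\d t\,\d x \;=\; \Lambda\,e^{-(p-1)|\xi_0|},
\end{equation*}
valid for every $\xi_0\in\R^d\setminus\{0\}$. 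Pairing \eqref{eq:EulerLagrange} directly with $\hat g = \hat g_\star$ identifies $\Lambda=\Lambda(p,d)$ as a positive multiple of the $L^p$--$L^q$ Rayleigh quotient of $f_\star$, and the $O(d)$-invariance of $g_\star$ makes $K$ radial: $K(\xi_0)=K(r)$, $r:=|\xi_0|$.

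The integrand of $K$ is a constant multiple of $\phi^{-a}\bar\phi^{-b}\,e^{i(tr+\xi_0\cdot x)}$, with $a=\tfrac{(d-1)(q-2)}{4}$ and $b=\tfrac{(d-1)q}{4}$. Since the zeros of $\bar\phi$ in the complex $t$-plane are exactly $\{t=\pm|x|+i\}$, the singularities of the integrand in the upper half plane all lie on the horizontal line $\mathrm{Im}\,t=1$, \emph{independently of $p$ and $q$}. Pushing the $t$-contour just below this line produces the uniform upper bound $|K(r)|\le C e^{-r}$. A finer analysis---residue formula when $b\in\N$, Hankel-type contour around each branch point $\pm|x|+i$ otherwise---combined with a standard stationary-phase analysis of the remaining $x$-integral (non-degenerate critical points at $\omega=\pm \xi_0/|\xi_0|$ on $S^{d-1}$) yields the leading asymptotic
\begin{equation*}
K(r) = c_{p,d}\,r^{a-1}\,e^{-r}\,\bigl(1+o(1)\bigr) \qquad (r\to\infty).
\end{equation*}

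Combining the two estimates with $K(r) = \Lambda e^{-(p-1)r}$: when $p<2$, the ratio $|K(r)|e^{(p-1)r}\le C e^{-(2-p)r}\to 0$ forces $\Lambda=0$, contradicting $\Lambda>0$; when $p>2$, the lower bound $|K(r)|\gtrsim r^{a-1}e^{-r}$ contradicts the faster decay of $\Lambda e^{-(p-1)r}$. Thus $p=2$ is necessary. The converse direction is, in the lowest dimensions, contained in the maximizer characterization of Foschi \cite{Fo07} and Carneiro \cite{Ca09}; for general $d$ it is an explicit consequence of the same residue/Hankel computation at the distinguished parameter values $(a,b)=(1,\tfrac{d+1}{2})$, where $K(r)=\Lambda_{2,d}\,e^{-r}$ holds exactly. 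The principal technical difficulty is verifying that $c_{p,d}\ne 0$ for $p>2$, which amounts to an unambiguous evaluation of the combined contour and angular stationary-phase prefactors; the $p=2$ benchmark provides a consistency check, and the computation reduces to classical residue calculus whenever $b$ is a positive integer.
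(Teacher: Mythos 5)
Your strategy (testing \eqref{eq:EulerLagrange} against bumps concentrating at a single frequency $\xi_0$ and comparing the resulting radial profile $K(r)$ with $e^{-(p-1)r}$ as $r\to\infty$) is genuinely different from the paper's, which instead tests against Penrose transforms of zonal spherical harmonics $Y_k$ and lets the degree $k\to\infty$, reducing everything to explicit Gegenbauer coefficients and a Watson integral of $J_kJ_{k+\nu}$ (Lemma~\ref{lem:Watson}). But as written your argument has a genuine gap precisely where the theorem lives: in the supercritical case the contradiction requires the asymptotic $K(r)=c_{p,d}\,r^{a-1}e^{-r}(1+o(1))$ with $c_{p,d}\neq 0$, and you explicitly defer the verification that $c_{p,d}\neq 0$. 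This is not a routine prefactor check: the leading term comes from combining the two branch points $t=\pm|x|+i$ with the two angular stationary points $\omega=\pm\xi_0/|\xi_0|$ and then integrating the surviving non-oscillatory combination over the radial variable $|x|$, and cancellation of such constants is exactly the sort of thing that can happen (indeed it \emph{must} happen at $p=2$, where $K(r)$ collapses to a pure exponential). The paper's route avoids this danger because the corresponding quantity is evaluated in closed form via Gamma functions (Proposition~\ref{prop:LHSasympt}), giving both sign and size with no unverified constant; your subcritical argument, by contrast, only needs an upper bound of the form $C_\eps e^{-(1-\eps)r}$ and is sound in outline.

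A second, more technical gap concerns the very definition of $K$ and the passage to the pointwise identity. You need $|e^{itD}g_\star|^{q-1}\in L^1(\R^{1+d})$ for the integral against a plane wave to converge absolutely, and since $|e^{itD}g_\star|\simeq|\phi|^{-(d-1)/2}$ with $|\phi|\simeq(1+|t|)(1+\big||x|-|t|\big|)$ near the light cone, this holds only when $(d-1)(q-1)/2>d$, i.e.\ $q>(3d-1)/(d-1)$. For $d\geq 3$ this fails for all $p\geq 2$ (and even slightly below $2$), and for $d=2$ it fails for part of the supercritical range; so on a large portion of the exponent range $K(\xi_0)$ is only an oscillatory/distributional object, and both the delta-approximation limit and the contour shift in $t$ followed by the $x$-integration require a regularization argument you do not supply. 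Finally, the ``if'' direction for general $d$ is asserted to follow from ``the same residue/Hankel computation'' at $(a,b)=(1,\tfrac{d+1}{2})$ but is not carried out (Foschi and Carneiro cover only $d\in\{2,3\}$); the paper instead gets $p=2$ for all $d\geq2$ from the Penrose computation (Remark~\ref{rem:SteinTomasStrichartz_case}, following \cite{GN20}). So: attractive and genuinely different scheme, but the supercritical nonvanishing constant, the convergence/regularization issues, and the general-$d$ converse all need to be supplied before this constitutes a proof.
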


\noindent Versions of Theorem \ref{thm:FoschianRarelyMaximize} have been established for paraboloids \cite{CQ14} and hyperbolic paraboloids \cite{COS22}, but the proofs are complex-analytic and tied to the fact that gaussians are entire functions. By contrast, any $\mathfrak F$-function is singular at the origin in Fourier space.


The proof of Theorem \ref{thm:FoschianRarelyMaximize} relies on the {\it Penrose transform}, a functional transform associated to a map that compactifies  $\mathbb R^{1+d}$ by conformally sending it into $[-\pi, \pi]\times \mathbb S^d$. Letting $\d\sigma$ denote the usual surface measure on the unit sphere $\mathbb S^d$, the left-hand side of~\eqref{eq:cone_restr_recast} can be expressed as 
\begin{equation}\label{eq:intro_symmetry}
    \lVert e^{itD}g\rVert_{L^q(\mathbb R^{1+d})}^q=\frac12 \int_{-\pi}^\pi \int_{\mathbb S^d} \left \lvert e^{iTD_{\mathbb S^d}} G\right\rvert^q \lvert \Omega\rvert^{(d+1)(\frac{p'}{2}-1)}\d \sigma\d T,
\end{equation}
where $g$ and $G$ are related via the Penrose transform, and $D_{\mathbb S^d}$ is the spherical fractional operator. What is especially relevant for our present discussion is the {\it conformal factor} $\Omega$, a non-constant function on $[-\pi, \pi]\times \mathbb S^d$ that acts as a symmetry breaker. Indeed, the exponent of $\Omega$ vanishes precisely when $p=2$, yielding invariance under arbitrary rotations of $\mathbb S^d$. This is a hidden symmetry of~\eqref{eq:cone_restr_recast} when $p=2$, which is the qualitative reason for $\mathfrak F$-functions not being critical points when $p\neq 2$. We highlight  that $\lvert \Omega\rvert^{(d+1)({p'}/{2}-1)}$ is integrable on $[-\pi, \pi]\times \mathbb S^d$ if and only if $p$, $q$ belong to the conjectural range~\eqref{eq:RCrange}; thus this constitutes an alternative derivation of the necessary conditions for the restriction conjecture to the cone.

The relevance of the Penrose transform to sharp restriction theory was realized in \cite{Ne18} and further explored in \cite{GN20, Ne22}.
In the Strichartz case $p=2$, when the Penrose transform extends to a surjective isometry of $H^{1/2}(\mathbb S^d)$ onto $\dot{H}^{1/2}(\mathbb R^d)$, $\mathfrak F$-functions  are known to be local maximizers (and thus critical points) for \eqref{eq:SteinTomas} in all dimensions $d\geq 2$ \cite{GN20}, but the case $p\neq 2$ does not seem to have been previously considered in the literature. 

Sharp restriction theory on the cone has a short but rich history. 
Further to the aforementioned works, we refer to  the papers \cite{BJ15,BJO16,BJOS17,BR13,Bulut} and the survey \cite{FOS17}.
It would be interesting to establish analogues of Theorems \ref{T:extremizers exist intro} and \ref{thm:FoschianRarelyMaximize} for the two-sheeted cone, even though the situation is different there as $\mathfrak F$-functions are not critical points in the Strichartz case $p=2$ whenever the dimension $d$ is even \cite{Ne18}. Ultimately, this is due to the failure of a formula analogous to~\eqref{eq:intro_symmetry} in the case of the two-sheeted cone in even spatial dimension; see §\ref{apsec:tangent}.

\subsection{Outline}
In \S\ref{sec:critical_points}, we prove Theorem \ref{thm:FoschianRarelyMaximize}. 
In \S\ref{sec:existence}, we prove Theorem \ref{T:extremizers exist intro}.
In Appendix~\ref{sec:penrose_tools}, we present the relevant background material on the Penrose transform  and expand on the  symmetry considerations related to~\eqref{eq:intro_symmetry}.
\subsection{Notation}
$\Re z$ and $\Im z$ denote the real and imaginary parts of a complex number $z\in\C$.
 The surface measure of the unit sphere $\mathbb S^{d-1}\subset \R^d$ is $|\mathbb S^{d-1}|=2\pi^{\frac{d}2}/
 \Gamma(\frac{d}2)$.
We use $X\lesssim Y$ or $Y\gtrsim X$ to denote the estimate $|X|\leq CY$ for an absolute positive constant $C$,  $X\simeq Y$ to denote the estimates $X\lesssim Y \lesssim X$, and $X\cong Y$ to denote the identity $X=CY$.
We  often require the implied constant $C$ in the above notation to depend on additional parameters, which we will indicate by subscripts (unless explicitly omitted); thus for instance $X \lesssim_j Y$ denotes an estimate of the form $|X| \leq C_jY$ for some $C_j$ depending on $j$. 





\section{Critical points}\label{sec:critical_points}

 In this section, we prove Theorem \ref{thm:FoschianRarelyMaximize}, which naturally splits into three cases: the {\it subcritical} case $1<p<2$, the {\it Strichartz} case $p=2$, and the {\it supercritical} case $2<p<2d/(d-1)$.
 In \S \ref{sec:EL}, we recall the simple derivation of the Euler--Lagrange equation \eqref{eq:EulerLagrange}. 
 In \S \ref{sec:P}, we apply the Penrose tools detailed in Appendix~\ref{sec:penrose_tools} to the right- and left-sides of \eqref{eq:EulerLagrange}, and obtain useful formulae which hold for every $1<p<2d/(d-1)$. The function $g_\star$ defined in \eqref{eq:Foschian_Physical} is a critical point for \eqref{eq:cone_restr_recast} when $p=2$; this is known from \cite[Remark~3.8]{GN20}, but it also follows from our formulae in Remark~\ref{rem:SteinTomasStrichartz_case} below. 
 In \S \ref{sec:s}, we treat the subcritical case via sign considerations. In \S \ref{sec:S},  we handle the supercritical case via asymptotic analysis.

    \subsection{The Euler--Lagrange equation}\label{sec:EL}
 Given an $\mathfrak F$-function $f_\star$, let
    \[\lambda_{p,q}(f_\star):=\|e^{itD}f_\star\|_{L^q(\R^{1+d})}^q \|\widehat f_\star\|_{L^p(\R^d,|\xi|^{p-1}\d\xi)}^{-q}.\]
    We see that $f_\star$ maximizes inequality~\eqref{eq:cone_restr_recast} if and only if the functional
    \begin{equation}\label{eq:Phi_Functional}
        \Phi_{p,q}(f):= \lambda_{p,q}(f_\star) \|\widehat f\|^q_{L^p(\R^d,|\xi|^{p-1}\d\xi)}-\|e^{itD}f\|^q_{L^q(\R^{1+d})}
    \end{equation}
    is nonnegative for every test distribution $f$ such that $\widehat{f}\in L^p(\mathbb R^d, \lvert \xi\rvert^{p-1}\, \d \xi)$ and $e^{itD}f \in L^q(\mathbb R^{1+d})$; these conditions ensure that ~\eqref{eq:Phi_Functional} is well-defined and finite. By construction, $\Phi_{p,q}(f_\star)=0$. 
    
    To compute the first variation associated to $\Phi_{p,q}$ at $f_\star$, i.e.\@ $\left.\partial_\varepsilon \Phi_{p,q}(f_\star + \varepsilon f)\right\rvert_{\varepsilon=0}$, we expand, for small $\varepsilon>0$,
    \begin{equation}\label{eq:compute_euler_lagrange}\notag
        \begin{split} 
            \|e^{itD}(f_\star+\varepsilon f)\|_q^q &=\|e^{itD}f_\star\|_q^q
            +\varepsilon q \Re \int_{\R^{1+d}}|e^{itD}f_\star|^{q-2}\overline{e^{itD}f_\star} e^{itD}f \d t \d x +o(\varepsilon),\\
            \|\widehat f_\star+\varepsilon\widehat f\|_p^q&=\|\widehat f_\star\|_p^q+\varepsilon q \|\widehat f_\star\|_p^{q-p}\Re\int_{\R^d} (|\widehat f_\star|^{p-2}\overline{\widehat f_\star}\widehat f)(\xi) |\xi|^{p-1} \d \xi+o(\varepsilon).
        \end{split}
    \end{equation}
    Here, the Landau symbols $o(\varepsilon)$ depend on $\lVert f_\star\rVert_p, \lVert f\rVert_p,\|e^{itD}f_\star\|_q, \|e^{itD}f\|_q$, and we abbreviated $\|\cdot\|_q=\|\cdot\|_{L^q(\mathbb{R}^{1+d})}$ and $\|\cdot\|_p=\|\cdot\|_{L^p(\R^d,|\xi|^{p-1}\d\xi)}$. 
    The Euler--Lagrange equation \eqref{eq:EulerLagrange} follows at once.

\begin{remark}
Any $\mathfrak F$-function $f_\star$ belongs to the same orbit as the function $g_\star$ from \eqref{eq:Foschian_Physical} under the action of the symmetry group $\mathcal S$ (see \S\ref{sec:symmetries} below). Since the functional $\Phi_{p, q}$ is $\mathcal S$-invariant, it follows that $f_\star$ is a critical point for \eqref{eq:cone_restr_recast} if and only if $g_\star$ is a critical point for \eqref{eq:cone_restr_recast}. Indeed, if $f_\star=Sg_\star$ for a given $S\in\mathcal S$, then
\begin{equation*}
    \begin{array}{cc}
        \Phi_{p,q}(f_\star + \varepsilon f)=\Phi_{p, q}(g_\star+\varepsilon g), & \text{provided }f=Sg.
    \end{array}
\end{equation*}
Therefore we will analyze the Euler--Lagrange equation~\eqref{eq:EulerLagrange} only at $g_\star$.  
\end{remark}

    \subsection{The effect of the Penrose transform}\label{sec:P}
    The strategy is to realize $g_\star$ as the Penrose transform of the constant function $\mathbf 1$ on $\mathbb S^d$, and to choose the test function $g$ as the Penrose transform of a single spherical harmonic. By compactifying the regions of integration in~\eqref{eq:EulerLagrange} via the Penrose map, this yields useful formulae which we will then explore. 
    \subsubsection{Right-hand side of \eqref{eq:EulerLagrange}}
    Via a change of variables, recalling~\eqref{eq:Fourier_Trans_fstar} and applying Plancherel's identity, the right-hand side of~\eqref{eq:EulerLagrange} can be rewritten as follows:
    \begin{equation}\label{eq:PenroseEulerLagrangeTwo}
        \begin{split}
           \textup{RHS}(\eqref{eq:EulerLagrange}) & \cong_{d,p}\Re\int_{\mathbb R^d} {e^{(1-p)\lvert \xi\rvert}} \widehat{g}(\xi)\, \d\xi \\ 
            &\cong_{d, p} \Re\int_{\mathbb R^d} g_\star\left(\frac{x}{p-1}\right) Dg(x)\, \d x.
        \end{split}
    \end{equation}
    We now introduce the Penrose transform $g=g(x)$ of an arbitrary $G=G(X_0, \vec X)$, defined via 
    \begin{equation}\label{eq:stereoproj_Cartesian}
        g(x) = (1+X_0)^\frac{d-1}{2} G(X_0, \vec X), \quad x=\frac{\vec X}{1+X_0},
    \end{equation}
    for $X=(X_0, X_1, \ldots, X_d)=(X_0, \vec X)\in \mathbb S^d\subset\R^{1+d}$ (see also Definition~\ref{def:Penrose_Transform}). Since $X_0^2+\lvert \vec X\rvert^2=1$, we infer 
    \begin{equation}\label{eq:xsquare_is_Xzero}
        \lvert x \rvert^2=\frac{1-X_0}{1+X_0}.
    \end{equation}
    Substituting~\eqref{eq:xsquare_is_Xzero} into $g_\star=2^{\frac{d-1}{2}}(1+\lvert \cdot \rvert^2)^\frac{1-d}{2}$ yields
    \begin{equation}\label{eq:dilated_Omega}
        g_\star\left(\frac{x}{p-1}\right) = (1+X_0)^{\frac{d-1}{2}}\left( \frac{2(p-1)^2}{(p-1)^2(1+X_0)+1-X_0}\right)^\frac{d-1}{2}.
    \end{equation}
    We observe that the right-hand side of~\eqref{eq:dilated_Omega} defines a \textit{zonal} function on $\mathbb S^d$ (i.e., one that depends on $X_0$ only). In particular, by considering~\eqref{eq:dilated_Omega} with $p=2$ and~\eqref{eq:stereoproj_Cartesian}, we recover that $g_\star$ is the Penrose transform of the constant function $\mathbf 1$; see Remark~\ref{rem:foschian_is_constant}.
    We make the Ansatz that the test function $g$ in~\eqref{eq:PenroseEulerLagrangeTwo} is the Penrose transform of 
    \begin{equation}\label{eq:F_is_Yk}
        G_k(X_0, \vec X)=Y_k(X_0),
    \end{equation}
    where $Y_k$ denotes a real-valued zonal spherical harmonic on $\mathbb S^d$ of degree $k\ge 2$.
    
    \begin{remark}\label{rem:no_low_degree_spherical_harmonics}
        We require an integer $k\ge 2$ because the spherical harmonics of degree zero and one correspond to symmetries of the functional $\Phi_{p,q}$ and are thus unsuitable to disprove~\eqref{eq:EulerLagrange}; see \S\ref{apsec:tangent} for a detailed discussion.
    \end{remark}
    \noindent We proceed to justify that such $g_k$ is an admissible test function as required in §\ref{sec:EL}.
    \begin{proposition}\label{rem:our_test_distrib_are_good}
      Let $d\geq 2$, let $1<p<{2d}/(d-1)$, and set $q=q(p)$. 
  If $G_k$ is given by \eqref{eq:F_is_Yk}, then its Penrose transform $g_k$ satisfies 
        \begin{equation}\label{eq:gadmissible} 
            \begin{array}{cc}
                \widehat{g}_k\in L^p(\mathbb R^d, \lvert\xi\rvert^{p-1}\, \d\xi),\,\,\, e^{itD}g_k\in L^q(\mathbb R^{1+d}).
            \end{array}
        \end{equation}
    \end{proposition}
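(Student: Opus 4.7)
The plan is to treat the two conditions in \eqref{eq:gadmissible} separately: the bound on $\|e^{itD}g_k\|_{L^q}$ is best handled on the sphere side via the Penrose identity \eqref{eq:intro_symmetry}, whereas the weighted $L^p$-integrability of $\widehat{g}_k$ is most cleanly obtained through a direct Fourier calculation.

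For the $L^q$ assertion, since $Y_k$ is a spherical harmonic of degree $k$ on $\mathbb{S}^d$, the function $G_k$ is an eigenfunction of the spherical fractional operator $D_{\mathbb{S}^d}$. Consequently, its propagation reduces to pointwise multiplication by a modulus-one scalar, so $|e^{iTD_{\mathbb{S}^d}}G_k(X)| = |Y_k(X_0)|$ for all $(T,X)\in[-\pi,\pi]\times\mathbb{S}^d$. Plugging this into \eqref{eq:intro_symmetry} yields
\begin{equation*}
\|e^{itD}g_k\|_{L^q(\R^{1+d})}^q \leq \|Y_k\|_{L^\infty(\mathbb{S}^d)}^q \cdot \tfrac12 \int_{-\pi}^{\pi}\int_{\mathbb{S}^d}|\Omega|^{(d+1)(p'/2-1)}\d\sigma\d T,
\end{equation*}
which is finite precisely because $(p,q)$ lies in the range \eqref{eq:RCrange}, the integrability of the conformal weight over $[-\pi,\pi]\times\mathbb{S}^d$ having been noted in the Introduction.

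For the Fourier-side assertion, combining \eqref{eq:stereoproj_Cartesian} with $1+X_0=2/(1+|x|^2)$ and expanding the polynomial $Y_k$ of degree $k$ exhibits $g_k$ as a finite linear combination of rational functions of the form $(1+|x|^2)^{-\alpha}$ with $\alpha \in \{(d-1)/2,\, (d+1)/2,\, \ldots,\, (d-1)/2+k\}$. Each such function has Fourier transform (up to an explicit constant) equal to $|\xi|^{\alpha-d/2}K_{d/2-\alpha}(|\xi|)$, where $K_\nu$ denotes the modified Bessel function of the second kind. The classical asymptotics $K_\nu(r)\sim\sqrt{\pi/(2r)}\,e^{-r}$ as $r\to\infty$ and $K_\nu(r)\sim\Gamma(|\nu|)2^{|\nu|-1}r^{-|\nu|}$ as $r\to 0^+$ (for $\nu\neq 0$) yield
\begin{equation*}
|\widehat{g}_k(\xi)| \lesssim \min\!\left(|\xi|^{-1},\, e^{-|\xi|/2}\right),\qquad \xi\in\R^d,
\end{equation*}
the $|\xi|^{-1}$ singularity at the origin coming from the summand with $\alpha=(d-1)/2$ (consistent with $\widehat{g}_\star(\xi)=C_d|\xi|^{-1}e^{-|\xi|}$), while all summands with $\alpha>d/2$ merely contribute bounded terms near the origin. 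Consequently
\begin{equation*}
\int_{\R^d}|\widehat{g}_k|^p\,|\xi|^{p-1}\d\xi \lesssim \int_{|\xi|\leq 1}|\xi|^{-1}\d\xi + \int_{|\xi|>1}e^{-p|\xi|/2}|\xi|^{p-1}\d\xi < \infty
\end{equation*}
for every $d\geq 2$ and every $1<p<2d/(d-1)$.

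The main technical input is the identification of $\widehat{g}_k$ through Bessel functions and the simultaneous tracking of its asymptotics at the origin and at infinity; this is classical but requires some bookkeeping across the various values of $\alpha$. An alternative that entirely sidesteps Bessel functions combines (i) integration-by-parts using that $\partial^\beta g_k\in L^1(\R^d)$ for all multi-indices with $|\beta|\geq 2$ (giving polynomial decay of arbitrary order for $\widehat{g}_k$ on $\{|\xi|\geq 1\}$), with (ii) a direct comparison near the origin in which one writes $g_k = c_k\,g_\star + r_k$ with $r_k$ a rational function decaying faster than $|x|^{-(d-1)}$, transferring the $|\xi|^{-1}$ behavior from the known expression for $\widehat{g}_\star$.
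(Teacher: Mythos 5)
Your proof is correct, and its $L^q$ half is exactly the paper's: there, the second condition in \eqref{eq:gadmissible} is observed to follow at once from \eqref{eq:intro_symmetry}, just as you argue, using that $G_k$ is an eigenfunction of $D_{\mathbb S^d}$ (so the propagated profile is bounded by $\lVert Y_k\rVert_{L^\infty(\mathbb S^d)}$) and that $\lvert\Omega\rvert^{(d+1)(p'/2-1)}$ is integrable on $[-\pi,\pi]\times\mathbb S^d$ precisely in the range \eqref{eq:RCrange}. For the weighted $L^p$ bound on $\widehat g_k$ you take a genuinely different route. The paper rewrites the norm as $\int_{\R^d}\lvert\widehat{Dg_k}(\xi)\rvert^p\,\frac{\d\xi}{\lvert\xi\rvert}$ as in \eqref{eq:gDg}, computes $Dg_k$ in closed form via the intertwining law (Lemma~\ref{lem:morpurgo}, yielding \eqref{eq:MYk}, an identity reused later in \eqref{eq:RHSPenrose}), and proves $\lvert\widehat{Dg_k}(\xi)\rvert\lesssim_m(1+\lvert\xi\rvert)^{-m}$ by integration by parts; the $\lvert\xi\rvert^{-1}$ singularity of $\widehat g_k$ at the origin is thus absorbed into the measure $\frac{\d\xi}{\lvert\xi\rvert}$, locally integrable for $d\ge 2$, and no special functions enter. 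You instead expand $g_k$ into the summands $(1+\lvert x\rvert^2)^{-\alpha}$ with $\alpha=\frac{d-1}{2}+j$, $0\le j\le k$, and quote the Bessel-kernel formula together with the asymptotics of $K_\nu$ at $0$ and at $\infty$; this buys explicit pointwise asymptotics of $\widehat g_k$ itself, at the cost of invoking the distributional Fourier transform of the non-integrable summand $\alpha=\frac{d-1}{2}$ (consistent with \eqref{eq:Fourier_Trans_fstar}) and of checking that the logarithmic case $\nu=0$, i.e.\ $\alpha=\frac d2$, never occurs --- which it does not, since $\alpha-\frac d2=j-\frac12$ is never an integer, a point worth stating explicitly.

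One cosmetic slip: the displayed bound $\lvert\widehat g_k(\xi)\rvert\lesssim\min\bigl(\lvert\xi\rvert^{-1},e^{-\lvert\xi\rvert/2}\bigr)$ cannot hold as written, since by your own analysis (and by \eqref{eq:Fourier_Trans_fstar}) $\widehat g_k$ genuinely blows up like $\lvert\xi\rvert^{-1}$ at the origin, where the minimum is $\simeq 1$. What your Bessel asymptotics give, and what your final integral estimate actually uses, is the two-regime bound $\lvert\widehat g_k(\xi)\rvert\lesssim\lvert\xi\rvert^{-1}$ for $\lvert\xi\rvert\le 1$ and $\lvert\widehat g_k(\xi)\rvert\lesssim e^{-\lvert\xi\rvert/2}$ for $\lvert\xi\rvert\ge 1$ (equivalently, $\lesssim\lvert\xi\rvert^{-1}e^{-\lvert\xi\rvert/2}$), so the conclusion stands unchanged.
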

    \begin{proof}
          To verify the first condition in \eqref{eq:gadmissible}, start by noting that
         \begin{equation}\label{eq:gDg}
            \lVert  \widehat{g} \rVert_{L^p(\mathbb R^d, \lvert\xi\rvert^{p-1}\, \d\xi)}^p=\int_{\mathbb R^d} \lvert \widehat{Dg}(\xi)\rvert^p \frac{\d\xi}{\lvert\xi\rvert},\quad \text{for any }g.
        \end{equation}
    On the other hand, from  the intertwining law (Lemma~\ref{lem:morpurgo}), identity \eqref{eq:F_is_Yk} and \eqref{eq:D_S_definition} it follows that
    \begin{equation}\label{eq:MYk}
        \begin{split}
        Dg_k(x)&=\left(k+\frac{d-1}2\right)(1+X_0)^{\frac{d+1}{2}} Y_k(X_0)\\  
        &= \left(k+\frac{d-1}2\right)\left( \frac{2}{1+\lvert x \rvert^2}\right)^{\frac{d+1}{2}} Y_k\left( \frac{1-\lvert x \rvert^2}{1+\lvert x \rvert^2}\right),
        \end{split}
    \end{equation}
    from which we infer
    \begin{equation}\label{eq:Dg_Hat}\notag
        \begin{array}{cc}
            \displaystyle \widehat{Dg_k}(\xi)\cong_{k, \nu}\int_{\mathbb R^d} (1+\lvert x \rvert^2)^{-\nu-1} Y_k\left(\frac{1-\lvert x \rvert^2}{1+\lvert x \rvert^2}\right) e^{-ix\cdot \xi}\, \d x, & \displaystyle \text{ where }\nu:=\frac{d-1}{2}.
        \end{array}
    \end{equation}
    We claim that $\lvert \widehat{Dg_k}(\xi)\rvert\lesssim_{k, \nu, m} (1+\lvert\xi\rvert)^{-m}$, for every $m\in\mathbb N$. Once this is proved, the required $L^p$ boundedness of $\widehat g$ will follow from \eqref{eq:gDg}. Since $Y_k$ is a polynomial of degree $k$ it suffices, for each $(\ell,m)\in \mathbb Z_{\ge 0}\times \N$, to establish the bound
    \begin{equation}\label{eq:bound_highest_term_Yk}
        I_{\ell, \nu}(\xi):=\left\lvert \int_{\mathbb R^d} (1+\lvert x \rvert^2)^{-\ell-\nu-1} (1-\lvert x \rvert^2)^\ell e^{-ix\cdot \xi}\, \d x\right\rvert\lesssim_{\ell, \nu, m} (1+\lvert\xi\rvert)^{-m}.
    \end{equation}
    By radiality, we can assume  $\xi=(\xi_1 , 0, \ldots, 0)$. Integration by parts then yields
    \begin{equation*}
        \begin{split}
            I_{\ell, \nu}(\xi)&=\left\lvert \int_{\mathbb R^d} (1+\lvert x \rvert^2)^{-\ell-\nu-1} (1-\lvert x \rvert^2)^\ell \frac{\partial^m}{\partial x_1^m}\left( \frac{e^{-ix_1\xi_1}}{\xi_1^m}\right)\, \d x\right\rvert \\ 
            &\le\frac{1}{\lvert \xi_1\rvert^m} \int_{\mathbb R^d} \left\lvert \frac{\partial^m}{\partial x_1^m}\left[ (1+\lvert x \rvert^2)^{-\ell-\nu-1}(1-\lvert x \rvert^2)^\ell\right]\right\rvert\, \d x.
        \end{split}
    \end{equation*}
    The latter integral is finite since 
    \begin{equation*}
        \begin{split}
            &\left\lvert \frac{\partial^m}{\partial x_1^m} \left[ (1+\lvert x \rvert^2)^{-\ell-\nu-1}(1-\lvert x \rvert^2)^\ell\right]\right\rvert \\ &\le \sum_{j=0}^m\binom{m}{j} \Big\lvert\frac{\partial^{m-j}}{\partial x_1^{m-j}} \left( 1+\lvert x \rvert^2\right)^{-\ell-\nu-1}\frac{\partial^{j}}{\partial x_1^{j}} \left( 1-\lvert x \rvert^2\right)^{\ell}\Big\rvert 
            \lesssim_{\ell,m,\nu} (1+\lvert x \rvert)^{-d-1-m}.
        \end{split}
    \end{equation*}
    Estimate~\eqref{eq:bound_highest_term_Yk} follows since $I_{\ell,\nu}(0)\lesssim_{\ell,\nu} 1$, and the first condition in \eqref{eq:gadmissible} is thus established. The second condition in \eqref{eq:gadmissible} follows at once from \eqref{eq:intro_symmetry} (which is further discussed in \S\ref{apsec:tangent}), and this completes the proof of the proposition.
    \end{proof}
    From \eqref{eq:PenroseEulerLagrangeTwo}, \eqref{eq:dilated_Omega}, the first identity in~\eqref{eq:MYk} and $(1+X_0)^d\d x=\d\sigma$, we obtain 
\begin{equation}\label{eq:RHSPenrose}
        \mathrm{RHS}(\eqref{eq:EulerLagrange})\cong_{d,p}\left(k+\frac{d-1}{2}\right)\int_{\mathbb S^d} \left( \tfrac{2 (p-1)^2}{(p-1)^2(1+X_0)+1-X_0}\right)^\frac{d-1}{2} Y_k(X_0)\, \d\sigma.
    \end{equation}
    If $p=2$, then the latter integral  reduces to  $\int_{\mathbb S^d} Y_k\, \d\sigma$, which necessarily vanishes since $k> 0$. We analyze this integral for $p\ne 2$ in Propositions \ref{prop:signRHS} and \ref{prop:RHSAsymptotics} below.

\subsubsection{Left-hand side of~\eqref{eq:EulerLagrange}}
    For $p,q$ in the conjectured range of boundedness~\eqref{eq:RCrange}, define the exponent $\gamma_p$ as 
    \begin{equation}\label{eq:alpha_p_q_first}
         \gamma_p:=(d+1)\left(\frac{p'}2-1\right)\in (-1, \infty).
        \end{equation}
    By Lemma~\ref{lem:LHSPenroseAppendix} and the change of variable $s=\cos R$, the left-hand side of \eqref{eq:EulerLagrange} equals
    \begin{equation}\label{eq:LHSPenrose}
        \begin{split}
         \frac{\lvert \mathbb S^{d-1}\rvert}2 \int_{-\pi}^\pi \cos(kT)\int_{-1}^{1} Y_k(s) \lvert\cos T + s\rvert^{\gamma_p} (1-s^2)^{\frac{d-2}2}\, \d s \d T.
        \end{split}
    \end{equation}
    \begin{remark}[see~\cite{GN20}]\label{rem:SteinTomasStrichartz_case}
        If $p=2$, then  $\gamma_2=0$, and the integral in~\eqref{eq:LHSPenrose} reduces to 
        \begin{equation*}
            \int_{-\pi}^\pi\int_{-1}^1 \cos(kT)Y_k(s)(1-s^2)^\frac{d-2}{2}\, \d s \d T, 
        \end{equation*}
        which clearly vanishes for every $k> 0$. In this case, we already observed in the line after~\eqref{eq:RHSPenrose} that the right-hand side of the Euler--Lagrange equation~\eqref{eq:EulerLagrange} vanishes as well. Thus~\eqref{eq:EulerLagrange} holds for every $k\geq 0$ (the case $k=0$ being immediate). The same analysis applies to a {\it general} (i.e., not necessarily zonal) spherical harmonic $Y_k=Y_k(X)$, in which case the left- and right-hand sides of~\eqref{eq:EulerLagrange} read as follows:
        \begin{equation*}
            \begin{array}{cc}   
                \int_{-\pi}^\pi\int_{\mathbb S^d} \cos(kT)Y_k(X)\, \d \sigma\d T, & \left(k+\frac{d-1}{2}\right)\int_{\mathbb S^d} \left( \tfrac{2 (p-1)^2}{(p-1)^2(1+X_0)+1-X_0}\right)^\frac{d-1}{2} Y_k(X)\, \d\sigma,
            \end{array}
        \end{equation*}
        up to irrelevant positive constants. Thus, in the case $p=2$, equation~\eqref{eq:EulerLagrange} holds when $g$ is the Penrose transform of an arbitrary spherical harmonic. In this case, the condition $\widehat{g}\in L^2(\mathbb R^d, \lvert\xi\rvert\, \d\xi)$ reduces to $g\in \dot{H}^{1/2}(\mathbb R^d)$, where $\dot H^{1/2}$ denotes the usual homogeneous Sobolev space. By~\cite[Theorem~3.3]{GN20}, the Penrose transform extends to a surjective isometry $H^{1/2}(\mathbb S^d)\to \dot{H}^{1/2}(\mathbb R^d)$, and spherical harmonics form a complete orthonormal system of the former. Thus, by linearity and density, the Euler--Lagrange equation holds when $p=2$ for every admissible $g$. This concludes the brief analysis of the Strichartz case.
    \end{remark} 
We proceed to analyze the case $p\neq 2$. Changing variables $t=-\cos T$ in~\eqref{eq:LHSPenrose}, the left-hand side of \eqref{eq:EulerLagrange} with $g=g_k$, which we denote by LHS$(\eqref{eq:EulerLagrange}, k, p)$, is seen to equal
\begin{equation}\label{eq:LHSsubcritical}\notag
    \begin{split}
        (-1)^k\lvert\mathbb S^{d-1}\rvert\int_{-1}^1\int_{-1}^1 T_k(t)(1-t^2)^{-\frac12}Y_k(s)(1-s^2)^\frac{d-2}{2} \lvert t-s\rvert^{\gamma_p}\, \d s \d t;
    \end{split}
\end{equation}
here $T_k(t):=\cos(k\arccos t)$ denotes the  Chebyshev polynomial of the first kind of degree $k$, which satisfies $T_k(-t)=(-1)^k T_k(t)$. Given $\alpha>-\frac12$,  define the functions
\begin{equation}\label{eq:f_nu}
    h_k^\alpha(t):=
    \begin{cases} 
        C_k^\alpha(t)(1-t^2)^{\alpha-\frac12}\mathbf1_{\lvert t\rvert\le 1},& \text{ if } \alpha\neq 0, \\ 
        T_k(t)(1-t^2)^{-\frac12}\mathbf1_{\lvert t \rvert \le 1}, & \text{ if } \alpha=0,
    \end{cases}    
\end{equation}
where $C_k^\alpha$ denotes the Gegenbauer polynomial of degree $k$, defined in terms of its generating function by
\begin{equation}\label{eq:Gegenbauer}
    (1-2rt +r^2)^{-\alpha}=\sum_{k=0}^\infty C_k^\alpha(t)r^k.
\end{equation}
The Gegenbauer polynomials $\{C_k^\alpha(t)\}_{k=0}^\infty$ are orthogonal in the interval $[-1, 1]$ with respect to the measure $(1-t^2)^{\alpha-1/2} \d t$, and $T_k=\frac{k}{2}C_k^0$. Henceforth we abuse notation slightly by letting 
\begin{equation}\label{eq:Yk_is_Ck} \notag
    \begin{array}{cc}
        Y_k=C_k^\nu, &  \text{with } \nu=\frac{d-1}{2}.
    \end{array}
\end{equation}
We then have that
\begin{equation}\label{eq:LHSsubcritical_convolution}
   \mathrm{LHS}(\eqref{eq:EulerLagrange}, k, p)= (-1)^k\lvert\mathbb S^{d-1}\rvert\int_{-\infty}^\infty h_k^0(t) (h_k^\nu\ast \lvert\cdot\rvert^{\gamma_p})(t)\, \d t,
\end{equation}
and the latter integral can be computed in terms of Bessel functions. With this purpose in mind, we introduce the following quantities: 
\begin{equation}\label{eq:Rodrigues_Homogeneous}
    \begin{split}     R_k^\alpha&:=\frac{\Gamma(\alpha+\frac12)\Gamma(k+2\alpha)}{2^k k!\Gamma(2\alpha)\Gamma(\alpha+k+\frac12)}, \text{ if } \alpha>-\tfrac12, \,\alpha\neq 0,\\      R_k^0&:=\frac{\sqrt{\pi}}{2^k\Gamma(k+\frac12)},\quad\quad\,\,\,
H_\gamma:=\frac{2^{\frac{\gamma+1}{2}}}{\pi^\frac\gamma2} \frac{\Gamma(\frac{\gamma+1}{2})}{ \Gamma(-\frac{\gamma}{2})}, \text{ if } \gamma>-1.
    \end{split}
\end{equation}
Note that $H_\gamma=0$ whenever $\gamma$ is a nonnegative even integer since the reciprocal Gamma function $\frac1{\Gamma}$ is entire and vanishes on $\{0,-1,-2,\ldots\}$.
\begin{lemma}\label{lem:finite_fourier_transforms}
    Let $\alpha>-\frac12$.
    The Fourier transform of the function $h_k^\alpha$ defined in \eqref{eq:f_nu} is given by
    \begin{equation}\label{eq:precise_finite_fourier_transform}\notag
        \widehat{h^\alpha_k}(\tau)
        =2^{\alpha+k} \Gamma(\alpha+k+\tfrac12)\sqrt{\pi}R_k^\alpha (-i\tau)^k \frac{J_{\alpha+k}(\lvert\tau\rvert)}{\lvert\tau\rvert^{\alpha+k}}.
    \end{equation}
\end{lemma}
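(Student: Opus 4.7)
The plan is to express $h_k^\alpha$ as a $k$-th distributional derivative of the single function $(1-t^2)^{k+\alpha-1/2}\mathbf 1_{|t|\le 1}$, so that the Fourier transform of $h_k^\alpha$ becomes multiplication by $(i\tau)^k$ applied to a classical Poisson--Bessel integral. First I would invoke the Rodrigues-type formula for Gegenbauer polynomials,
\begin{equation*}
C_k^\alpha(t)(1-t^2)^{\alpha-1/2}=(-1)^k R_k^\alpha \frac{d^k}{dt^k}\bigl[(1-t^2)^{k+\alpha-1/2}\bigr],\qquad \alpha>-\tfrac12,\ \alpha\ne 0,
\end{equation*}
where $R_k^\alpha$ is exactly the constant in \eqref{eq:Rodrigues_Homogeneous}; the Chebyshev counterpart $T_k(t)(1-t^2)^{-1/2}=(-1)^k R_k^0\,\partial_t^k[(1-t^2)^{k-1/2}]$ handles the case $\alpha=0$ with the matching constant $R_k^0$. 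Since $(1-t^2)^{k+\alpha-1/2}$ vanishes to positive order at $t=\pm 1$ together with its first $k-1$ derivatives, the identity is valid on all of $\mathbb R$ as a tempered distribution, with no boundary contributions.

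Next I would apply the Fourier differentiation rule $\widehat{\partial_t^k f}(\tau)=(i\tau)^k\hat f(\tau)$, which immediately gives
\begin{equation*}
\widehat{h_k^\alpha}(\tau)=R_k^\alpha (-i\tau)^k \int_{-1}^{1}(1-t^2)^{k+\alpha-1/2}e^{-it\tau}\,dt.
\end{equation*}
The remaining integral is a classical Poisson-type representation of the Bessel function,
\begin{equation*}
\int_{-1}^{1}(1-t^2)^{\beta-1/2}e^{-it\tau}\,dt=\sqrt{\pi}\,\Gamma\bigl(\beta+\tfrac12\bigr)\Bigl(\tfrac{2}{|\tau|}\Bigr)^{\beta}J_\beta(|\tau|),\qquad \beta>-\tfrac12,
\end{equation*}
which can be verified either by expanding $e^{-it\tau}$ as a power series and matching with the Taylor expansion of $J_\beta$, or by analytic continuation in $\beta$ from the Poisson integral representation of $J_\beta$. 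Inserting $\beta=k+\alpha$ and multiplying by the prefactor from the previous display produces precisely the expression in the statement.

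I do not anticipate a serious obstacle: the lemma is an assembly of two classical identities, and the entire content lies in bookkeeping constants. The one delicate point is the case $\alpha=0$, where the Gegenbauer expression degenerates through the factor $\Gamma(2\alpha)$ appearing in $R_k^\alpha$; here one must instead use the Chebyshev Rodrigues formula together with the relation $\lim_{\alpha\to 0}\alpha^{-1}C_k^\alpha(t)=\tfrac{2}{k}T_k(t)$ to check that $R_k^0$ in \eqref{eq:Rodrigues_Homogeneous} is the correct continuous extension of $R_k^\alpha$. Once this is in place, the two displays above apply uniformly in $\alpha>-1/2$ and yield the claimed formula.
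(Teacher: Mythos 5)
Your proposal is correct and follows essentially the same route as the paper: the Rodrigues formula (with its Chebyshev analogue for $\alpha=0$), followed by $k$-fold integration by parts — which you phrase equivalently as the distributional derivative identity plus the Fourier multiplication rule — and the Poisson integral representation of $J_{\alpha+k}$. The constants and signs all check out, so no further comment is needed.
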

\begin{proof} 
     The formula of Rodrigues \cite[p.~22, Lemma 4]{Mu98} states that, for $\alpha\ne 0$,
    \begin{equation}\label{eq:general_rodrigues}\notag
    C_k^\alpha(t)=\frac{(-1)^kR_k^\alpha}{(1-t^2)^{\alpha-\frac12}}\frac{\d^k}{\d t^k}\left( (1-t^2)^{k+\alpha-\frac12}\right).
    \end{equation}
    If $\alpha=0$, then we instead have that
    \begin{equation}\label{eq:Chebyshev_rodrigues}\notag
          T_k(t)=\frac{(-1)^kR_k^0}{(1-t^2)^{-\frac12}}\frac{\d^k}{\d t^k}\left( (1-t^2)^{k-\frac12}\right).
    \end{equation}
By the Poisson representation of Bessel functions \cite[Ch.~II, 2.3(3)]{Wa44},
     \begin{equation}\label{eq:Poisson_repr}\notag
        \frac{J_{\alpha+k}(\lvert \tau\rvert)}{\lvert \tau \rvert^{\alpha+k}}=\frac{1}{2^{\alpha+k}\Gamma(\alpha+k+\frac12)\sqrt{\pi}} \int_{-1}^1 (1-t^2)^{k+\alpha-\frac12}e^{-it\tau}\, \d t
     \end{equation}
     for $\tau\in \mathbb R$, as long as $ \Re(\alpha+k)>-\frac12$. Partial integration then yields
     \begin{equation}\label{eq:final_poisson_comp}\notag
        \begin{split}
           \widehat{h^\alpha_k}(\tau)= \int_{-\infty}^\infty h_k^\alpha(t)e^{-it\tau}\, \d t &= (-1)^kR_k^\alpha\int_{-1}^1\frac{\d^k}{\d t^k}\left( (1-t^2)^{k+\alpha-\frac12}\right) e^{-it\tau}\,\d t \\ 
            &=R_k^\alpha(-i\tau)^k\int_{-1}^1(1-t^2)^{k+\alpha-\frac12}e^{-it\tau}\, \d t \\ &=2^{\alpha+k} \Gamma(\alpha+k+\frac12)\sqrt{\pi}R_k^\alpha (-i\tau)^k \frac{J_{\alpha+k}(\lvert\tau\rvert)}{\lvert\tau\rvert^{\alpha+k}},        
        \end{split}
    \end{equation}
    as desired.
    This concludes the proof of the lemma.
\end{proof}
For $-1<\gamma\notin 2\mathbb N_0$, we will also use the Fourier transform
\begin{equation}\label{eq:FT_Homogeneous}
    \int_{-\infty}^\infty \widehat{\lvert\cdot\rvert^\gamma}(\tau) \varphi(\tau)\, \d \tau:=\int_{-\infty}^\infty |t|^\gamma \widehat{\varphi}(t) \d t = H_\gamma
    \int_{-\infty}^\infty |\tau|^{-1-\gamma} \varphi(\tau) \d \tau,
\end{equation}
the expressions being valid for any Schwartz function $\varphi$ such that $\lvert\cdot\rvert^{-1-\gamma}\varphi\in L^1(\mathbb R)$. Note that the sign of the quantity $H_\gamma\cong_\gamma \Gamma(\frac{\gamma+1}2)/\Gamma(\frac{-\gamma}2)$ defined in \eqref{eq:Rodrigues_Homogeneous} equals $(-1)^{\lfloor\frac{\gamma}2\rfloor+1}$ whenever $0<\gamma\notin2\mathbb N$.
From \eqref{eq:LHSsubcritical_convolution}, Plancherel's identity and \eqref{eq:FT_Homogeneous}, we then conclude that
\begin{equation}\label{eq:LHSafterPenrose}
    \begin{split}
        \textup{LHS}(\eqref{eq:EulerLagrange}, k, p)&\cong_d (-1)^k \int_{-\infty}^\infty \overline{\widehat{h_k^0}(\tau)}\widehat{h_k^\nu}(\tau)\widehat{\lvert\cdot\rvert^{\gamma_p}}(\tau)\, \d\tau \\ 
        &\cong_d(-1)^kH_{\gamma_p} \frac{\Gamma(k+2\nu)}{k!}\int_0^\infty \, \frac{(J_k J_{\nu+k})(\tau)}{\tau^{1+{\gamma_p}+\nu}}\,\d\tau, \\
    \end{split}
\end{equation}
   whenever $\gamma_p\neq 0$ is not a positive even integer.
   
\subsection{The subcritical case}\label{sec:s}
We begin by determining the sign of the right-hand side of the Euler--Lagrange equation, which we computed in~\eqref{eq:RHSPenrose} and henceforth denote by $\mathrm{RHS}(\eqref{eq:EulerLagrange},k, p)$.
\begin{proposition}\label{prop:signRHS}
    Let $k\ge 2$ be an integer. Then $\mathrm{RHS}(\eqref{eq:EulerLagrange},k, p)$ is positive for all $k$ if $1<p<2$, and has the sign $(-1)^k$ if $2<p<2d/(d-1)$. 
\end{proposition}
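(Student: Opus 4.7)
The plan is to realize the right-hand side of \eqref{eq:RHSPenrose} as, up to a positive constant, a single Gegenbauer coefficient of an explicit zonal function, and then to compute that coefficient in closed form by means of the generating function identity \eqref{eq:Gegenbauer}. The sign will then be transparent. Throughout I set $\nu := (d-1)/2$, so that $Y_k = C_k^\nu$.

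First I would pass from $\mathbb S^d$ to $[-1,1]$ via the zonal reduction $\int_{\mathbb S^d}F(X_0)\,d\sigma = |\mathbb S^{d-1}|\int_{-1}^1 F(s)(1-s^2)^{\nu-1/2}\,ds$, and verify that the implicit constant in \eqref{eq:RHSPenrose} is positive by auditing the constants in \eqref{eq:PenroseEulerLagrangeTwo}--\eqref{eq:RHSPenrose}. Combined with positivity of $k + (d-1)/2$, this reduces the task to pinning down the sign of
\[I_k := \int_{-1}^1 \Phi(s)\,C_k^\nu(s)\,(1-s^2)^{\nu - 1/2}\, ds, \qquad \Phi(s) := \left(\frac{2(p-1)^2}{(p-1)^2(1+s) + 1 - s}\right)^\nu.\]

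The key algebraic step is to rewrite the linear polynomial in the denominator of $\Phi$ in the generating-function form $(p^2/2)(1 - 2rs + r^2)$ for a suitable $r$. Matching the constant and linear coefficients in $s$ produces the quadratic $br^2 + 2ar + b = 0$ with $a := (p-1)^2 + 1$ and $b := (p-1)^2 - 1$, whose roots are reciprocals of each other. The root lying in $(-1,1)$ is
\[r = \frac{2-p}{p},\]
which is positive when $1 < p < 2$ and negative when $2 < p < 2d/(d-1)$, and satisfies $|r| < 1$ throughout the relevant range. Hence $\Phi$ equals a positive constant times $(1 - 2rs + r^2)^{-\nu}$.

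The conclusion is then a one-line consequence of \eqref{eq:Gegenbauer} together with the orthogonality of $\{C_k^\nu\}$ on $[-1,1]$ with respect to $(1-s^2)^{\nu - 1/2}\,ds$: expanding $(1 - 2rs + r^2)^{-\nu} = \sum_j C_j^\nu(s)\,r^j$ (convergent since $|r| < 1$) and integrating against $C_k^\nu(s)(1-s^2)^{\nu-1/2}\,ds$, only the $j = k$ term survives, and $I_k$ equals a positive constant times $r^k\|C_k^\nu\|^2_{L^2((1-s^2)^{\nu-1/2}ds)}$. The sign is therefore that of $r^k$, which gives the proposition. The only subtle points are the algebraic identification with the generating-function form and the sign bookkeeping in the derivation of \eqref{eq:RHSPenrose}; both are routine.
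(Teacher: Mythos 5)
Your proof is correct, but it follows a genuinely different route from the paper. The paper proves Proposition~\ref{prop:signRHS} by applying the Rodrigues-type identity of Lemma~\ref{lem:Rodrigues} to move $k$ derivatives onto $(a_pt+b_p)^{\frac{1-d}{2}}$, then expanding in a binomial series and reading off the sign from parity considerations; it only ever extracts the sign. You instead observe that the zonal profile $(b_p+a_ps)^{-\nu}$ is exactly $\bigl(\tfrac{p^2}{2}\bigr)^{-\nu}(1-2rs+r^2)^{-\nu}$ with $r=\tfrac{2-p}{p}\in(-1,1)$ (your quadratic $a_pr^2+2b_pr+a_p=0$ and the choice of the root of modulus less than one check out, since $b_p^2-a_p^2=4(p-1)^2$), so that by the generating function \eqref{eq:Gegenbauer} and orthogonality the integral in \eqref{eq:RHSPenrose} equals a positive constant times $r^k\lVert C_k^\nu\rVert_{L^2(w_\nu)}^2$; the sign is then immediate, and you in fact get a closed form rather than just a sign. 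Two routine points you should make explicit if you write this up: the term-by-term integration of the Gegenbauer series is justified by uniform convergence on $[-1,1]$ for fixed $\lvert r\rvert<1$ (e.g.\ via $\lvert C_j^\nu(s)\rvert\le C_j^\nu(1)\lesssim_\nu j^{2\nu-1}$), and the positivity of the implicit constants in \eqref{eq:PenroseEulerLagrangeTwo}--\eqref{eq:RHSPenrose}, a caveat the paper's own proof shares. What your approach buys is efficiency and extra information: since $\lvert r\rvert=\lvert 2-p\rvert/p<\tfrac12$ throughout the supercritical range, your exact formula also delivers (indeed slightly sharpens) the geometric decay in $k$ that the paper obtains separately in Proposition~\ref{prop:RHSAsymptotics} via Lemma~\ref{lem:Wang}, so the two supercritical ingredients on the right-hand side collapse into one computation.
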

\noindent    The proof of Proposition \ref{prop:signRHS} relies on a particular consequence of the formula of Rodrigues which can be found in \cite[p.~23]{Mu98}. 
    \begin{lemma}[\cite{Mu98}]\label{lem:Rodrigues}
        Let $\varphi:[-1, 1]\to\R$ be $k$ times continuously differentiable, and let $Y_k=Y_k(X_0)$ denote a zonal spherical harmonic of degree $k$ on $\mathbb S^d$. Then
    \begin{equation}\label{eq:Rodrigues_conseq}\notag
        \int_{-1}^1 \varphi(t)\frac{Y_k(t)}{Y_k(1)}(1-t^2)^\frac{d-2}{2}\, \d t = \frac1{2^k} \frac{\Gamma(\tfrac d 2)}{\Gamma(k+\tfrac d 2)} \int_{-1}^1  \frac{\d^k \varphi}{\d t^k}(t)(1-t^2)^{k+\frac{d-2}{2}}\, \d t.
    \end{equation}
    \end{lemma}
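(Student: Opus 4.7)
The statement is a standard identity of Gegenbauer analysis, so my plan is to follow the classical route: express the zonal spherical harmonic as a Gegenbauer polynomial, apply the Rodrigues formula (already recalled in the proof of Lemma \ref{lem:finite_fourier_transforms}) to rewrite $Y_k(t)(1-t^2)^{(d-2)/2}$ as a $k$-th derivative, and then integrate by parts $k$ times.

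More precisely, I would set $\nu=(d-1)/2$ so that the zonal spherical harmonic of degree $k$ on $\mathbb S^d$ is (up to a multiplicative constant) $C_k^\nu$. Note that $\alpha-\tfrac12=(d-2)/2$ when $\alpha=\nu$, which is exactly the exponent in the left-hand side of the lemma. Consequently, Rodrigues' formula yields
\begin{equation*}
C_k^\nu(t)\,(1-t^2)^{\frac{d-2}{2}} = (-1)^k R_k^\nu \,\frac{\d^k}{\d t^k}\!\left((1-t^2)^{k+\frac{d-2}{2}}\right),
\end{equation*}
with $R_k^\nu$ as in \eqref{eq:Rodrigues_Homogeneous}. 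Substituting into the left-hand side of the claimed identity and integrating by parts $k$ times, the boundary contributions all vanish because, for every $0\leq j\leq k-1$, the $j$-th derivative of $(1-t^2)^{k+(d-2)/2}$ still contains a positive power of $(1-t^2)$ at $t=\pm 1$ (the remaining exponent is at least $(d-2)/2+1>0$ since $d\geq 1$). Hence
\begin{equation*}
\int_{-1}^1 \varphi(t)\,C_k^\nu(t)\,(1-t^2)^{\frac{d-2}{2}}\,\d t
= R_k^\nu\int_{-1}^1 \frac{\d^k\varphi}{\d t^k}(t)\,(1-t^2)^{k+\frac{d-2}{2}}\,\d t.
\end{equation*}

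To finish, I would divide both sides by $Y_k(1)=C_k^\nu(1)$ and simplify the constant. Using the classical value $C_k^\nu(1)=\Gamma(k+2\nu)/(k!\,\Gamma(2\nu))$ together with the formula for $R_k^\nu$ in \eqref{eq:Rodrigues_Homogeneous}, the ratio telescopes to
\begin{equation*}
\frac{R_k^\nu}{C_k^\nu(1)}
= \frac{\Gamma(\nu+\tfrac12)}{2^k\,\Gamma(\nu+k+\tfrac12)}
= \frac{1}{2^k}\,\frac{\Gamma(d/2)}{\Gamma(k+d/2)},
\end{equation*}
since $\nu+\tfrac12=d/2$. This is precisely the prefactor in \eqref{eq:Rodrigues_conseq}.

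There is no substantial obstacle here: the only points requiring care are the bookkeeping of the exponent $\alpha-\tfrac12=(d-2)/2$ that makes the weight in the integral compatible with the Rodrigues representation, the justification that all $k$ boundary terms in the successive integrations by parts vanish (which reduces to the inequality $(d-2)/2+1>0$), and the clean identification of the final constant via $C_k^\nu(1)$. The hypothesis that $\varphi$ is $k$ times continuously differentiable on $[-1,1]$ is exactly what is needed for the repeated integration by parts to be rigorous.
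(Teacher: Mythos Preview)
Your proof is correct and follows the standard classical route: Rodrigues' formula for $C_k^\nu$ followed by $k$ integrations by parts, with the constant identified via $C_k^\nu(1)=\Gamma(k+2\nu)/(k!\,\Gamma(2\nu))$. The paper does not supply its own proof of this lemma---it simply cites \cite[p.~23]{Mu98}---so there is nothing to compare against; your argument is precisely the derivation one finds in that reference.
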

    \begin{proof}[Proof of Proposition \ref{prop:signRHS}]
        Letting
    \begin{equation}\label{eq:a_and_b_coeff}
        a_p:=(p-1)^2-1,\,\,\, b_p:=(p-1)^2+1
    \end{equation}
    and applying Lemma \ref{lem:Rodrigues}, we see from~\eqref{eq:RHSPenrose} that
    \begin{equation}\label{eq:RHSPenroseCont}\notag
        \mathrm{RHS}(\eqref{eq:EulerLagrange},k, p)\cong_{d,p,k} \int_{-1}^1
        \frac{\d^{k}}{\d t^{k}}(a_p t+b_p)^{\frac{1-d}{2}}
        (1-t^2)^{k+\frac{d-2}{2}}\, \d t.
    \end{equation}
    In the admissible range of $p$, note that $a_p=0$ if and only if $p=2$; in that case, the integral vanishes as we have already observed. 
    Since $b_p>\lvert a_p \vert $, binomial expansion reveals that
    \begin{equation*}
    \mathrm{RHS}(\eqref{eq:EulerLagrange},k, p)\cong_{d,p,k}b_p^{\frac{1-d}{2}} \sum_{m=k}^\infty\! \binom{\frac{1-d}{2}}{m}\!
    \prod_{\ell=0}^{k-1}(m-\ell)
    \left(\frac{a_p}{b_p}\right)^m\!\!\! \int_{-1}^1 t^{m-k}(1-t^2)^{k+\frac{d-2}{2}}\, \d t.
    \end{equation*}
    By parity considerations, only the terms with even $m-k$ yield nonzero integrals. The corresponding binomial coefficient has the same sign $(-1)^m=(-1)^k$. The term $(a_p/b_p)^m$ has the sign $(-1)^k$ if $a_p<0$, i.e., if $1<p<2$; and it is positive if $a_p>0$, i.e., if $2<p<2d/(d-1)$. The result follows.
    \end{proof}

    We proceed to analyze the left-hand side of the Euler--Lagrange equation, and rely on the following result which is a particular case of~\cite[Ch.~XIII, 13.41(2)]{Wa44}. 
\begin{lemma}[\cite{Wa44}]\label{lem:Watson}
    For $\mu, \nu, \lambda\in \mathbb C$ such that $\Re(\mu+\nu+1)>\Re(\lambda)>0$, 
    \begin{equation}\label{eq:WatsonIntegral}\notag
        \int_0^\infty \,\frac{(J_\mu J_\nu)(\tau)}{\tau^\lambda}\,\d \tau=\frac{\Gamma(\lambda)}{2^\lambda}\frac{\Gamma(\frac{\mu+\nu-\lambda+1}{2})}{\Gamma(\frac{\lambda+\mu+\nu+1}{2})\Gamma(\frac{\lambda+\nu-\mu+1}{2})\Gamma(\frac{\lambda+\mu-\nu+1}{2})}.
    \end{equation}
\end{lemma}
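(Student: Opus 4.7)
The plan is to reduce the identity to Gauss's summation theorem
\[{}_2F_1(a,b;c;1)=\frac{\Gamma(c)\Gamma(c-a-b)}{\Gamma(c-a)\Gamma(c-b)},\qquad \Re(c-a-b)>0,\]
by recognizing the integral on the left as a multiple of a ${}_2F_1$ evaluated at $z=1$. The two ingredients are the Taylor expansion
\[J_\nu(\tau)=\sum_{k=0}^\infty \frac{(-1)^k}{k!\,\Gamma(k+\nu+1)}\!\left(\frac{\tau}{2}\right)^{\nu+2k},\]
together with the classical Mellin transform of a single Bessel function,
\[\int_0^\infty J_\mu(\tau)\,\tau^{s-1}\,\d\tau=\frac{2^{s-1}\Gamma(\tfrac{\mu+s}{2})}{\Gamma(\tfrac{\mu-s+2}{2})},\]
viewed as a meromorphic identity in $s$.

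Substituting the series for $J_\nu$, exchanging sum and integral, and invoking the Mellin formula with $s=\nu-\lambda+2k+1$ yields a series whose $k$-th term involves $\Gamma(\tfrac{\mu+\nu-\lambda+1}{2}+k)$ and $\Gamma(\tfrac{\mu-\nu+\lambda+1}{2}-k)^{-1}$. Applying the reflection identity
\[\frac{1}{\Gamma(a-k)}=\frac{(-1)^k(1-a)(2-a)\cdots(k-a)}{\Gamma(a)}\]
to the latter, and writing $\Gamma(k+\nu+1)=\Gamma(\nu+1)(\nu+1)(\nu+2)\cdots(\nu+k)$, the series reorganizes as a constant multiple of ${}_2F_1(\tfrac{\mu+\nu-\lambda+1}{2},\tfrac{\nu-\mu-\lambda+1}{2};\nu+1;1)$. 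Here the difference $c-a-b$ equals exactly $\lambda$, so the hypothesis $\Re\lambda>0$ is precisely what licenses Gauss's theorem; the hypergeometric collapses to a ratio of four gamma functions, and a final pass of cancellation produces the stated closed form.

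The main obstacle is the analytic justification of the termwise integration: the Mellin identity above converges as a Lebesgue integral only in the strip $-\Re\mu<\Re s<3/2$, whereas $s=\nu-\lambda+2k+1$ escapes that strip for large $k$. The standard remedy is a regularization — insert $e^{-\varepsilon\tau}$, apply Fubini legitimately, and send $\varepsilon\to 0^+$, using that the original double integral is absolutely convergent under the hypothesis $\Re(\mu+\nu+1)>\Re\lambda$ (the Bessels behave like $\tau^\mu$, $\tau^\nu$ near zero and decay like $\tau^{-1/2}$ at infinity). An equivalent route uses the Mellin--Parseval identity to write the integral directly as a Barnes contour integral; closing the contour and summing residues of $\Gamma(\tfrac{\mu+s}{2})$ at $s=-\mu-2k$ reproduces the same hypergeometric series without any interchange of limits.
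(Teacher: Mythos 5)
The paper offers no proof of this lemma: it is quoted as a special case of \cite[Ch.~XIII, 13.41(2)]{Wa44}, the Weber--Schafheitlin integral with equal arguments, so there is no internal argument to compare with; what you have done is reconstruct the classical derivation. Your formal computation is correct and is essentially the classical one: expanding $J_\nu$, inserting the Mellin evaluation of $\int_0^\infty J_\mu(\tau)\tau^{s-1}\,\d\tau$ at $s=\nu-\lambda+2k+1$, and using $1/\Gamma(a-k)=(-1)^k(1-a)(2-a)\cdots(k-a)/\Gamma(a)$ does reorganize the sum into a constant multiple of ${}_2F_1\bigl(\tfrac{\mu+\nu-\lambda+1}{2},\tfrac{\nu-\mu-\lambda+1}{2};\nu+1;1\bigr)$ with $c-a-b=\lambda$, so Gauss's theorem applies exactly under $\Re\lambda>0$ and yields the stated Gamma ratio; you also correctly identify that $\Re(\mu+\nu+1)>\Re\lambda$ governs integrability at the origin while $\Re\lambda>0$ governs integrability at infinity.

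The soft spot is your first justification of the termwise integration. Inserting $e^{-\varepsilon\tau}$ does not make the interchange an honest application of Fubini: summing the absolute values of the terms reproduces, up to weights, the modified Bessel function $I_\nu(\tau)\simeq e^{\tau}\tau^{-1/2}$, which defeats $e^{-\varepsilon\tau}$ for every $\varepsilon<1$; moreover, with the regularizer present each termwise integral is no longer the clean Gamma-ratio Mellin value but a hypergeometric function of $\varepsilon$, so you would still need to interchange $\varepsilon\to0^+$ with the infinite sum --- exactly the kind of limit interchange you set out to avoid. The classical repair is to prove the formula first for distinct arguments, where the series is a ${}_2F_1$ at $(b/a)^2<1$ and termwise integration can be justified, and then let $b\uparrow a$ via Abel's theorem, using $\Re(c-a-b)=\Re\lambda>0$ for boundary convergence and dominated convergence for the left-hand side. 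Your second route --- Mellin--Parseval followed by a Barnes contour shift collecting the residues of $\Gamma(\tfrac{\mu+s}{2})$ at $s=-\mu-2k$ --- is sound and is the cleanest complete argument; the remaining routine points are the existence of a common vertical strip for the two Mellin transforms (guaranteed by your hypotheses), the $O(|\Im s|^{-1-\Re\lambda})$ decay on vertical lines, and the standard justification of closing the contour. As a standalone proof you should commit to that version (or the Abel-limit version) rather than the $e^{-\varepsilon\tau}$ shortcut; for the paper's purposes the citation alone suffices.
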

\noindent    Before stating our next result, recall the definition \eqref{eq:alpha_p_q_first} of $\gamma_p$. 
\begin{proposition}\label{prop:main_subcritical}
    Let $k\ge 2$ be an integer. Then $\textup{LHS}(\eqref{eq:EulerLagrange}, k, p)<0$ if  $\gamma_p\in (2k-4, 2k-2)$ and $\textup{LHS}(\eqref{eq:EulerLagrange}, k, p)=0$ if $\gamma_p\in \{2k-4, 2k-2\}$.
\end{proposition}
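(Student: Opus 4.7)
The strategy is to apply Watson's formula (Lemma \ref{lem:Watson}) to the Bessel integral that appears in \eqref{eq:LHSafterPenrose}, reducing $\textup{LHS}(\eqref{eq:EulerLagrange},k,p)$ to an explicit product of Gamma functions whose sign can be tracked directly; the endpoint cases $\gamma_p\in\{2k-4,2k-2\}$ are handled separately via polynomial orthogonality, bypassing the vanishing prefactor $H_{\gamma_p}$.

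First I apply Lemma \ref{lem:Watson} with Bessel orders $k$ and $\nu+k$ and exponent $\lambda=1+\gamma_p+\nu$, where $\nu=(d-1)/2$. Its hypothesis reduces to $2k>\gamma_p>-\nu$, which holds on the entire interval $(2k-4,2k-2)$ since $k\geq 2$ and $\gamma_p>0$ there. This gives
\begin{equation*}
\int_0^\infty \frac{(J_k J_{\nu+k})(\tau)}{\tau^{1+\gamma_p+\nu}}\,\d\tau = \frac{\Gamma(1+\gamma_p+\nu)}{2^{1+\gamma_p+\nu}}\cdot\frac{\Gamma(k-\gamma_p/2)}{\Gamma(k+\nu+1+\gamma_p/2)\,\Gamma(\nu+1+\gamma_p/2)\,\Gamma(1+\gamma_p/2)}.
\end{equation*}

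Next I read off the sign from \eqref{eq:LHSafterPenrose}. For $\gamma_p\in(2k-4,2k-2)$, all four Gamma arguments above are positive; in particular the numerator argument $k-\gamma_p/2$ lies in $(1,2)$. Hence the Bessel integral is positive, and the nontrivial sign comes from the prefactor $H_{\gamma_p}\cong_{\gamma_p}\Gamma(\tfrac{\gamma_p+1}{2})/\Gamma(-\gamma_p/2)$. Since $\tfrac{\gamma_p+1}{2}>0$ on our interval, only $\Gamma(-\gamma_p/2)$ matters, and its argument $-\gamma_p/2$ lies in $(1-k,2-k)$, which is precisely the $k$-th strip of the negative real axis on which $\Gamma$ has sign $(-1)^{k-1}$. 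Combining with the factor $(-1)^k$ in \eqref{eq:LHSafterPenrose} produces the overall sign $(-1)^k\cdot(-1)^{k-1}=-1$, as claimed.

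Finally I address the endpoints $\gamma_p=2m$ for $m\in\{k-2,k-1\}$ directly from the double-integral form $(-1)^k\lvert\mathbb S^{d-1}\rvert\int_{-1}^1\!\int_{-1}^1 T_k(t)\,C_k^\nu(s)\,\lvert t-s\rvert^{\gamma_p}(1-t^2)^{-1/2}(1-s^2)^{(d-2)/2}\,\d s\,\d t$ of $\textup{LHS}$, which is valid for every $\gamma_p>-1$ and arises before the convolution rewriting in \eqref{eq:LHSsubcritical_convolution}. Expanding $(t-s)^{2m}$ binomially reduces this to a finite linear combination of products $\bigl(\int_{-1}^1 T_k(t)\,t^{2m-j}(1-t^2)^{-1/2}\,\d t\bigr)\bigl(\int_{-1}^1 C_k^\nu(s)\,s^j(1-s^2)^{\nu-1/2}\,\d s\bigr)$ with $0\le j\le 2m$. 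Chebyshev orthogonality kills the first factor unless $2m-j\ge k$, and Gegenbauer orthogonality kills the second unless $j\ge k$; these two inequalities together force $m\ge k$, contradicting $m\le k-1$, so every summand vanishes. The principal obstacle throughout is the sign bookkeeping in the third paragraph: the precise identification of the sign of $\Gamma$ on the strip $(1-k,2-k)$ as $(-1)^{k-1}$ is what makes the cancellation with $(-1)^k$ yield a strict negative result that is independent of the parity of $k$.
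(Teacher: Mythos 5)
Your proof is correct. The argument on the open interval $(2k-4,2k-2)$ is essentially the paper's: both start from \eqref{eq:LHSafterPenrose}, determine the sign of $H_{\gamma_p}$ (you via the sign of $\Gamma$ on the strip $(1-k,2-k)$, the paper via the stated rule $(-1)^{\lfloor\gamma_p/2\rfloor+1}$ — the same fact), and obtain strict positivity of the Bessel integral from Lemma \ref{lem:Watson}; you write out Watson's evaluation explicitly and check all four Gamma arguments, which is slightly more detailed than the paper's appeal to the triangular-triple condition but buys nothing new. (Minor slip: the lower constraint in Watson's hypothesis is $\gamma_p>-1-\nu$, not $\gamma_p>-\nu$; irrelevant here since $\gamma_p>0$.) Where you genuinely diverge is at the endpoints $\gamma_p\in\{2k-4,2k-2\}$: the paper argues by continuity of \eqref{eq:LHSPenrose} in $\gamma_p$, combined with finiteness of the Bessel integral at the endpoints and $H_\gamma\to 0$ at nonnegative even integers, whereas you evaluate the double integral directly by expanding $(t-s)^{2m}$ and invoking Chebyshev and Gegenbauer orthogonality, which forces $m\ge k$ for any surviving term. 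Your endpoint argument is more elementary and self-contained (no limiting argument and no reliance on the vanishing of $H_\gamma$), at the modest cost of a binomial bookkeeping step; the paper's version is shorter given that \eqref{eq:LHSafterPenrose} is already in hand. Both are valid.
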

\begin{proof}
    If $2k-4<\gamma_p< 2k-2$, then \eqref{eq:LHSafterPenrose} yields
    \begin{equation}\label{eq:LHSafterPenrose_subcritical}
        \mathrm{LHS}(\eqref{eq:EulerLagrange}, k, p)\cong_{k,d} (-1)^k H_{\gamma_p}\int_{0}^\infty  \, \frac{(J_kJ_{\nu+k})(\tau)}{\tau^{1+\gamma_p+\nu}}\,\d\tau,
    \end{equation}
    and in this case the sign of $H_{\gamma_p}$ is $(-1)^{\lfloor \frac{\gamma_p}{2}\rfloor+1}=(-1)^{k+1}$, as we noted immediately after \eqref{eq:FT_Homogeneous}. On the other hand, the positive numbers $(k, \nu+k, 1+\gamma_p+\nu)$ form a triangular triple, that is, the sum of any two terms is larger than the remaining term. Consequently, the integral in~\eqref{eq:LHSafterPenrose_subcritical} is nonnegative in light of Lemma~\ref{lem:Watson}, and this establishes the first claim. 
    If $\gamma\in\{2k-4,2k-2\}$, then the integral on the right-hand side of \eqref{eq:LHSafterPenrose_subcritical} continues to be finite. In that case, LHS$(\eqref{eq:EulerLagrange},k,p)$ is given by \eqref{eq:LHSPenrose}, which defines a continuous function of $\gamma_p$.
    Since $H_\gamma\to 0$ as $\gamma$ approaches any nonnegative even integer, taking the limits $\gamma_p\downarrow (2k-4)$ and $\gamma_p\uparrow (2k-2)$ in~\eqref{eq:LHSafterPenrose_subcritical} yields the second claim by continuity.
\end{proof}
\noindent By Proposition~\ref{prop:signRHS}, we know that RHS$(\eqref{eq:EulerLagrange}, k, p)>0$ for every $k\geq 2$ and  $1<p<2$. By Proposition~\ref{prop:main_subcritical}, we then conclude, for every $1<p<2$, that there exists $k\geq 2$ such that ~\eqref{eq:EulerLagrange} fails. This concludes the analysis of the subcritical case. 

\subsection{The supercritical case}\label{sec:S}
 We continue to consider zonal spherical harmonics $Y_k=C_k^\nu$,  the latter denoting the Gegenbauer polynomial of degree $k\geq 2$, and $\nu=\frac{d-1}2$. By  definition~\eqref{eq:a_and_b_coeff} of $a_p, b_p$,  identity~\eqref{eq:RHSPenrose} reads
    \begin{equation}\label{eq:RHSGegenbauer}\notag
        \mathrm{RHS}(\eqref{eq:EulerLagrange}, k, p)\cong_{d,p}(k+\nu)\int_{-1}^1 (a_p t + b_p)^{-\nu} C_k^\nu(t)(1-t^2)^{\nu-\frac12}\, \d t,
    \end{equation}
    which can be recognized as a certain coefficient in a Gegenbauer expansion, and estimated as follows.
    \begin{proposition}\label{prop:RHSAsymptotics}
        Let $2<p<\frac{2d}{d-1}$ and $k\geq 2$. Then
        \begin{equation}\label{eq:GegenbauerAsymptotics}
            \left\lvert \mathrm{RHS}(\eqref{eq:EulerLagrange}, k, p)\right\rvert \lesssim_{d,p}\left(\frac23\right)^{k}k^{\nu}.
        \end{equation}
    \end{proposition}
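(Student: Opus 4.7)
The natural approach is to exploit the Gegenbauer generating function \eqref{eq:Gegenbauer} in order to expand $(a_p t + b_p)^{-\nu}$ as a Gegenbauer series, and then apply orthogonality. We seek $\alpha > 0$ and $r$ with $|r| < 1$ such that $a_p t + b_p = \alpha(1 - 2rt + r^2)$. Matching coefficients yields $a_p r^2 + 2 b_p r + a_p = 0$, whose discriminant evaluates, via $b_p - a_p = 2$ and $b_p + a_p = 2(p-1)^2$, to $16(p-1)^2$. Since $a_p = p(p-2) > 0$ in the supercritical range, the root of modulus less than $1$ is
\[
r = \frac{-b_p + 2(p-1)}{a_p} = -\frac{(p-2)^2}{p(p-2)} = -\frac{p-2}{p}, \qquad \alpha = \frac{p^2}{2},
\]
so the generating function produces the uniformly convergent expansion
\[
(a_p t + b_p)^{-\nu} = \Bigl(\tfrac{2}{p^2}\Bigr)^{\nu} \sum_{k=0}^\infty C_k^\nu(t)\, r^k, \qquad t\in[-1,1].
\]

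The next step is to substitute this expansion into the integral representing $\mathrm{RHS}(\eqref{eq:EulerLagrange}, k, p)$, interchange the sum and the integral (justified by uniform convergence together with the elementary bound $|C_k^\nu(t)| \leq C_k^\nu(1) \lesssim_\nu k^{2\nu-1}$), and invoke the orthogonality of Gegenbauer polynomials against the weight $(1-t^2)^{\nu-1/2}$. The standard $L^2$ norm formula
\[
\int_{-1}^1 [C_k^\nu(t)]^2\,(1-t^2)^{\nu - \frac12}\,\d t = \frac{\pi\, 2^{1-2\nu}\,\Gamma(k+2\nu)}{k!\,(k+\nu)\,[\Gamma(\nu)]^2}
\]
carries a factor $(k+\nu)^{-1}$ which cancels the $(k+\nu)$ in front of the integral in $\mathrm{RHS}(\eqref{eq:EulerLagrange}, k, p)$. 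One is left with
\[
\bigl|\mathrm{RHS}(\eqref{eq:EulerLagrange}, k, p)\bigr| \cong_{d,p} \left(\frac{p-2}{p}\right)^{k} \frac{\Gamma(k+2\nu)}{k!}.
\]

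To conclude, apply Stirling's formula to deduce $\Gamma(k+2\nu)/k! \lesssim_\nu k^{2\nu-1}$, and observe that throughout the admissible range one has
\[
\frac{p-2}{p} < \frac{\tfrac{2d}{d-1}-2}{\tfrac{2d}{d-1}} = \frac{1}{d} \leq \frac{1}{2},
\]
so a fortiori $3(p-2)/(2p) < 1$ with a margin depending on $(d,p)$. Consequently $(3(p-2)/(2p))^k k^{\nu-1}$ is bounded in $k$, and
\[
\left(\frac{p-2}{p}\right)^{k} k^{2\nu-1} = \left(\frac{2}{3}\right)^{k} k^{\nu} \cdot \left(\frac{3(p-2)}{2p}\right)^{k} k^{\nu-1} \lesssim_{d,p} \left(\frac{2}{3}\right)^{k} k^{\nu},
\]
yielding \eqref{eq:GegenbauerAsymptotics}.

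The principal obstacle is the initial recognition of the Gegenbauer generating function inside the integrand and the selection of the correct (smaller) root of the resulting quadratic; the two roots are reciprocals of each other and only $r=-(p-2)/p$ yields a convergent series. Once this identification is made, the remaining steps are orthogonality bookkeeping, Stirling, and the routine observation that $(p-2)/p$ is bounded away from $2/3$ by a gap depending on $(d,p)$, which provides enough exponential slack to absorb the polynomial overhead $k^{\nu-1}$ left over after Stirling.
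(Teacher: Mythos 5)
Your proof is correct, and it takes a genuinely different route from the paper. The paper treats $\mathcal K_{p,\nu}(t)=(a_pt+b_p)^{-\nu}$ as a general function analytic on a Bernstein ellipse $\mathcal E_{3/2}$ and invokes Wang's estimate (Lemma~\ref{lem:Wang}) to get the one-sided coefficient bound $|a_k^\nu|\lesssim_\nu (2/3)^k k^{1-\nu}$, then combines it with the $L^2(w_\nu)$ norm of $C_k^\nu$ exactly as you do. You instead observe that $a_pt+b_p$ is, up to the scaling $\alpha=p^2/2$, precisely of the form $1-2rt+r^2$ appearing in the generating function \eqref{eq:Gegenbauer}, with the admissible root $r=-(p-2)/p$ of $a_pr^2+2b_pr+a_p=0$ (the roots are reciprocal since their product is $a_p/a_p=1$, and $|r|=(p-2)/p<1/d$ in the supercritical range), so the Gegenbauer coefficients of $\mathcal K_{p,\nu}$ are computed exactly: $a_k^\nu=(2/p^2)^\nu r^k$. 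The interchange of sum and integral is justified by $|C_k^\nu(t)|\le C_k^\nu(1)\lesssim_\nu k^{2\nu-1}$ and $|r|<1$, the $(k+\nu)$ prefactor cancels against the norm formula, and Stirling plus the elementary bound $(p-2)/p<1/d\le 1/2<2/3$ gives \eqref{eq:GegenbauerAsymptotics}; all constants depend only on $(d,p)$, as allowed. What your approach buys is elementarity and sharpness: you avoid the external analyticity lemma of \cite{Wa16} entirely and obtain a two-sided asymptotic $|\mathrm{RHS}(\eqref{eq:EulerLagrange},k,p)|\cong_{d,p}((p-2)/p)^k\,\Gamma(k+2\nu)/k!$, which is strictly stronger than the stated upper bound and would even streamline the ratio estimate \eqref{eq:RatioAsymptotic}. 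What the paper's route buys is robustness: Lemma~\ref{lem:Wang} applies to any kernel analytic near $[-1,1]$, not only to the special functions that happen to coincide with the Gegenbauer generating function after an affine rescaling, and it gives a base ($2/3$) uniform over the supercritical range without case-specific algebra.
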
    
        \noindent The proof of Proposition \ref{prop:RHSAsymptotics} relies on  \cite[Theorem 4.3]{Wa16} which has recently played a role in sharp restriction theory \cite{CNOS21}.
        We recall it for the convenience of the reader. 
        \begin{lemma}[\cite{Wa16}]\label{lem:Wang}
        Let $\alpha>0$ and $\rho>1$.
            Let $\mathcal K$ be a function that is analytic inside and on the  ellipse 
            \begin{equation*}
                \mathcal E_\rho:=\left\{s\in\C\,: s=\tfrac12\big(\rho e^{i\theta}+\rho^{-1} e^{-i\theta}\big)\,, \, 0\leq \theta\leq 2\pi\right\},
            \end{equation*}
and $ \mathcal K(t)=\sum_{k=0}^\infty a_k^\alpha C_k^\alpha(t)$ be the corresponding Gegenbauer expansion  for $t\in[-1,1]$. 
Then, for any $k\geq 1$, 
          \begin{equation}\label{eq:WangConclusion}\notag
                \lvert a_k^\alpha\rvert \lesssim_{\rho, \alpha} \left(\max_{s\in\mathcal E_\rho} |\mathcal K(s)|\right) k^{1-\alpha}\rho^{-k-1}.
            \end{equation}
        \end{lemma}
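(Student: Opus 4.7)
The plan is to recognize $\mathrm{RHS}(\eqref{eq:EulerLagrange},k,p)$ as, up to an explicit factor, the $k$-th coefficient in the Gegenbauer expansion of
\begin{equation*}
    \mathcal{K}(t) := (a_p t + b_p)^{-\nu},
\end{equation*}
and then to invoke Lemma~\ref{lem:Wang} with $\rho=3/2$. The payoff $\rho^{-k}=(2/3)^k$ will yield the geometric factor in \eqref{eq:GegenbauerAsymptotics}, while the factor $k^\nu$ will come from combining the polynomial prefactor $k^{1-\nu}$ in the lemma with Stirling's approximation for the normalizing constant coming from the Gegenbauer orthogonality relation.

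For the identification step, the orthogonality relation $\int_{-1}^1 C_k^\nu(t)^2(1-t^2)^{\nu-\frac12}\,\d t\cong_d \Gamma(k+2\nu)/(k!(k+\nu))$ turns the formula preceding Proposition~\ref{prop:RHSAsymptotics} into
\begin{equation*}
    \mathrm{RHS}(\eqref{eq:EulerLagrange},k,p)\cong_{d,p}\frac{\Gamma(k+2\nu)}{k!}\,a_k^\nu,
\end{equation*}
where $\mathcal{K}(t)=\sum_{k\geq 0} a_k^\nu C_k^\nu(t)$ on $[-1,1]$. The next step is to locate the ellipse of analyticity. Fixing the standard branch cut of $z^{-\nu}$ along $(-\infty,0]$, the function $\mathcal{K}$ is holomorphic on $\C\setminus(-\infty,t_0]$, where $t_0:=-b_p/a_p<0$. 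From \eqref{eq:a_and_b_coeff} together with $p<2d/(d-1)$, an elementary computation gives
\begin{equation*}
    |t_0|=1+\frac{2}{(p-1)^2-1}>\frac{d^2+1}{2d}\geq\frac{5}{4},
\end{equation*}
valid for every $d\geq 2$. Since the real part of $\mathcal{E}_{3/2}$ is $[-\tfrac{13}{12},\tfrac{13}{12}]$ and $\tfrac{13}{12}<\tfrac{5}{4}\leq|t_0|$, the singular ray $(-\infty,t_0]$ does not meet the closed ellipse $\mathcal{E}_{3/2}$, so $\mathcal{K}$ is analytic on and inside $\mathcal{E}_{3/2}$. In particular, $\max_{s\in\mathcal{E}_{3/2}}|\mathcal{K}(s)|\lesssim_{d,p}1$.

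With this in place, Lemma~\ref{lem:Wang} applied with $\alpha=\nu$ and $\rho=3/2$ yields
\begin{equation*}
    |a_k^\nu|\lesssim_{d,p} k^{1-\nu}\bigl(\tfrac{2}{3}\bigr)^{k+1},
\end{equation*}
while Stirling's formula gives $\Gamma(k+2\nu)/k!\lesssim_d k^{2\nu-1}$ for all $k\geq 1$. Multiplying the two bounds produces the claimed estimate $|\mathrm{RHS}(\eqref{eq:EulerLagrange},k,p)|\lesssim_{d,p}(2/3)^k k^\nu$. The only non-routine step is the uniform-in-$d$ verification that the singularity $t_0$ lies strictly outside $\mathcal{E}_{3/2}$; everything else is a direct application of Lemma~\ref{lem:Wang} and standard Gamma function asymptotics.
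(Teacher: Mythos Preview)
Your proposal does not prove Lemma~\ref{lem:Wang} at all; it \emph{invokes} Lemma~\ref{lem:Wang} as a black box in order to prove Proposition~\ref{prop:RHSAsymptotics}. Lemma~\ref{lem:Wang} is the general statement that the Gegenbauer coefficients of any function $\mathcal K$ analytic on and inside $\mathcal E_\rho$ satisfy $|a_k^\alpha|\lesssim_{\rho,\alpha}(\max_{\mathcal E_\rho}|\mathcal K|)\,k^{1-\alpha}\rho^{-k-1}$. A proof of this requires a contour-integral representation of $a_k^\alpha$ (via the formula of Rodrigues and Cauchy's theorem on $\mathcal E_\rho$) together with asymptotics for the Gegenbauer polynomials of the second kind; none of that appears in your write-up. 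The paper does not prove Lemma~\ref{lem:Wang} either---it is quoted from \cite{Wa16}---so there is no ``paper's own proof'' to compare against.

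What you have actually written is, line for line, the paper's proof of Proposition~\ref{prop:RHSAsymptotics}: identify $\mathrm{RHS}(\eqref{eq:EulerLagrange},k,p)$ with $a_k^\nu$ times the squared $L^2(w_\nu)$-norm of $C_k^\nu$ (your factor $\Gamma(k+2\nu)/k!$ is exactly $(k+\nu)\lVert C_k^\nu\rVert_{L^2(w_\nu)}^2$ up to a constant in $d$), locate the singularity of $(a_pt+b_p)^{-\nu}$ outside the disk of radius $5/4$ via the bound $|b_p/a_p|\ge(d^2+1)/(2d)\ge 5/4$, observe that $\mathcal E_{3/2}$ sits inside that disk, apply Lemma~\ref{lem:Wang} with $\rho=3/2$, and finish with $\Gamma(k+2\nu)/k!\lesssim_d k^{2\nu-1}$. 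As a proof of Proposition~\ref{prop:RHSAsymptotics} this is correct and essentially identical to the paper's; as a proof of Lemma~\ref{lem:Wang} it is circular.
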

        
            \begin{proof}[Proof of Proposition \ref{prop:RHSAsymptotics}]
        The function $\mathcal K_{p, \nu}(s):=(a_p s+b_p)^{-\nu}=\sum_{k=0}^\infty a_k^\nu C_k^\nu(s)$ is analytic in the open disk centered at the origin of radius
        \begin{equation}\label{eq:complex_disk_radius}
            \left\lvert \frac{b_p}{a_p}\right\rvert =  \frac{(p-1)^2+1}{|(p-1)^2-1|
            }. 
        \end{equation}
        The right-hand side of \eqref{eq:complex_disk_radius} defines a decreasing function of $p\in (2, \frac{2d}{d-1})$, and so 
        \begin{equation}\label{eq:complex_disk_uniform}\notag
            \inf_{d\ge 2}\inf_{2<p<\frac{2d}{d-1}}\left\lvert \frac{b_p}{a_p}\right\rvert = \inf_{d\ge 2} \frac{d^2+1}{2d}=\frac54.
        \end{equation}
        We conclude that $\mathcal K_{p, \nu}$ is analytic in the open disk of radius $\frac54$, for every $2<p<2d/(d-1)$ and $\nu\ge \frac12$. Moreover, that disk contains the ellipse $\mathcal E_{\frac32}$. Lemma~\ref{lem:Wang} thus yields the estimate 
        \begin{equation}\label{eq:a_k_nu_bound}
            \lvert a_k^\nu\rvert \lesssim_{\nu}\left(\frac23\right)^{k}k^{1-\nu}.
        \end{equation}
        By orthogonality of the Gegenbauer polynomials $C_k^\nu$ with respect to the weight $w_\nu(t)=(1-t^2)^{\nu-\frac12}$, the  coefficient $a_k^\nu$ is given by
        \begin{equation*}
            a_k^\nu= 
            \lVert C_k^\nu\rVert_{L^2(w_\nu)}^{-2}
            \int_{-1}^1 (a_p t + b_p)^{-\nu} C_k^\nu(t)(1-t^2)^{\nu-\frac12} \d t.
        \end{equation*}
         The desired~\eqref{eq:GegenbauerAsymptotics} then follows from \eqref{eq:a_k_nu_bound} and the easy estimate
    \begin{equation}\label{eq:L2normGeg}\notag
        \begin{split}
            \lVert C_k^\nu\rVert_{L^2(w_\nu)}^{2}=\int_{-1}^1 C_k^\nu(t)^2\,(1- t^2)^{\nu - \frac12}\,\d t &=\frac{2^{1-2\nu}\,\pi}{\Gamma(\nu)^2}\frac{\Gamma(k + 2\nu)}{k!\, (k + \nu)} 
            \lesssim_{\nu} k^{2\nu-2}, 
        \end{split}
    \end{equation} 
    where we used the classical asymptotic for the Gamma function \cite{We48},
\begin{equation*}
    \begin{array}{cc}
        \displaystyle\lim_{k\to \infty}\frac{\Gamma(a+k)}{\Gamma(b+k)} k^{b-a}=1, \text{ for }a, b\in\mathbb R.
    \end{array}\qedhere
\end{equation*} 
    \end{proof}
    
 The sign considerations of Propositions~\ref{prop:signRHS} and ~\ref{prop:main_subcritical} do not suffice for the analysis of the supercritical case since both sides of the Euler--Lagrange equation~\eqref{eq:EulerLagrange} then seem to have the same sign. We thus resort to the study of the precise asymptotic behaviour of LHS$(\eqref{eq:EulerLagrange}, k, p)$, as $k\to \infty$.

\begin{proposition}\label{prop:LHSasympt}
Let $2<p<\frac{2d}{d-1}$ and $k\geq 2$.
Then:
 \begin{equation}\label{eq:LHSAsymptotics}\notag
       \lvert \textup{LHS}(\eqref{eq:EulerLagrange},k, p)\rvert\cong_{p, d} \frac{ \Gamma(k-\frac{\gamma_p}2)\Gamma(k+2\nu)}{\Gamma(\frac{\gamma_p}{2}+\nu+k+1)\Gamma(k+1)}.
 \end{equation}
\end{proposition}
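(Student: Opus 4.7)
The plan is to substitute directly into formula \eqref{eq:LHSafterPenrose} the closed-form evaluation of the Bessel integral supplied by Watson's Lemma \ref{lem:Watson}, and then to read off the $k$-dependent part, absorbing everything else into the implicit constant.

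First I will verify that the hypotheses of Lemma \ref{lem:Watson} are in force. In the supercritical range $2<p<2d/(d-1)$, the defining relation $\gamma_p=(d+1)(p'/2-1)$ yields $\gamma_p\in(-1,0)$; in particular $\gamma_p$ is not a nonnegative even integer, so $H_{\gamma_p}\ne 0$ and the representation \eqref{eq:LHSafterPenrose} is available. Setting $\mu=k$, $\nu'=\nu+k$ and $\lambda=1+\gamma_p+\nu$ (with $\nu=(d-1)/2\ge 1/2$), one checks that $\lambda>0$ and $\mu+\nu'+1=2k+\nu+1>\lambda$ for every $k\ge 1$, which are the conditions required by Lemma \ref{lem:Watson}.

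Next I will compute the four half-sums appearing in Watson's formula: a direct calculation gives
\begin{equation*}
\tfrac{\mu+\nu'-\lambda+1}{2}=k-\tfrac{\gamma_p}{2},\quad \tfrac{\lambda+\mu+\nu'+1}{2}=k+\nu+\tfrac{\gamma_p}{2}+1,\quad \tfrac{\lambda+\nu'-\mu+1}{2}=\nu+\tfrac{\gamma_p}{2}+1,\quad \tfrac{\lambda+\mu-\nu'+1}{2}=\tfrac{\gamma_p}{2}+1.
\end{equation*}
Only the first two are $k$-dependent. The remaining two, together with the prefactor $\Gamma(\lambda)/2^\lambda$ from Lemma \ref{lem:Watson}, the constant $H_{\gamma_p}$, and the implicit constant in \eqref{eq:LHSafterPenrose}, depend only on $p$ and $d$. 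Because $\gamma_p\in(-1,0)$ and $\nu\ge 1/2$, the arguments $\nu+\gamma_p/2+1$, $\gamma_p/2+1$ and $1+\gamma_p+\nu$ are all strictly positive, so none of the corresponding Gamma values vanish; they thus combine into a finite nonzero constant that can be absorbed into the $\cong_{p,d}$ relation.

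Multiplying the surviving $k$-dependent factor $\Gamma(k-\gamma_p/2)/\Gamma(k+\nu+\gamma_p/2+1)$ by the prefactor $\Gamma(k+2\nu)/\Gamma(k+1)$ already present in \eqref{eq:LHSafterPenrose} yields exactly the expression on the right-hand side of \eqref{eq:LHSAsymptotics}. Finally, for $\gamma_p\in(-1,0)$ and $k\ge 2$ both of these Gamma ratios are positive, so taking absolute values eliminates only the sign $(-1)^k\,\mathrm{sign}(H_{\gamma_p})$ in front. The argument is thus essentially a bookkeeping exercise; no genuine obstacle is expected, the one delicate point being to confirm that every $p,d$-dependent Gamma value stays away from the poles of $\Gamma$, which is immediate from $\gamma_p\in(-1,0)$ and $\nu\ge 1/2$.
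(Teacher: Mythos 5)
Your proposal is correct and follows exactly the paper's route: the paper's proof of Proposition \ref{prop:LHSasympt} is precisely to combine \eqref{eq:LHSafterPenrose} with Lemma \ref{lem:Watson}, and your computation of the four half-sums, the verification that $\gamma_p\in(-1,0)$ keeps $H_{\gamma_p}$ and all $p,d$-dependent Gamma factors finite and nonzero, and the identification of the surviving $k$-dependent ratio just spell out the bookkeeping the paper leaves implicit.
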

\begin{proof}
This follows from \eqref{eq:LHSafterPenrose} and Lemma \ref{lem:Watson} at once.
\end{proof}
\noindent By Propositions~\ref{prop:RHSAsymptotics} and \ref{prop:LHSasympt} we have, for every $2<p<\frac{2d}{d-1}$ and $k\geq 2$, 
\begin{align}\label{eq:RatioAsymptotic}
        \left\lvert \frac{\textup{RHS}(\eqref{eq:EulerLagrange},k, p)}{\textup{LHS}(\eqref{eq:EulerLagrange},k, p)}\right\rvert &\lesssim_{d,p}\left(\frac23\right)^k k^\nu \frac{\Gamma(\frac{\gamma_p}{2}+\nu+k+1)\Gamma(k+1)}{  \Gamma(k-\frac{\gamma_p}2)\Gamma(k+2\nu)}\notag\\
        &\lesssim_{d,p} \left(\frac23\right)^k k^{\gamma_p+2}.
    \end{align}
Since~\eqref{eq:RatioAsymptotic} tends to $0$, as $k\to \infty$, it follows that, for each  $p$ in the supercritical range, there exists $k\geq 2$ such that the test function corresponding to $Y_k$ does not satisfy the Euler--Lagrange equation~\eqref{eq:EulerLagrange}. This completes the proof of Theorem~\ref{thm:FoschianRarelyMaximize}.

\section{Existence of maximizers}\label{sec:existence}



In this section, we prove Theorem~\ref{T:extremizers exist intro}.  In doing so, it will be convenient to identify the cone with $\R^d$ via the projection $(|\xi|,\xi) \mapsto \xi$.  We will thus abuse notation by writing $\d\mu(\xi) = \tfrac{\d\xi}{|\xi|}$ and
$$
\scriptE f(t,x) := \int_{\R^d} e^{i(t,x)\cdot(|\xi|,\xi)}f(\xi)\,\tfrac{\d\xi}{|\xi|}.
$$

Throughout this section, we will say that an object (such as a constant) is \textit{permissible} if it depends on $d,p,p_0$, and an upper bound for the operator norm of $\scriptE:L^{p_0}(\textup \d\mu) \to L^{q_0}(\R^{1+d})$, {where $q_0:=q(p_0)$} as in \eqref{eq:RCrange}, and implicit constants are required to be permissible in this sense.

We begin with a discussion of the key symmetries for our analysis.

\subsection{Symmetries}\label{sec:symmetries}

By a \textit{symmetry} of $\scriptE$, we mean an isometry $S$ of $L^p(\frac{\textup d \xi}{|\xi|})$ for which there exists an isometry $T$ of $L^q(\R^{1+d})$ such that $\scriptE \circ S = T \circ \scriptE$, when restricted to the Schwartz class.  

The following symmetries (and their compositions) play a particularly important role in our analysis:  
\begin{CI}
\item Conic dilations:
$$f(\xi) \rightsquigarrow \lambda^\frac{d-1}{p}f(\lambda \xi), \qquad \scriptE f (t,x) \rightsquigarrow \lambda^{-\frac{d+1}{q}} \scriptE f(\lambda^{-1}t, \lambda^{-1}x),
$$
for $\lambda > 0$; 
\item Lorentz boosts:  
\begin{gather*}
f(\xi) \rightsquigarrow f(\xi^\perp+\jp{\xi_0}\xi^\parallel-|\xi|\xi_0),\\
\scriptE f(t,x) \rightsquigarrow \scriptE f(\jp{\xi_0}t+x\cdot\xi_0,x^\perp+\jp{\xi_0}x^\parallel+t \xi_0),
\end{gather*}
for $\xi_0 \in \R^d$, 
where the parallel and perpendicular parts of $\xi,x$ are taken with respect to $\xi_0$ and $\jp{\xi}:=\sqrt{1+|\xi|^2}$;
\item Sectorial expansions, obtained by composing a Lorentz boost with a dilation:
\begin{gather*}
f(\xi) \rightsquigarrow
\lambda^{\frac{d-1}p}f(\tfrac{1-\lambda^2}2 |\xi|\theta + \tfrac{1+\lambda^2}2 \xi^\parallel + \lambda \xi^\perp)\\
\scriptE f(t,x) \rightsquigarrow \lambda^{-\frac{d+1}q}\scriptE f(\tfrac{1+1/\lambda^2}2 t + \tfrac{1-1/\lambda^2}2 \theta\cdot x, \tfrac{1-1/\lambda^2}2 t\theta + \tfrac{1+1/\lambda^2}2 x^\parallel + \lambda x^\perp),
\end{gather*}
for $\theta \in \mathbb S^{d-1}$,  $\lambda>0$, the parallel and perpendicular parts taken with respect to $\theta$;  
\item Spacetime translations:  
$$f(\xi) \rightsquigarrow e^{-i(t_0,x_0)\cdot(|\xi|, \xi)}f(\xi), \qquad \scriptE f(t,x) \rightsquigarrow \scriptE f(t-t_0, x-x_0),
$$
for $(t_0,x_0) \in \R^{1+d}$.  
\end{CI}

We let $\scriptS$ denote the group whose elements are obtained as compositions of these symmetries, together with multiplication by unimodular complex constants.  

\subsection{Frequency localization}

In this section, we will prove that, after passing to a subsequence and applying symmetries of the operator, a maximizing sequence for \eqref{eq:cone_restr} has a subsequence with good frequency localization.  

It will be useful to introduce some additional terminology.  

We let $\chi_k$ denote a smooth cutoff function supported on  $\{|\xi| \simeq 2^{-k}\}$, with $\sum_k \chi_k \equiv 1$ in measure.  We use $A_k$ to denote the annulus $\{|\xi| \simeq 2^{-k}\}$.   A \textit{sector of angular width $2^{-j}$ at frequency scale $2^{-k}$} is a set of the form
$$
\sigma = \{\xi \in A_k : |\tfrac{\xi}{|\xi|} - \omega_0| < 2^{-j}\},
$$
for some $\omega_0 \in \mathbb S^{d-1}$. If $\sigma,\sigma'$ are two sectors of angular width $2^{-j}$ at the same frequency scale, we say $\sigma \sim \sigma'$ if $\sigma$ and $\sigma'$ are both contained in some common sector $\sigma''$ of angular width $2^{-j+C_1}$, but are not contained in a common sector of angular width $2^{-j+C_0}$, for some $1 < C_0 < C_1$ sufficiently large. For all $j \geq -1$, $k \in \Z$, let $\scriptD_{j,k}$ be a finitely overlapping cover of $A_k$ by sectors of angular width $2^{-j}$. We can construct these covers inductively for each $k$, starting with $\scriptD_{-1, k} = \{A_k\}$. For each $j$, we ensure that each $\sigma \in \scriptD_{j,k}$ is contained in some $\sigma' \in \scriptD_{j-1,k}$. With this definition, we see that 
\begin{equation}\label{E:dyadic whitney decomposition}
    \sum_k \sum_j \sum_{\sigma \sim \sigma' \in \scriptD_{j,k}} \chi_\sigma(\xi) \chi_{\sigma'}(\eta) \simeq 1
\end{equation}
for a.e.\ $\xi \neq \eta \in \R^d \setminus \{0\}$, forming a Whitney decomposition of $(\R^d \setminus \{0\})^2$ minus the diagonal $\{(\xi,\eta):\xi=\eta\}$.  We will denote 
$$
\tau_\sigma := \{(|\xi|,\xi) : \xi \in \sigma\},
$$
the lift of $\sigma$ to the cone.   

  To simplify equations, we will frequently use $\|\cdot\|_p$ to denote $\|\cdot\|_{L^p(\frac{\textup d \xi}{|\xi|})}$.  When the measure is simply $\textup d\xi$, we will indicate it by $\|\cdot\|_{L^p}$.

\begin{lemma}[Bilinear extension between annuli] \label{L:bilinear annuli}
For each $1 < p < p_0$, there exists a permissible $c_0>0$ such that
    \begin{equation} \label{E:bilinear annuli}
        \|\scriptE (f\chi_{k_1})\,\scriptE (f\chi_{k_2})\|_{q/2} \lesssim  2^{-c_0|k_1-k_2|}\|f\chi_{k_1}\|_p\|f\chi_{k_2}\|_p,  
    \end{equation}
for any $f \in L^p(\frac{\textup d \xi}{|\xi|})$.  
\end{lemma}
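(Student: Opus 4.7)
My plan is to exploit the conic scaling symmetry to place one annulus at scale one, and then to interpolate between the hypothesized bound and suitably enhanced trivial bounds to obtain the required decay.

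First, by the conic dilation symmetry listed in \S\ref{sec:symmetries}, both sides of \eqref{E:bilinear annuli} scale identically, so by choosing $\lambda = 2^{-k_2}$ I may assume $k_2=0$ and $k:=k_1-k_2\geq 0$ (swapping $k_1$, $k_2$ if necessary). The goal is then
\[
\|\scriptE(f\chi_k)\,\scriptE(f\chi_0)\|_{q/2}\lesssim 2^{-c_0 k}\|f\chi_k\|_p\|f\chi_0\|_p.
\]

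Second, I would assemble three bilinear ``anchor'' bounds. The hypothesis $\scriptE\colon L^{p_0}\to L^{q_0}$ combined with Cauchy--Schwarz in spacetime gives
\[
\|\scriptE(f\chi_k)\scriptE(f\chi_0)\|_{q_0/2}\lesssim \|f\chi_k\|_{p_0}\|f\chi_0\|_{p_0}.
\]
The trivial pointwise bound $|\scriptE g|\leq \|g\|_1$ combined with H\"older on the supports $A_k$ and $A_0$, which have measures $\simeq 2^{-k(d-1)}$ and $\simeq 1$ respectively, yields via $\|g\|_1\leq \mu(\supp g)^{1-1/p_0}\|g\|_{p_0}$ the enhanced trivial bound
\[
\|\scriptE(f\chi_k)\scriptE(f\chi_0)\|_\infty\lesssim 2^{-k(d-1)(p_0-1)/p_0}\|f\chi_k\|_{p_0}\|f\chi_0\|_{p_0}.
\]
Finally, Riesz--Thorin interpolation of $\scriptE\colon L^{p_0}\to L^{q_0}$ with the trivial $\scriptE\colon L^1\to L^\infty$ followed by Cauchy--Schwarz produces the basic bound
\[
\|\scriptE(f\chi_k)\scriptE(f\chi_0)\|_{q/2}\lesssim \|f\chi_k\|_p\|f\chi_0\|_p,
\]
which in particular already handles any bounded range of $k$.

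Third, log-convexity of $\|u\|_r$ in $1/r$ applied to $u=\scriptE(f\chi_k)\scriptE(f\chi_0)$, using the first two anchor bounds and the parameter $\alpha = q_0/q = p_0'/p'$ satisfying $1/(q/2) = 2\alpha/q_0$, gives the intermediate estimate
\[
\|\scriptE(f\chi_k)\scriptE(f\chi_0)\|_{q/2}\lesssim 2^{-k(d-1)(p_0-1)(p'-p_0')/(p_0 p')}\|f\chi_k\|_{p_0}\|f\chi_0\|_{p_0},
\]
with genuine decay in $k$ but expressed in $L^{p_0}$ norms on the inputs. The concluding step is then to convert the inputs to $L^p$ by playing this intermediate estimate against the no-decay basic bound, in such a way that a strictly positive power of $2^{-k}$ survives.

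The hard part will be precisely this final conversion: since $p<p_0$, one cannot H\"older-bound $\|f\chi_k\|_{p_0}$ by $\|f\chi_k\|_p$ on the small annulus $A_k$, so the $L^{p_0}$-level decay does not immediately descend to an $L^p$-level decay. I anticipate resolving this either (i) by a magnitude-based dyadic decomposition $f\chi_k = \sum_j f_k^{(j)}$ with $|f_k^{(j)}|\simeq 2^j$, on each of whose level sets $L^p$ and $L^{p_0}$ are comparable up to a power of the measure of $\{|f|\simeq 2^j\}\subseteq A_k$, followed by reassembly via a Lorentz-space real-interpolation argument, or (ii) by a direct multilinear complex interpolation treating all three anchor bounds as endpoints and exploiting the slack $p'>p_0'$ afforded by $p<p_0$. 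Either route should produce a permissible $c_0$ comparable to a positive multiple of $(d-1)(1/p-1/p_0)$.
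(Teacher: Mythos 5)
Your rescaling step and your three anchor estimates are all correct, but the final ``conversion'' step that you yourself flag as the hard part is a genuine gap, and neither of your two proposed fixes can close it. The obstruction is structural: your only decaying estimate has $L^{p_0}$ norms on the inputs with $p_0>p$, while your only estimate with $L^p$ inputs has no decay, so the desired exponent $p$ sits at an \emph{endpoint} of the interpolation segment rather than in its interior. Any interpolation of these anchors (bilinear complex interpolation, real/Lorentz interpolation, or a level-set decomposition with reassembly) that produces an estimate with input exponent exactly $p$ must place zero weight on the decaying anchors, and hence retains no power of $2^{-|k_1-k_2|}$. Concretely, take $f\chi_{k_1}$ to be a large constant on a subset $E\subset A_{k_1}$ of measure $\delta$, normalized in $L^p$: then $\|f\chi_{k_1}\|_{p_0}=\delta^{1/p_0-1/p}\|f\chi_{k_1}\|_p$ blows up as $\delta\to0$ at a rate independent of $k_1-k_2$, so your decaying anchors (and the level-set pieces in fix (i), which lose exactly the factor $\mu(E_j)^{-(1/p-1/p_0)}$) give nothing for such functions, while the basic bound gives no decay; the same accounting shows fix (ii) cannot produce input exponent $p$ with a surviving power of $2^{-k}$. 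Note also that the decay in \eqref{E:bilinear annuli} is a statement \emph{at} the scaling-critical pair $(p,q(p))$, so gains coming from H\"older on the support of $A_{k_1}$ (your $L^\infty$ anchor) necessarily move you off the critical target exponent and evaporate when you return to $L^{q/2}$.

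What is missing is a decaying bilinear estimate at an exponent on the \emph{other} side of $p$, so that $p$ is interior to the interpolation segment and a positive fraction of the decay survives. This is exactly how the paper proceeds: it invokes Pecher's Strichartz estimates for the wave equation with asymmetric derivative weights, choosing admissible triples $(r_j,s_j,\gamma_j)$ with $\gamma_1=\tfrac12-c$, $\gamma_2=\tfrac12+c$ and $\tfrac1{r_1}+\tfrac1{r_2}=\tfrac1{s_1}+\tfrac1{s_2}=\tfrac{d-1}{d+1}$, so that H\"older plus the annular supports yield
\begin{equation*}
\|\scriptE(f\chi_{k_1})\,\scriptE(f\chi_{k_2})\|_{\frac{d+1}{d-1}}\lesssim 2^{-c|k_1-k_2|}\|f\chi_{k_1}\|_2\,\|f\chi_{k_2}\|_2,
\end{equation*}
a genuine $L^2$-level transversality gain valid even for functions concentrated on tiny subsets of the annuli. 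It then interpolates this with the elementary bilinear bound \eqref{E:elem bilin extn} at an exponent $p_1$ chosen so that $p$ lies strictly between $2$ and $p_1$ (taking $p_1<p$ when $p<2$), which lands exactly at $(p,q(p)/2)$ with a permissible $c_0>0$. To repair your argument you would need to import such an estimate (or prove an equivalent frequency-asymmetric bound); the support-size and $L^{p_0}$-hypothesis ingredients alone do not suffice.
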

\begin{proof}
    Since the inequality is symmetric in $k_1$ and $k_2$, we may assume without loss of generality that $k_1 > k_2$. Therefore, we only need to prove the inequality with $(k_1-k_2)$ in the place of $|k_1 - k_2|$.
    
    The Strichartz inequality for the wave equation (\cite[Theorem 1]{Pecher84}) states that if $\tfrac{1}{r} + \tfrac{d-1}{2s} = \tfrac{d-1}{4}$, $s,r \geq 2$, $s \neq \infty$, and $\gamma = \tfrac{d}{2}-\tfrac{1}{r}-\tfrac{d}{s}$, then 
    \[
      \|\scriptE f\|_{L^r(\R; L^s(\R^d))} \lesssim \||\xi|^{\gamma - 1} f\|_{L^2} = \||\xi|^{\gamma - \frac{1}{2}} f\|_2. 
    \]
    The case $p=2$ of \eqref{eq:cone_restr} corresponds to the Strichartz inequality with $(r,s,\gamma)=(2\tfrac{d+1}{d-1},2\tfrac{d+1}{d-1},\tfrac12)$.  We may choose two triples  $(r_j,s_j,\gamma_j)$, $j=1,2$, obeying the preceding conditions, as well as $\tfrac{d-1}{d+1} = \tfrac{1}{r_1} + \tfrac{1}{r_2} = \tfrac{1}{s_1} + \tfrac{1}{s_2}$, $\gamma_1 = \tfrac{1}{2} - c$, and $\gamma_2 = \tfrac{1}{2} + c$ for some $c > 0$. Using H\"older and the annular supports of $f\chi_{k_1}$ and $f\chi_{k_2}$,
    \begin{align*}
        \|\scriptE (f\chi_{k_1})\,\scriptE (f\chi_{k_2})\|_\frac{d+1}{d-1} &\leq \|\scriptE (f\chi_{k_1})\|_{L^{r_1}L^{s_1}} \|\scriptE (f\chi_{k_2})\|_{L^{r_2}L^{s_2}} \\
        &\lesssim \||\xi|^{\gamma_1 - \frac{1}{2}} (f\chi_{k_1})\|_2 \||\xi|^{\gamma_2 - \frac{1}{2}} (f\chi_{k_2})\|_2 \\
        &\simeq 2^{-c(k_1 - k_2)} \|f\chi_{k_1}\|_2 \|f\chi_{k_2}\|_2.
    \end{align*}
    
    Next, we choose $1 \leq p_1 \leq p_0$ so that $p$ lies between $2$ and $p_1$. By Cauchy--Schwarz and \eqref{eq:cone_restr}, 
    \begin{equation} \label{E:elem bilin extn}
        \|\scriptE (f\chi_{k_1})\,\scriptE (f\chi_{k_1})\|_{q(p_1)/2} \lesssim \|f\chi_{k_1}\|_{p_1} \|f\chi_{k_2}\|_{p_1},
    \end{equation}
and \eqref{E:bilinear annuli} follows by complex interpolation.
\end{proof}

Lemma~\ref{L:bilinear annuli} immediately implies a stronger version of \eqref{eq:cone_restr}, which we now state.  

\begin{lemma}[Annular refinement] \label{L:annular sup}
    For some permissible $0 < \theta_0 < 1$, 
    \begin{equation} \label{E:annular sup}
    \|\scriptE f\|_q \lesssim \sup_{k\in\Z} \|\scriptE (f\chi_k)\|_q^{\theta_0} \|f\|_p^{1-\theta_0}.  
    \end{equation}
\end{lemma}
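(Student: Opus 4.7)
The plan is to combine the bilinear annular bound from Lemma~\ref{L:bilinear annuli} with a simple geometric-mean interpolation and Schur's test on $\Z$. Starting from the identity $\|\scriptE f\|_q^2 = \||\scriptE f|^2\|_{q/2}$ and expanding $|\scriptE f|^2 = \sum_{k_1,k_2} \scriptE(f\chi_{k_1}) \overline{\scriptE(f\chi_{k_2})}$, Minkowski in $L^{q/2}(\R^{1+d})$ yields
\[
\|\scriptE f\|_q^2 \leq \sum_{k_1,k_2 \in \Z} \|\scriptE(f\chi_{k_1})\, \scriptE(f\chi_{k_2})\|_{q/2}.
\]
Let $M := \sup_{k \in \Z} \|\scriptE(f\chi_k)\|_q$. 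H\"older gives the trivial bound $\|\scriptE(f\chi_{k_1})\, \scriptE(f\chi_{k_2})\|_{q/2} \leq M^2$, and interpolating geometrically against \eqref{E:bilinear annuli} at some $\theta \in (0,1)$ to be chosen produces
\[
\|\scriptE(f\chi_{k_1})\, \scriptE(f\chi_{k_2})\|_{q/2} \lesssim 2^{-c_0\theta |k_1-k_2|} \|f\chi_{k_1}\|_p^\theta \|f\chi_{k_2}\|_p^\theta \, M^{2(1-\theta)}.
\]

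Next, since the kernel $2^{-c_0 \theta|\cdot|}$ lies in $\ell^1(\Z)$, Schur's test on the corresponding convolution operator gives
\[
\sum_{k_1,k_2 \in \Z} 2^{-c_0 \theta|k_1-k_2|} \|f\chi_{k_1}\|_p^\theta \|f\chi_{k_2}\|_p^\theta \lesssim_\theta \sum_{k \in \Z} \|f\chi_k\|_p^{2\theta}.
\]
Because the $\chi_k$ form a finitely overlapping partition of unity on $\R^d \setminus \{0\}$, we have $\sum_k \|f\chi_k\|_p^p \lesssim \|f\|_p^p$; choosing $\theta$ with $2\theta \geq p$ and embedding $\ell^p \hookrightarrow \ell^{2\theta}$ yields $\sum_k \|f\chi_k\|_p^{2\theta} \lesssim \|f\|_p^{2\theta}$. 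Taking square roots and setting $\theta_0 := 1 - \theta$ establishes the refinement.

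The step that requires care is the compatibility of the constraints on $\theta$: the Schur/embedding step demands $\theta \geq p/2$, while extracting a genuine refinement ($\theta_0 > 0$) demands $\theta < 1$. These are simultaneously satisfied exactly when $p < 2$, so the argument above produces the lemma directly in that regime with any $\theta_0 \in (0, 1 - p/2)$. When $p \in [2, p_0)$, I would leverage the strict gap $p < p_0$: substitute the linear bound $\|\scriptE(f\chi_k)\|_{q_0} \lesssim \|f\chi_k\|_{p_0}$ (available by hypothesis) for a fraction of the factors in the geometric mean, thereby relaxing the required $\ell^r$-summability of $\{\|f\chi_k\|_p\}$ to an exponent $r \geq p$. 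Equivalently, one can view the argument as a bilinear complex interpolation between the bound of Lemma~\ref{L:bilinear annuli} and the assumed boundedness at $(p_0, q_0)$, which is where I expect the only real technical subtlety to lie.
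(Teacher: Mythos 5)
Your bilinear argument is correct, and more elementary than the paper's, in the range $1<p<2$: expanding $|\scriptE f|^2$, applying the triangle inequality in $L^{q/2}$ (legitimate since $q>2$), interpolating each term between \eqref{E:bilinear annuli} and the trivial bound $M^2$, and then using Schur and the embedding $\ell^p\hookrightarrow\ell^{2\theta}$ does yield \eqref{E:annular sup} with $\theta_0=1-\theta$ for any $\theta\in[p/2,1)$. But the lemma must cover all $1<p<p_0$, and $p_0$ may exceed $2$; for $2\le p<p_0$ there is a genuine gap, and the repair you sketch cannot work. The obstruction is structural and occurs already at your first step: once $\|\scriptE f\|_q^2$ is replaced by $\sum_{k_1,k_2}\|\scriptE(f\chi_{k_1})\,\scriptE(f\chi_{k_2})\|_{q/2}$, the diagonal alone is too large. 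Indeed, take $f=\sum_{k=1}^N f_k$ with $f_k$ equal to $N^{-1/p}$ times an $L^p$-normalized bump adapted to the $k$-th dyadic annulus; by the conic dilation symmetry $\|\scriptE f_k\|_q\simeq N^{-1/p}$, so the right-hand side of \eqref{E:annular sup} is $\simeq N^{-\theta_0/p}$, while $\sum_k\|(\scriptE f_k)^2\|_{q/2}=\sum_k\|\scriptE f_k\|_q^2\simeq N^{1-2/p}$, which is unbounded when $p>2$ (and yields no gain, i.e.\ $\theta_0=0$, when $p=2$). Since this loss precedes any per-term estimate, substituting the $(p_0,q_0)$ linear bound for some factors cannot recover the lemma --- note also that such a substitution controls an $L^{q_0/2}$ norm, not the $L^{q/2}$ norm you need. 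The underlying reason is that a bilinear expansion attaches homogeneity at most $2$ to each annulus index, whereas exploiting $\sum_k\|f\chi_k\|_p^p\lesssim\|f\|_p^p$ requires homogeneity at least $p$ per index.

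The paper circumvents this by proving an abstract statement in which $\|\scriptE f\|_q^q$ (not the square) is expanded as an $N$-fold product with $N=\lceil q\rceil+1$, using $q/N<1$ to bring the sum outside the power: far-from-diagonal terms are estimated by one bilinear factor from \eqref{E:bilinear annuli} (supplying the geometric decay in the annular separation) times $N-2$ factors of $\|\scriptE(f\chi_j)\|_q^{q/N}$, and near-diagonal terms by $\sum_j\|\scriptE(f\chi_j)\|_q^q$; in both regimes each annulus index carries homogeneity at least $q\frac{N-2}{N}>p$ for $N$ large, so the linear $L^p\to L^q$ bound (converting $\|\scriptE(f\chi_j)\|_q^p\lesssim\|f\chi_j\|_p^p$) and the almost orthogonality $\sum_j\|f\chi_j\|_p^p\lesssim\|f\|_p^p$ close the argument for every $1<p<q$. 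To salvage your route for $p\ge 2$ you would need to pass to such a higher-order expansion; interpolating further with the $(p_0,q_0)$ hypothesis does not address the loss incurred by the bilinear triangle inequality.
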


We note that results analogous to Lemma~\ref{L:annular sup} have appeared elsewhere, e.g., \cite{KSV}.  For the convenience of the reader, we give full details, proving a more general (in view of Lemma~\ref{L:bilinear annuli}) lemma below.

\begin{lemma}
    Let $(X, \mu)$ and $(Y, \nu)$ be measure spaces, $1 < p < q < \infty$, and $T: L^p(X) \rightarrow L^q(Y)$ a bounded linear map. Let $\{P_j\}_{j \in \Z}$ be a sequence of bounded linear operators on $L^p(X)$ such that $\sum_j P_j$ converges to the identity in the strong operator topology and $\sum_j \|P_j f\|_p^p \lesssim \|f\|_p^p$ for all $f \in L^p(X)$. Assume that, for some $c_0 > 0$,
    \begin{equation} \label{E:bilinear T}
        \|(T P_j f) (T P_k g)\|_{q/2} \lesssim 2^{-c_0|j-k|}\|f\|_p \|g\|_p
    \end{equation}
    for all $j,k \in \Z$, and $f,g \in L^p(X)$. 
    
    Then there exists $\theta_0 \in (0,1)$ such that, for any $f \in L^p(X)$, 
    \[
        \|T f\|_q \lesssim \sup_j \|T P_j f\|_q^{\theta_0} \|f\|_p^{1-\theta_0}.
    \]
\end{lemma}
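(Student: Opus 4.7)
The plan is a bilinear-to-linear refinement in the spirit of Moyua--Vargas--Vega and Tao--Vargas--Vega. Assuming $q\ge 2$ (which holds throughout the application; otherwise the $L^{q/2}$ triangle inequality is replaced by its $q/2$-quasi-version at the cost of an absolute constant), I write $\|Tf\|_q^2 = \|(Tf)^2\|_{q/2}$ and expand $(Tf)^2 = \sum_{j,k\in\Z} TP_jf\cdot TP_kf$ via the decomposition $Tf=\sum_j TP_jf$ (convergent in $L^q$ thanks to the strong convergence hypothesis and the boundedness of $T$). The triangle inequality in $L^{q/2}$ then yields
\[
\|Tf\|_q^2 \le \sum_{j,k\in\Z} \|TP_jf\cdot TP_kf\|_{q/2}.
\]

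For each summand I combine two upper bounds: the bilinear hypothesis, in its refined form $\lesssim 2^{-c_0|j-k|}\|P_jf\|_p\|P_kf\|_p$ (obtained by substituting $P_jf,P_kf$ for $f,g$ in \eqref{E:bilinear T} and using that the $P_j$ are essentially idempotent in the intended application), and the trivial bound $\|TP_jf\cdot TP_kf\|_{q/2}\le \|TP_jf\|_q\|TP_kf\|_q\le M^2$, where $M:=\sup_j\|TP_jf\|_q$. Writing $a_j:=\|P_jf\|_p$ and interpolating with parameter $\theta\in(0,1)$ produces
\[
\|TP_jf\cdot TP_kf\|_{q/2} \lesssim 2^{-c_0|j-k|(1-\theta)}\, M^{2\theta}\,(a_ja_k)^{1-\theta}.
\]

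It remains to sum this estimate over $(j,k)$. Setting $K(\ell):=2^{-c_0|\ell|(1-\theta)}$, one recasts the resulting double sum as $\langle a^{1-\theta},K\ast a^{1-\theta}\rangle$, and a Cauchy--Schwarz step together with Young's convolution inequality controls it by $\|K\|_{\ell^1}\|a^{1-\theta}\|_{\ell^2}^2$. The orthogonality hypothesis $\sum_j a_j^p\lesssim \|f\|_p^p$ combined with the sequence embedding $\ell^p\hookrightarrow \ell^{2(1-\theta)}$ (valid when $2(1-\theta)\ge p$) then yields $\|a^{1-\theta}\|_{\ell^2}\lesssim \|f\|_p^{1-\theta}$. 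Combining all the pieces gives $\|Tf\|_q^2 \lesssim M^{2\theta}\|f\|_p^{2(1-\theta)}$, and choosing $\theta_0:=\theta\in(0,\,1-p/2)$ delivers the stated inequality in the regime $p<2$.

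The principal obstacle is the range $p\ge 2$, in which $\ell^p\not\subset \ell^2$ and the symmetric Cauchy--Schwarz step breaks down. Here I would replace it by an asymmetric H\"older--Young pairing in $\ell^r,\ell^{r'}$ with $r(1-\theta)\ge p$ and the conjugate condition on $r'$; crucially, the geometric kernel $K$ lies in $\ell^s$ for every $s>0$, which supplies the flexibility needed to tune exponents so that a positive $\theta_0<1$ can still be extracted, thereby covering the full range $1<p<q$ assumed in the lemma.
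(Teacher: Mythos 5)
There is a genuine gap, and it is concentrated in exactly the regime your last paragraph tries to wave away. First, a preliminary issue: your argument needs the \emph{refined} bilinear bound $\|TP_jf\,TP_kf\|_{q/2}\lesssim 2^{-c_0|j-k|}\|P_jf\|_p\|P_kf\|_p$, but the lemma only assumes the bound with $\|f\|_p\|g\|_p$ on the right, and the $P_j$ are not assumed idempotent; without the refined form your double sum $\sum_{j,k}2^{-c_0|j-k|(1-\theta)}$ already diverges, so even the $p<2$ case of your argument uses a hypothesis the lemma does not grant (it happens to hold in the paper's application, but not in the abstract statement). Second, and more fundamentally, the quadratic expansion cannot reach $p\ge 2$, and the proposed ``asymmetric H\"older--Young'' repair does not exist: Young's inequality forces $1/r+1/u=2-1/s\ge 1$ (since $s\ge 1$), hence $\min(r,u)\le 2$, so you are always driven back to the condition $2(1-\theta)\ge p$. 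The obstruction is visible already on the diagonal $j=k$: your expansion bounds $\|Tf\|_q^2$ from above by (at least) $\sum_j\|TP_jf\|_q^2$, and under the only available controls $\|TP_jf\|_q\le M$ and $\sum_j\|P_jf\|_p^p\lesssim 1$ this diagonal sum can be as large as $M^{2-p}$ (take $\|P_jf\|_p= M$ for $M^{-p}$ indices), which blows up as $M\to 0$ when $p>2$; so no bound of the form $M^{2\theta_0}\|f\|_p^{2(1-\theta_0)}$ with $\theta_0>0$ can be extracted from the quadratic expansion. Since the lemma is applied with $p$ ranging up to $2d/(d-1)>2$ (the supercritical case), this is not a removable corner case.

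The paper's proof avoids both problems by expanding an $N$-fold product rather than a square: with $N=\lceil q\rceil+1$ one has $q/N<1$, so $\|Tf\|_q^q\le\int\sum_{j_1,\dots,j_N}|\prod_k T_{j_k}f|^{q/N}$ by subadditivity of $t\mapsto t^{q/N}$; the bilinear decay is applied only to the extreme pair $(j_1,j_N)$ (in its unrefined form, so no idempotence is needed), the sup bound is applied to the $N-2$ middle factors, and the geometric factor $2^{-c_0 q\ell/N}$ absorbs the polynomial count $\ell^{N-2}$ of middle indices. Each surviving term then carries exponent $q\frac{N-2}{N}$, which exceeds $p$ once $N$ is large because $q>p$, so the orthogonality hypothesis $\sum_j\|P_jf\|_p^p\lesssim\|f\|_p^p$ closes the argument for the full range $1<p<q$. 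If you want to salvage your bilinear route, you would have to raise the power of the expansion in this way (or otherwise generate more than two factors per term); as written, the argument proves the lemma only for $p<2$ and only under the extra refined-bilinear assumption.
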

\begin{proof}
    If $f = 0$, there is nothing to prove, so without loss of generality, we may assume $\|f\|_p = 1$. For convenience, define $T_j := TP_j$. Let $N := \lceil q\rceil +1$ and let $C > 0$ be a large constant to be chosen later. Since $\tfrac{q}{N}<1$,
    \begin{multline*}
        \|Tf\|_q^q = \int \bigl| \bigl(\sum_j T_j f\bigr)^N \bigr|^{q/N} = \int \bigl| \sum_{j_1,\dots,j_N} \prod_{k=1}^N T_{j_k} f \bigr|^{q/N}\\
        \leq \int \sum_{j_1,\dots,j_N} \bigl| \prod_{k=1}^N T_{j_k} f\bigr|^{q/N} \lesssim \int \sum_{j_1 \leq \dots \leq j_N} \bigl| \prod_{k=1}^N T_{j_k} f\bigr|^{q/N}.
    \end{multline*}
    We split this sum into the terms where $|j_1 - j_N| < C$ and those where $|j_1 - j_N| \geq C$. The first sum is bounded by a constant multiple of $\sum_j \int |T_j f|^q$ by the arithmetic-geometric mean inequality. For the second sum, we apply H\"older's inequality, \eqref{E:bilinear T}, and the arithmetic-geometric mean inequality to obtain
    \begin{align*}
        &\sum_{|j_1 - j_N| \geq C} \int \bigl| \prod_{k=1}^N T_{j_k} f\bigr|^{q/N} 
        \leq \sum_{|j_1-j_N|\geq C} \prod_{k=2}^{N-1} \|T_{j_k} f\|_q^{q/N} \| T_{j_1} f \, T_{j_N} f\|_{q/2}^{q/N} \\
        &\qquad\qquad \lesssim \sum_{\ell = C}^\infty \sum_{j_1 \leq \dots \leq j_N = j_1 + \ell} 2^{-c_0\frac{q}{N} \ell} \|f\|_p^{2q/N} \prod_{k=2}^{N-1} \|T_{j_k} f\|_q^{q/N} \\
        &\qquad\qquad\lesssim \sum_{\ell = C}^\infty  2^{-c_0\frac qN \ell} \sum_{j_1 \leq \dots \leq j_N = j_1 + \ell} \sum_{k=2}^{N-1} \|T_{j_k} f\|_q^{q \frac{N-2}{N}} \\
        &\qquad\qquad\leq \sum_{\ell = C}^\infty  2^{-c_0\frac qN \ell} \sum_{j_1} \ell^{N-2} \sum_{k=j_1}^{j_1+\ell} \|T_k f\|_q^{q \frac{N-2}{N}} \\
        &\qquad\qquad\lesssim \sum_{\ell = C}^\infty  2^{-c_0\frac qN \ell} \ell^{N-2} \sum_j \|T_j f\|_q^{q \frac{N-2}{N}} 
        \simeq \sum_j \|T_j f\|_q^{q \frac{N-2}{N}}.
    \end{align*}
   On the other hand, since $q>p$, we may choose $N$ sufficiently large that $q\frac{N-2}N>p$. Therefore,
    \begin{align*}
        \|Tf\|_q^q &\lesssim (\sup_j \|T_j f\|_q^{q-p} + \sup_j \|T_j f\|_q^{q (\frac{N-2}{N}) - p}) \sum_j \|T_j f\|_q^p \\
        &\lesssim (\sup_j \|T_j f\|_q^{q (\frac{2}{N})} + 1)(\sup_j \|T_j f\|_q^{q (\frac{N-2}{N})-p}) \sum_j \|P_j f\|_p^p \\
        &\lesssim (C\|f\|_p^{q (\frac{2}{N})} + 1)(\sup_j \|T_j f\|_q^{q (\frac{N-2}{N})-p}) \|f\|_p^p \\
        &\lesssim \sup_j \|T_j f\|_q^{q (\frac{N-2}{N})-p}.
    \end{align*}
    Since $\theta_0 := \frac{q (\frac{N-2}{N}) - p}{q} > 0$ for sufficiently large $N$, we have $\|Tf\|_q \lesssim \sup_j \|T_j f\|_q^{\theta_0}$. The result follows from the normalization $\|f\|_p = 1$.
\end{proof}

{
\begin{lemma}[Bilinear extension between sectors]\label{L:bilinear sector refinement}
For $1 < p < p_0$, there exists a permissible $1<s< p$ such that the following holds.  Let $\sigma \sim \tilde\sigma$ be two sectors of angular width $2^{-j}$ at frequency scale $2^{-k}$, and let $f,\tilde f \in L^s(\R^d)$, with supports contained in $\sigma,\tilde\sigma$, respectively.  Then
\begin{equation} \label{E:bilinear sectors}
\|\scriptE f \scriptE \tilde f\|_{q(p)/2} \lesssim 2^{2(k+j)(d-1)(\frac1s-\frac1p)}\|f\|_{ s} \|\tilde f\|_{ s}.
\end{equation}
\end{lemma}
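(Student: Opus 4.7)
The plan is to use the symmetry group $\scriptS$ to rescale $\sigma$ and $\tilde\sigma$ simultaneously to a universal pair of transverse unit-scale sectors, reducing the claim to a bilinear extension estimate at unit scale, which I would then obtain by interpolation among three standard endpoint estimates. First, I fix a direction $\theta_0 \in \mathbb S^{d-1}$ lying in the common parent sector that contains $\sigma \cup \tilde\sigma$, and let $S \in \scriptS$ be the composition of the conic dilation by $2^{-k}$ and the sectorial expansion of parameter $2^{-j}$ based at $\theta_0$. Setting $F := Sf$ and $\tilde F := S\tilde f$, both are supported on a fixed pair of angularly separated unit sectors that depends only on $d$. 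Because each generator of $\scriptS$ intertwines $\scriptE$ with an $L^q(\R^{1+d})$-isometry of the form $g \mapsto c\, g \circ \Psi$, which automatically preserves the $L^{q/2}$-norm of products, we obtain $\|\scriptE F \, \scriptE \tilde F\|_{q/2} = \|\scriptE f \, \scriptE \tilde f\|_{q/2}$. A direct Jacobian computation from the $L^p(\mathrm{d}\xi/|\xi|)$-isometry property of $S$ then yields $\|F\|_s = 2^{(k+j)(d-1)(1/s-1/p)} \|f\|_s$ (and similarly for $\tilde F$), so the claim reduces to the unit-scale bilinear bound $\|\scriptE F \, \scriptE \tilde F\|_{q(p)/2} \lesssim \|F\|_s \|\tilde F\|_s$ for some permissible $1 < s < p$, with $F, \tilde F$ supported on the fixed pair of transverse unit sectors.

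For this unit-scale estimate, I would bilinearly interpolate among three endpoints: (i) the trivial $(L^1, L^1) \to L^\infty$ bound; (ii) Wolff's bilinear cone theorem for transverse unit pieces of the cone, $(L^2, L^2) \to L^\beta$ valid for every $\beta > (d+3)/(d+1)$; and (iii) the Cauchy--Schwarz consequence of the standing hypothesis, $(L^{p_0}, L^{p_0}) \to L^{q_0/2}$. The key geometric observation is that the ratio $q(p)/p' = (d+1)/(d-1)$ is independent of $p$ and strictly exceeds the Wolff threshold $(d+3)/(d+1)$; consequently, the convex hull of these three endpoints in the $(1/P, 1/Q)$-plane properly contains a point of the form $(1/s, 2/q(p))$ with $s$ strictly below $p$.

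The principal obstacle is arranging the interpolation to actually achieve $s < p$ in every regime. When $q(p)/2$ exceeds $(d+3)/(d+1)$ --- automatic in low dimensions and, in higher dimensions, whenever $p$ stays away from $p_0$ --- interpolation between (i) and (ii) alone already produces $s < p$ by choosing $\beta$ just above the Wolff threshold. When $p$ approaches $p_0$ in higher dimensions, however, $q(p)/2$ can dip below $(d+3)/(d+1)$, and the hypothesis endpoint (iii) becomes essential: it supplies a third interpolation point which allows the output exponent to equal $q(p)/2$ while still forcing the input exponent strictly below $p$, uniformly for each $p \in (1, p_0)$.
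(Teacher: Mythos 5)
Your proposal is correct and follows essentially the same route as the paper: rescale the pair of sectors to unit scale via a conic dilation and a sectorial expansion (which are $L^p(\d\mu)$-isometries intertwining $\scriptE$ with $L^{q}$-isometries, so the product norm is preserved and the stated power of $2^{k+j}$ is exactly the Jacobian factor), then obtain the unit-scale transverse bilinear bound by interpolating the bilinear cone restriction theorem at $L^2\times L^2$ with the elementary Cauchy--Schwarz consequences of the $L^1\to L^\infty$ and hypothesized $L^{p_0}\to L^{q_0}$ bounds, the inequality $\tfrac{d+1}{d-1}>\tfrac{d+3}{d+1}$ guaranteeing $s<p$ in all regimes. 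The only (inessential) difference is that you invoke Wolff's open-range theorem with $\beta$ just above $\tfrac{d+3}{d+1}$ where the paper cites Tao's endpoint case $\beta=\tfrac{d+3}{d+1}$ with $s=2$ before interpolating with the elementary bilinear estimate at a suitable $p_1$.
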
}

We note that the condition $1 < p < p_0$ is equivalent to $q(p_0) < q(p) < \infty$.   
\begin{proof}
This is a well-known consequence of the results of \cite{TaoCone, Wo01}.  Indeed, in the case $q = 2\tfrac{d+3}{d+1}$, $k=0$, and $j=C$ (some large permissible constant), this is Theorem~1.1 of \cite{TaoCone} (with $s=2$).  We can remove the restriction $k=0$ by a dilation and the restriction $j=C$ by applying a sectorial expansion \cite[Proposition~2.6]{TaoVargasVega}).

For other values of $q$, we may interpolate with the elementary bilinear extension inequality, \eqref{E:elem bilin extn}, for some $1 \leq p_1 \leq p_0$, chosen so that $q(p)$ lies between $2\tfrac{d+3}{d+1}$ and $q(p_1)$.    
This completes the proof sketch of the lemma.
\end{proof}

Next, we use the scale and sector refinements to bound the norm of $\mathcal{E}f$ using ``chips". Let $\sigma \in \scriptD_{j,k}$ and $\ell \geq 0$. We define
$$
    f_{\sigma,\ell}(\xi) := f\chi_\sigma \chi_{\{|f|<2^\ell \mu(\tau_\sigma)^{-1/p}\|f\|_p\}} 
$$
and 
$$
    f_\sigma^\ell := f_{\sigma,\ell} - f_{\sigma,\ell - 1}.
$$
We further let $f_\sigma^0 = f_{\sigma,0}$.  

To help motivate this definition, we observe that for every $1 \leq s \leq \infty$, 
$$
\|f_{\sigma,\ell}\|_s \leq 2^\ell \mu(\tau_\sigma)^{1/s-1/p}\|f\|_p,
$$
where, recall, $\tau_\sigma$ is the lift of $\sigma$ and $\mu$ is the lift of the measure $\tfrac{\textup d\xi}{|\xi|}$ to the cone.  

\begin{lemma}[Chip refinement] \label{L:pos op}
    There exist $0 < c_1, \theta_0 < 1$ such that
    \begin{equation} \label{E:pos op}
        \|\scriptE f\|_q \lesssim \sup_{k \in \Z} \sup_{j \in \N} \sup_{\sigma \in \scriptD_{j,k}} \sup_{\ell \geq 0} 2^{-c_1 \ell} \|f_\sigma^\ell\|_p^{\theta_0} \|f\|_p^{1-\theta_0}
    \end{equation}
    for all $f \in L^p$.
\end{lemma}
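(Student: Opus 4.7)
The plan is to chain together three refinements---scale, sector, and chip---building on the preceding lemmas. I would first apply Lemma~\ref{L:annular sup} to reduce to bounding $\|\scriptE g\|_q$ for $g:=f\chi_k$ with the worst annulus index $k$. After this reduction, it suffices to prove a chip-type refinement for functions supported in a single annulus (with the supremum only over $j,\sigma,\ell$); composing with the exponent $\theta_0$ from the annular refinement then yields \eqref{E:pos op}.

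The technical core is a bilinear-to-linear reduction analogous to the $X^{p,q}$-space arguments of Tao--Vargas--Vega \cite{TaoVargasVega} and Tao \cite{TaoCone}. Using the Whitney decomposition \eqref{E:dyadic whitney decomposition}, I would expand $|\scriptE g|^2$ into a sum of products $\scriptE(g\chi_\sigma)\overline{\scriptE(g\chi_{\sigma'})}$ indexed by related pairs $\sigma\sim\sigma' \in \scriptD_{j,k}$, and exploit approximate spacetime-frequency disjointness of these products across different $(j,\sigma,\sigma')$ to derive a structural bound of the form
\begin{equation*}
\|\scriptE g\|_q^q \lesssim \sum_{j\geq -1}\sum_{\sigma\sim\sigma' \in \scriptD_{j,k}}\|\scriptE(g\chi_\sigma)\,\scriptE(g\chi_{\sigma'})\|_{q/2}^{q/2}.
\end{equation*}

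Next, I would insert the chip decomposition $g\chi_\sigma = \sum_{\ell\geq 0}f_\sigma^\ell$ and apply Lemma~\ref{L:bilinear sector refinement} to each pair of chips. Chebyshev's inequality yields $\mu(\supp f_\sigma^\ell)\lesssim 2^{-p\ell}\mu(\tau_\sigma)$ for $\ell\geq 1$, which by H\"older gives the chip bound
\[
\|f_\sigma^\ell\|_s \lesssim 2^{-\beta\ell}\mu(\tau_\sigma)^{1/s-1/p}\|f_\sigma^\ell\|_p,\qquad \beta:=\tfrac{p}{s}-1>0.
\]
Since $\mu(\tau_\sigma)\simeq 2^{-(d-1)(k+j)}$, the factor $\mu(\tau_\sigma)^{2(1/s-1/p)}$ exactly cancels the $2^{2(k+j)(d-1)(1/s-1/p)}$ from Lemma~\ref{L:bilinear sector refinement}, leaving
\[
\|\scriptE f_\sigma^{\ell_1}\,\scriptE f_{\sigma'}^{\ell_2}\|_{q/2}\lesssim 2^{-\beta(\ell_1+\ell_2)}\|f_\sigma^{\ell_1}\|_p\|f_{\sigma'}^{\ell_2}\|_p
\]
uniformly in $j,k$.

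The remaining step is summation and interpolation. Denote by $M$ the supremum on the right-hand side of \eqref{E:pos op}; inverting its definition gives $\|f_\sigma^\ell\|_p \leq M^{1/\theta_0}2^{c_1\ell/\theta_0}\|f\|_p^{1-1/\theta_0}$. For each $\ell$, I would bound $\|f_\sigma^\ell\|_p$ by the minimum of this sup bound and the trivial bound $\|g\chi_\sigma\|_p$, and combine with the $\ell^p$-orthogonality identity $\sum_{\sigma\in\scriptD_{j,k}}\sum_\ell\|f_\sigma^\ell\|_p^p = \sum_\sigma\|g\chi_\sigma\|_p^p\lesssim \|g\|_p^p\leq\|f\|_p^p$ together with the finite overlap of the Whitney covers to control the sum over $j,\sigma,\sigma',\ell_1,\ell_2$. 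Choosing $c_1>0$ small enough forces the geometric series in $\ell_1,\ell_2$ to converge with room to spare, and balancing exponents produces permissible $c_1,\theta_0$ for which \eqref{E:pos op} holds. The principal obstacle is the bilinear-to-linear reduction in the second paragraph: for the cone it requires a careful check of approximate frequency orthogonality of the bilinear products across both radial and angular scales, a delicate piece of frequency analysis originating in \cite{TaoCone,TaoVargasVega}; once the structural inequality is in hand, the chip bookkeeping is of a standard type.
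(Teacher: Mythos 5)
Your overall architecture matches the paper's: reduce to a single annulus via Lemma~\ref{L:annular sup}, expand $|\scriptE g|^2$ with the Whitney decomposition \eqref{E:dyadic whitney decomposition}, apply the bilinear sector estimate of Lemma~\ref{L:bilinear sector refinement} to chips (and your observation that $\mu(\tau_\sigma)^{1/s-1/p}$ cancels the factor $2^{(k+j)(d-1)(1/s-1/p)}$, with geometric gain $2^{-(p/s-1)\ell}$ from Chebyshev--H\"older on the chip supports, is correct and is exactly how the paper exploits $s<p$). However, there is a genuine gap at the pivotal step: the claimed structural inequality
\begin{equation*}
\|\scriptE g\|_q^q \lesssim \sum_{j}\sum_{\sigma\sim\sigma'\in\scriptD_{j,k}}\|\scriptE(g\chi_\sigma)\,\scriptE(g\chi_{\sigma'})\|_{q/2}^{q/2}
\end{equation*}
does not follow from the approximate disjointness of the Fourier supports $\tau_\sigma+\tau_{\sigma'}$. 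Bounded overlap of frequency supports yields, via the almost-orthogonality lemma of Tao--Vargas--Vega, only the summation exponent $t=\min\{q/2,(q/2)'\}$, and this is sharp: for $q/2>2$, modulated bumps with disjoint frequency supports show that $\ell^{(q/2)'}$ cannot be improved to $\ell^{q/2}$ (your inequality would amount to a reverse square-function/decoupling-type estimate for the cone far beyond what is known). Since $q=q(p)\to\infty$ as $p\to 1$, the range $q>4$ is unavoidable, so the step fails as stated.

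Replacing $q/2$ by the correct exponent $t$ is not cosmetic: after almost orthogonality one is left with $\bigl(\sum_j\sum_{\sigma}2^{2jt(\frac{d+1}{q}-\frac{d-1}{s'})}\|f_\sigma\|_s^{2t}\bigr)^{1/t}$ with $2t$ possibly much smaller than $q$, and your closing bookkeeping (the $\ell^p$ identity $\sum_{\sigma,\ell}\|f_\sigma^\ell\|_p^p\lesssim\|f\|_p^p$ plus ``balancing exponents'') no longer closes by itself. This is precisely where the paper's proof does its real work: it fixes $\theta\in(\max\{p/(2t),s/p\},1)$, splits $\|f_\sigma^\ell\|_s^{2t}$ by H\"older so as to pull out the supremum $\sup 2^{-c_1\ell}\|f_\sigma^\ell\|_p$ raised to the power $2t(1-\theta)$, and then must show that the remaining weighted sum \eqref{E:refined inequality eq 1}, carrying the sector-measure weights $|\sigma|^{2t\theta(\frac1p-\frac1s)}$, decays geometrically in $\ell$ (separate arguments for $\ell=0$ using $2t\theta>p$ and for $\ell\geq1$ using $p>s$). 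None of this interpolation scheme appears in your proposal, and with the false $\ell^{q/2}$ reduction removed, the argument as written does not reach \eqref{E:pos op}.
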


\begin{proof}
    Multiplying by a constant if needed, it suffices to consider the case $\|f\|_p = 1$.  For the moment, let us also suppose that $f = f\chi_{A_0}$, that is, $\supp f \subset \{\xi : 1<|\xi|\leq 2\}$.  Then by \eqref{E:dyadic whitney decomposition}, we can decompose
    \[
        \|\scriptE f\|_q \lesssim \bigl( \int |\sum_j \sum_{\sigma \sim \sigma' \in \scriptD_{j,0}} (\scriptE f_\sigma)(\scriptE f_{\sigma'})|^{q/2}\bigr)^{1/q}.
    \]
    Each product $(\scriptE f_\sigma)(\scriptE f_{\sigma'})$ has Fourier support contained in $\tau_\sigma + \tau_{\sigma'}$. We recall that there exists a boundedly overlapping family of parallelepipeds each containing some sumset $\tau_\sigma + \tau_{\sigma'}$, with $\sigma \sim \sigma' \in \bigcup_j \scriptD_{j,0}$.  Indeed,  when $j$ is fixed, the sums $\sigma+\sigma'$ of related sectors are easily seen to be well-approximated by boundedly overlapping rectangles that have slightly larger widths and the same orientations, so the sums of the lifts are also nearly disjoint.  When $j$ is allowed to vary, we require an additional separation in the first coordinate, which arises because if $\xi$ and $\xi'$ lie in related sectors in $\scriptD_{j,0}$, the angle between them is approximately $2^{-j}$. Therefore, 
    \[
        |\xi| + |\xi'| - |\xi+\xi'| = \frac{2(|\xi||\xi'| - \xi\cdot\xi')}{|\xi| + |\xi'| + |\xi + \xi'|} = \frac{2|\xi||\xi'|(1 - \cos \angle (\xi, \xi'))}{|\xi| + |\xi'| + |\xi+\xi'|} \simeq 2^{-2j}.
    \]

     Let  $t := \min \{\tfrac{q}{2}, (\tfrac{q}{2})'\}$; thus $2t>p>s$, with $s$ as in \eqref{E:bilinear sectors}.  Now we invoke almost orthogonality  (\cite[Lemma 6.1]{TaoVargasVega}) and Lemma \ref{L:bilinear sector refinement} with $k=0$, noting that $\frac1s-\frac1p=\frac1{p'}-\frac1{s'}=\frac{d+1}{(d-1)q}-\frac1{s'}$:  
    \begin{align*}
        &\|\scriptE f\|_q^2 \lesssim \|\sum_j \sum_{\sigma \sim \sigma' \in \scriptD_{j,0}} (\scriptE f_\sigma)(\scriptE f_{\sigma'})\|_{q/2} \lesssim \bigl( \sum_j \sum_{\sigma \sim \sigma' \in \scriptD_{j,0}} \|(\scriptE f_\sigma)(\scriptE f_{\sigma'})\|_{q/2}^t\bigr)^{1/t} \\
        &\qquad \lesssim \bigl( \sum_j \sum_{\sigma \sim \sigma' \in \scriptD_{j,0}} 2^{2jt(\frac{d+1}{q}-\frac{d-1}{s'})} \|f_\sigma\|_s^t \|f_{\sigma'}\|_s^t\bigr)^{1/t} \\
        &\qquad \lesssim \bigl( \sum_j \sum_{\sigma'' \in \scriptD_{j-1,0}} 2^{2jt(\frac{d+1}{q} - \frac{d-1}{s'})} \|f_{\sigma''}\|_s^{2t} \bigr)^{1/t},
    \end{align*}
    where $\sigma \cup \sigma' \subseteq \sigma''$. 
    Let $\max \{\tfrac{p}{2t}, \tfrac{s}{p}\} < \theta < 1$. After reindexing $\sigma'' \mapsto \sigma$ and noting that $|\sigma| \sim 2^{j(d-1)}$, for $\sigma \in \scriptD_{j-1,0}$, then applying H\"older's inequality and the fact that $2t>p>s$, we see that for any $c_1>0$, 
    \begin{align*}
        &\|\scriptE f\|_q^{2t} 
        \lesssim \sum_j\sum_{\sigma \in \scriptD_{j-1,0}} |\sigma|^{2t(\frac{1}{s'} - \frac{1}{p'})} (\sum_{\ell \geq 0} \|f_\sigma^\ell\|_s^s)^{2t/s}  \\
        &\quad\leq \bigl(\sup_j \sup_{\sigma \in \scriptD_{j-1,0}} \sup_{\ell \geq 0} 2^{-\frac{c_1 \ell}{1-\theta}} |\sigma|^{2(\frac{1}{s'} - \frac{1}{p'})} \|f_\sigma^\ell\|_s^2\bigr)^{(1-\theta)t} 
        \\
        &\quad\qquad \times  
        \sum_j \sum_{\sigma \in \scriptD_{j-1,0}} |\sigma|^{2t\theta (\frac{1}{s'} - \frac{1}{p'})} (\sum_{\ell \geq 0} 2^{\frac{c_1 \ell s}{2}} \|f_\sigma^\ell\|_s^{\theta s})^{2t/s}  \\
        &\quad\lesssim \bigl(\sup_j \sup_{\sigma \in \scriptD_{j-1,0}} \sup_{\ell \geq 0} 2^{-c_1 \ell} \|f_\sigma^\ell\|_p^{2t(1-\theta)}\bigr) 
        \bigl(\sum_j \sum_{\sigma \in \scriptD_{j-1,0}} |\sigma|^{2t\theta(\frac{1}{p} - \frac{1}{s})} \sum_{\ell \geq 0} 2^{t c_1 \ell} \|f_\sigma^\ell\|_s^{2t\theta}\bigr).
    \end{align*}
    (We recall that for $\sigma \subseteq A_0$, $|\sigma| \simeq \mu(\tau_\sigma)$.) 
    Because we can take $c_1>0$ arbitrarily small, it remains to prove that
\begin{equation}\label{E:refined inequality eq 1}
        \sum_j \sum_{\sigma \in \scriptD_{j-1,0}} |\sigma|^{2t\theta(\frac{1}{p} - \frac{1}{s})} \|f_\sigma^\ell\|_s^{2t\theta}
    \end{equation}
    decays geometrically in $\ell$. 
    For $\ell = 0$, we apply H\"older and $2t\theta > p$ to obtain
    \begin{align*}
        \eqref{E:refined inequality eq 1} &\leq \sum_j \sum_{\sigma \in \scriptD_{j-1,0}} |\sigma|^{\frac{2t\theta}{p} - 1} \|f_\sigma^0\|_{2t\theta}^{2t\theta} \lesssim \sum_j 2^{-j(d-1)(\frac{2t\theta}{p} - 1)} \int_{|f| < 2^\frac{j(d-1)}{p}} |f|^{2t\theta} \\
        &\lesssim \int \sum_{j \gtrsim \frac{p}{d-1} \log |f|} 2^{-j(d-1)(\frac{2t\theta}{p} - 1)} |f|^{2t\theta} 
        \simeq \int |f|^p \simeq 1.
    \end{align*}
    For $\ell \geq 1$, we apply H\"older twice to obtain
    \begin{align*}
        \eqref{E:refined inequality eq 1} &\leq \Big( \sum_j \sum_{\sigma \in \scriptD_{j-1,0}} 2^{- \frac{j(d-1)(s-p)}{p}} \|f_\sigma^\ell \|_s^s\Big)^\frac{2t\theta}{s} 
        \!\!= \Big( \sum_j 2^{\frac{j(d-1)(p-s)}{p}} \!\!\int_{|f| 
        \simeq 2^{\frac{j(d-1)}{p} + \ell}} |f|^s \Big)^\frac{2t\theta}{s} \\
        &\lesssim \Big( \sum_j 2^{-\ell(p-s)} \int_{|f| \simeq 2^{\frac{j(d-1)}{p} + \ell}} |f|^p \Big)^\frac{2t\theta}{s} 
        \leq 2^{-\ell(p-s)} \|f\|_p^\frac{2t\theta p}{s} \simeq 2^{-\ell(p-s)}.
    \end{align*}
    
    Now consider a function $f$ with arbitrary support. By Lemma \ref{L:annular sup} and scaling, there exists $\theta_0 \in (0,1)$ such that
    \[
        \|\scriptE f\|_q \lesssim \sup_k \|\scriptE (f\chi_k)\|_q^{\theta_0} \|f\|_p^{1-\theta_0} = \sup_k \|\scriptE g_k\|_q^{\theta_0} \|f\|_p^{1-\theta_0},
    \]
    where $g_k(\xi) = 2^{k\frac{d-1}{p}}f(2^k \xi) \chi_1(\xi)$. For all $\ell$ and $\sigma \in \scriptD_{j,0}$, we see that
    \begin{multline*}
        (g_k)_\sigma^\ell(\xi) = g_k(\xi) \chi_\sigma(\xi) \chi_{\{|g_k| \simeq 2^\ell \mu(\tau_\sigma)^{-1/p}\|g_k\|_p\}}(\xi) \\
        = 2^{k\frac{d-1}{p}} f(2^k \xi) \chi_{2^k \sigma}(2^k \xi) \chi_{\{|f(2^k \cdot)| \simeq 2^\ell \mu(\tau_{2^k \sigma})^{-1/p} \|f\|_p\}}(\xi) = 2^{k\frac{d-1}{p}}f_{2^k\sigma}^\ell(2^k \xi).
    \end{multline*}
    By construction, $2^k \sigma \in \scriptD_{j,k}$. Since we have already proved the result for functions supported on an annulus and $\supp g_k \subset \{1 < |\xi| \leq 2\}$,
    \begin{align*}
        \sup_k \|\scriptE g_k\|_q^{\theta_0} \|f\|_p^{1-\theta_0} &\lesssim \sup_k \sup_j \sup_{\sigma \in \scriptD_{j,0}} \sup_\ell 2^{-c_0 \theta_0 \ell} \|\scriptE (g_k)_\sigma^\ell\|_q^{\theta   \theta_0} \|g_k\|_p^{\theta_0(1-\theta)} \|f\|_p^{1-\theta_0} \\
        &= \sup_{j,k} \sup_{\sigma \in \scriptD_{j,k}} \sup_\ell 2^{-c_0\theta_0 \ell} \|\scriptE f_\sigma^\ell\|_q^{\theta   \theta_0} \|f\|_p^{1-\theta   \theta_0}.
    \end{align*}
    This completes the proof of the lemma.
\end{proof}

\begin{lemma}[Chip extraction] \label{L:chips}
For $1 < p < p_0$, there exists a permissible sequence $\rho_m  \searrow 0$, such that for every $f \in L^p(\frac{\textup d \xi}{|\xi|})$, there exists a sequence of sectors $\sigma_m$, such that if $r^m$ and $f^m$ are defined recursively by
$$
r^0:=f, \quad r^m:=r^{m-1}-f^m, \quad f^m:=r^{m-1}\chi_{\sigma_m}\chi_{\{|f|<2^m\mu(\tau_{\sigma_m})^{-1/p}\|f\|_p\}},
$$
then 
$$
\|\scriptE h^m\|_{q} \leq \rho_m \|f\|_p,
$$
for every measurable $h^m$ with $|h^m| = |r^m|\chi_E$, for some measurable set $E$.  
\end{lemma}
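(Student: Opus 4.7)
The plan is an inductive chip extraction powered by Lemma~\ref{L:pos op}, terminated by an $L^p$ pigeonhole on the pairwise disjoint supports of the chips $\{f^m\}$. At each step I split the supremum in Lemma~\ref{L:pos op} at level $m$: the high-level part decays universally as $2^{-c_1 m}\|f\|_p$ with no choice required, while for the low-level part I greedily choose $\sigma_m$ so that $f^m$ contains the chip produced by the refinement. Disjointness then forces $\sum_m\|f^m\|_p^p\le\|f\|_p^p$, so only finitely many extraction steps can carry appreciable mass.

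\textbf{Level decoupling.} For any measurable $h$ with $|h|\le|f|$, using $\|h_\sigma^\ell\|_p\le\|h\|_p\le\|f\|_p$ for $\ell>m$ in Lemma~\ref{L:pos op} yields
\begin{equation*}
\|\scriptE h\|_q \,\lesssim\, \sup_{j,k,\sigma,\ell\le m}2^{-c_1\ell}\|h_\sigma^\ell\|_p^{\theta_0}\|h\|_p^{1-\theta_0}\, +\, 2^{-c_1 m}\|f\|_p.
\end{equation*}

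\textbf{Greedy extraction.} Define $\sigma_m$ inductively. Let $\tilde M_m:=\sup\{\|\scriptE h\|_q:|h|\le|r^m|\}$; this is decreasing in $m$ and dominates the quantity in the conclusion. If $\tilde M_{m-1}\lesssim 2^{-c_1 m}\|f\|_p$, take $\sigma_m$ arbitrary. Otherwise, fix $h^\ast$ with $|h^\ast|\le|r^{m-1}|$ and $\|\scriptE h^\ast\|_q\ge\tilde M_{m-1}/2$ and apply the level-decoupled estimate to $h^\ast$; the low-level term dominates, producing $\sigma^\ast$ and $\ell^\ast\le m$ with
\begin{equation*}
\|(h^\ast)_{\sigma^\ast}^{\ell^\ast}\|_p \,\gtrsim\, (\tilde M_{m-1}/\|f\|_p)^{1/\theta_0}\,\|f\|_p,
\end{equation*}
and set $\sigma_m:=\sigma^\ast$. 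Because each $r^k$ is $f$ times the indicator of a measurable set, $|h^\ast|\in\{0,|f|\}$ pointwise, so on the support of $(h^\ast)_{\sigma^\ast}^{\ell^\ast}$ one has $|f|=|h^\ast|<2^{\ell^\ast}\mu(\tau_{\sigma^\ast})^{-1/p}\|h^\ast\|_p\le 2^m\mu(\tau_{\sigma^\ast})^{-1/p}\|f\|_p$. Thus this chip is contained in $f^m$, giving $\|f^m\|_p\ge\|(h^\ast)_{\sigma^\ast}^{\ell^\ast}\|_p$.

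\textbf{Termination and main obstacle.} Pairwise disjoint supports give $\sum_m\|f^m\|_p^p\le\|f\|_p^p$, so the number of indices $m$ with $\tilde M_{m-1}\ge\rho\|f\|_p$ is $\lesssim\rho^{-p/\theta_0}$. Monotonicity of $\tilde M_m$ yields $\tilde M_m\lesssim m^{-\theta_0/p}\|f\|_p$; adding the $2^{-c_1 m}\|f\|_p$ tail produces a permissible sequence $\rho_m\searrow 0$ with $\|\scriptE h^m\|_q\le\tilde M_m\le\rho_m\|f\|_p$, as required. The most delicate point is the mass transfer in the inductive step: the chip of $h^\ast$ is calibrated by $\|h^\ast\|_p$ (which may be $\ll\|f\|_p$), while $f^m$ uses $\|f\|_p$; the dichotomy $|r^{m-1}|\in\{0,|f|\}$ combined with $\ell^\ast\le m$ is precisely what forces the chip into the low-$|f|$ slab $\{|f|<2^m\mu(\tau_{\sigma_m})^{-1/p}\|f\|_p\}$ that $f^m$ captures.
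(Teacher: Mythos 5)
Your construction works and rests on the same two pillars as the paper's proof: the chip refinement of Lemma~\ref{L:pos op} and the pigeonhole $\sum_m\|f^m\|_p^p\le\|f\|_p^p$ coming from the disjoint supports of the $f^m$. The arrangement is a genuine variant, though. The paper chooses $\sigma_m$ to maximize $\|f^m\|_p$ (invoking dominated convergence to get a maximizing sector) and argues by contradiction: if some $h^m$ had $\|\scriptE h^m\|_q>\rho_m\|f\|_p$, Lemma~\ref{L:pos op} produces one witness chip at a level $\ell\lesssim\log(1/\rho_m)\le m/2$, and since that chip is dominated by $r^{m'-1}$ and lies in the step-$m'$ slab for every $m'\in[m/2,m]$, maximality forces $\|f^{m'}\|_p>\rho_m^{1/\theta_0}$ for about $m/2$ steps, contradicting $\|f\|_p=1$. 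You instead choose $\sigma_m$ to swallow a big chip of a near-maximizer $h^\ast$ of the extension norm over the remainder class, and then run the count through the monotone quantity $\tilde M_m$; this avoids needing an exact maximizing sector, at the cost of introducing $\tilde M_m$, and both routes yield $\rho_m\simeq m^{-\theta_0/p}$ (your extra $2^{-c_1m}$ tail and the $O(\log(1/\rho))$ initial indices falling into your ``arbitrary $\sigma_m$'' case are harmless in the count).

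One step does need repair as written: you define $\tilde M_m:=\sup\{\|\scriptE h\|_q:|h|\le|r^m|\}$, but then assert $|h^\ast|\in\{0,|f|\}$ pointwise, which is false for a general $h^\ast$ with $|h^\ast|\le|r^{m-1}|$; without that dichotomy the chip of $h^\ast$ need not lie in the slab $\{|f|<2^m\mu(\tau_{\sigma_m})^{-1/p}\|f\|_p\}$ that $f^m$ captures, and the mass transfer $\|f^m\|_p\ge\|(h^\ast)_{\sigma^\ast}^{\ell^\ast}\|_p$ breaks. The fix is immediate: take the supremum over the class $|h|=|r^m|\chi_E$, which is exactly the class appearing in the conclusion of Lemma~\ref{L:chips}. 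This class is still monotone under $m\mapsto m+1$ (since $r^m$ is $r^{m-1}$ times an indicator, $|r^m|\chi_E=|r^{m-1}|\chi_{E'}$), the level decoupling and the near-maximizer selection are unchanged, and now $|h^\ast|\in\{0,|f|\}$ holds, so $\ell^\ast\le m$ and $\|h^\ast\|_p\le\|f\|_p$ place the chip inside the slab and the rest of your argument goes through verbatim.
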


\begin{proof}
We will prove that the lemma holds with $\rho_m = Cm^{-\theta_0/p}$, with $\theta_0$ taken from \eqref{E:pos op}.  

We may assume that $\|f\|_p=1$.  By the dominated convergence theorem, given $r^{m-1}$, we may choose $\sigma_m$ to maximize $\|f^m\|_p$.  If $|h^m| = |r^m|\chi_E$ has $\|\scriptE h^m\|_q > \rho_m$, then (since $\|h^m\|_p \leq 1$),
$$
\sup_{\ell\geq 0}\sup_{\sigma } 2^{-c_0\ell}\|(h^m)_\sigma^\ell\|_p^\theta > \rho_m.
$$
Then there exists some $\ell \leq -\tfrac1{c_0} \log_2 \rho_m$ such that $\|(h^m)_\sigma^\ell\|_p > \rho_m^{1/\theta_0}$.  Since 
$$
|(h^m)_\sigma^\ell| = |r^m|\chi_E\chi_\sigma\chi_{\{|r^m| < 2^\ell \mu(\tau_\sigma)^{-1/p}\|h^m\|_p\}},
$$
and $\|h^m\|_p \leq 1$, 
$$
\|r^m\chi_\sigma\chi_{\{|r^m|<2^\ell\mu(\tau_\sigma)^{-1/p}\}}\|_p > \rho_m^{1/\theta}.
$$
Since $\ell \leq -\tfrac1{c_0}\log_2\tfrac{C}{m^{1/p}} \leq m/2$ (for $C$ sufficiently large), by the maximal condition on $\sigma_{m+1}$, $\|f^{m+1}\|_p > \rho_m^{1/\theta}$.  In fact, for any $m/2 \leq m' \leq m$, exactly the same arguments shows $\|f^{m'}\|_p > \rho_m^{1/\theta}$.  By construction, the $f^{m'}$ have disjoint supports and $\sum|f^{m'}| \leq |f|$.  Therefore $\|f\|_p > m/2 \rho_m^{p/\theta} \geq 1$, a contradiction.  Tracing back and applying Lemma~\ref{L:pos op}, we must have had $\|\scriptE h^m\|_{q} \leq \rho_n$ all along.  
\end{proof}

\begin{proposition}[One big chunk] \label{P:one chunk}
Let $1 < p < p_0$ and $\{f_n\}\subseteq L^p(\frac{\textup d \xi}{|\xi|})$, with 
$$
\|f_n\|_p \equiv 1, \qtq{and} \lim_{n\to\infty}\|\scriptE f_n\|_{q} = \mathbf{A}_{p,q}.
$$
There exist symmetries $S_n \in \scriptS$ such that, setting $\tilde f_n:=S_nf_n$, 
\begin{equation} \label{E:one chunk}
    \lim_{R \to \infty} \lim_{n\to\infty} \|\tilde f_n(1-\chi_{\{R^{-1} \leq |\xi| \leq R\}}\chi_{\{|\tilde f_n|\leq R\}})\|_p = 0.
\end{equation}
\end{proposition}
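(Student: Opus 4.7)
The plan is to build on the frequency-localization machinery developed in the excerpt. I would extract a ``principal chip'' of uniformly positive $L^p$ mass via the chip refinement of Lemma~\ref{L:pos op}, normalize it by a symmetry $S_n \in \scriptS$ so that it occupies a fixed bounded region of $\R^d \setminus \{0\}$ with bounded pointwise values, and then argue by contradiction that no additional mass of $\tilde f_n := S_n f_n$ can escape the window $\{R^{-1} \leq |\xi| \leq R,\ |\tilde f_n| \leq R\}$ without violating near-maximality. The contradiction is driven by bilinear almost-orthogonality between the principal chip and any escaping mass, coupled with the strict $\ell^q$-versus-$\ell^p$ subadditivity forced by $q>p$.

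Since $\|f_n\|_p = 1$ and $\|\scriptE f_n\|_q \to {\bf A}_{p,q}$, Lemma~\ref{L:pos op} yields $k_n \in \Z$, $j_n \in \N$, $\sigma_n \in \scriptD_{j_n,k_n}$, and $\ell_n$ bounded by a permissible constant $L_0$ such that $\|(f_n)_{\sigma_n}^{\ell_n}\|_p \geq c_0 > 0$. Because the chip decomposition intertwines covariantly with every generator of $\scriptS$, I can select $S_n$ (a conic dilation composed with a sectorial expansion) sending $\sigma_n$ to a fixed unit-scale, unit-width sector $\sigma_0 \subset A_0$. The transplanted chip $(\tilde f_n)_{\sigma_0}^{\ell_n}$ is then supported in $\sigma_0$ with $|(\tilde f_n)_{\sigma_0}^{\ell_n}| \leq 2^{L_0}\mu(\tau_{\sigma_0})^{-1/p}$, and in particular it sits inside the window for a permissible $R_0$.

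Suppose for contradiction that \eqref{E:one chunk} fails. A diagonal extraction yields $\eta > 0$ and $R_n \to \infty$ with $\|B_n\|_p \geq \eta$, where $B_n := \tilde f_n(1 - \chi_{\{R_n^{-1}\leq |\xi|\leq R_n\}}\chi_{\{|\tilde f_n|\leq R_n\}})$ and $A_n := \tilde f_n - B_n$. For $R_n \geq R_0$, $A_n$ contains the principal chip, so $\|A_n\|_p \geq c_0$ and $\|A_n\|_p^p + \|B_n\|_p^p = 1$. The heart of the argument is to show
\[
\|\scriptE A_n \cdot \scriptE B_n\|_{q/2} \longrightarrow 0 \qquad \text{as } n \to \infty.
\]
I would split $B_n = B_n^{\mathrm{frq}} + B_n^{\mathrm{amp}}$ into its frequency-far part (support in $|\xi|\notin [R_n^{-1},R_n]$) and its amplitude-high part (support in $\{|\xi|\in[R_n^{-1},R_n]\}\cap\{|\tilde f_n|>R_n\}$). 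The interaction with $B_n^{\mathrm{frq}}$ is controlled via Lemma~\ref{L:bilinear annuli} after a dyadic annular decomposition of both factors. The interaction with $B_n^{\mathrm{amp}}$ is handled through the Whitney decomposition \eqref{E:dyadic whitney decomposition} paired with Lemma~\ref{L:bilinear sector refinement} at some $s \in (1,p)$: Chebyshev's inequality gives $\mu(\{|\tilde f_n|>R_n\}) \leq R_n^{-p}$, hence $\|B_n^{\mathrm{amp}}\|_s \lesssim R_n^{-(p-s)/s} \to 0$, a gain strong enough to overcome the polynomial loss in angular width appearing in the bilinear sector estimate.

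Inserting this bilinear vanishing into the pointwise inequality (valid for $q\geq 2$)
\[
\|\scriptE \tilde f_n\|_q^q \leq \|\scriptE A_n\|_q^q + \|\scriptE B_n\|_q^q + C_q\|\scriptE A_n \cdot \scriptE B_n\|_{q/2}\bigl(\|\scriptE A_n\|_q^{q-2}+\|\scriptE B_n\|_q^{q-2}\bigr),
\]
together with $\|\scriptE h\|_q \leq {\bf A}_{p,q}\|h\|_p$, gives $\|\scriptE \tilde f_n\|_q^q \leq {\bf A}_{p,q}^q\bigl(\|A_n\|_p^q + \|B_n\|_p^q\bigr) + o(1)$. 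Since $q > p$ and both $\|A_n\|_p, \|B_n\|_p$ are uniformly bounded below while $\|A_n\|_p^p + \|B_n\|_p^p = 1$, there is a fixed $\delta > 0$ with $(\|A_n\|_p^q + \|B_n\|_p^q)^{1/q} \leq 1 - \delta$, contradicting $\|\scriptE \tilde f_n\|_q \to {\bf A}_{p,q}$. The main technical obstacle will be the bilinear estimate for $B_n^{\mathrm{amp}}$, for which annular separation is unavailable; one must couple the sparse-support $L^s$ bound with the sector-refined bilinear inequality while carefully tracking the joint dependence of the estimate on angular width, frequency scale, and chip level.
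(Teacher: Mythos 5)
Your opening moves (extracting a principal chip of mass $\gtrsim 1$ with bounded level $\ell$ from Lemma~\ref{L:pos op}, renormalizing it by an element of $\scriptS$ to unit frequency scale and angular width, and aiming for a strict-subadditivity contradiction using $q>p$) mirror the paper's strategy, but the central claim of your contradiction argument, $\|\scriptE A_n\,\scriptE B_n\|_{q/2}\to 0$, is not justified by the lemmas you cite, and this is a genuine gap rather than a technicality. For the frequency-far part, Lemma~\ref{L:bilinear annuli} gives decay only in the number of dyadic scales separating the two factors; nothing in your setup prevents $A_n$ from carrying $O(1)$ mass in the annuli just inside $|\xi|\simeq R_n$ (or $R_n^{-1}$) while $B_n^{\mathrm{frq}}$ carries $O(1)$ mass just outside, in which case the Schur-type sum over annular pairs is merely bounded, not $o(1)$. (This could in principle be patched by a pigeonhole choice of the cut radius inside a low-mass transition shell, but you do not do this, and the escaping mass may be spread over many scales adjacent to any cut you choose.) The amplitude-high part is worse, because $B_n^{\mathrm{amp}}$ overlaps $A_n$ completely in frequency, so no annular or sectorial separation is available at all. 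Your proposed remedy -- Whitney decomposition \eqref{E:dyadic whitney decomposition} plus Lemma~\ref{L:bilinear sector refinement} plus $\|B_n^{\mathrm{amp}}\|_s\lesssim R_n^{-(p-s)/s}$ -- does not close: per related pair of sectors the loss $\mu(\tau_{\sigma})^{-(\frac1s-\frac1p)}$ attached to each factor is exactly cancelled by the H\"older gain $\mu(\supp B_n^{\mathrm{amp}}\cap\sigma')^{\frac1s-\frac1p}$ once the sector measure drops below $R_n^{-p}$, leaving only quantities bounded by $\|A_n\chi_\sigma\|_p\|B_n\chi_{\sigma'}\|_p$, whose almost-orthogonal sum is $O(1)$. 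The global $L^s$ smallness of $B_n^{\mathrm{amp}}$ cannot be traded against the unbounded angular-width loss in \eqref{E:bilinear sectors}; you flag this as ``the main technical obstacle'' but the obstacle is precisely where the proof lives.

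This is exactly the difficulty the paper's greedy chip construction is built to circumvent, and it explains why the paper does not split $f_n$ by a sharp frequency/amplitude window at all. In the paper's proof, every piece $f_n^m$ produced by Lemma~\ref{L:chips} has amplitude pinned to $2^m\mu(\tau_{\sigma_n^m})^{-1/p}$, so a ``bad'' chip whose angular width shrinks ($j_n^m\to\infty$ at fixed frequency scale) automatically satisfies $\|f_n^m\|_{p_1}\lesssim \mu(\tau_{\sigma_n^m})^{1/p_1-1/p}\to 0$ for a suitable $p_1<p$, which is what makes the good--bad bilinear terms \eqref{E:bilin good bad} vanish; chips at divergent frequency scales decouple by Lemma~\ref{L:bilinear annuli}; and the part of $f_n$ not captured by finitely many chips is discarded because its \emph{extension} is small (Lemma~\ref{L:chips}), not because its $L^p$ or $L^s$ norm is small. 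Your sets $A_n$ and $B_n$ carry none of this structure, so the decoupling you need is unavailable. To repair your argument you would essentially have to reintroduce the chip decomposition inside $B_n$ (distinguishing pieces whose sector parameters stay bounded from those that do not, and treating the unstructured remainder via its small extension norm), at which point you have reconstructed the paper's proof.
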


\begin{proof}
For each $n$, we let $\{\sigma_n^m\}$, $\{f_n^m\}$, $\{r_n^m\}$ denote the sequences of sectors and functions (resp.) associated to $f_n$, as defined in the proof of Lemma~\ref{L:chips}.  Roughly, we will show that the $\{f_n^m\}$ are all either negligible (as $n \to \infty$) or localized to sectors at comparable scales and angular widths.  

We begin by identifying an appropriate rescaling.  By the triangle inequality and Lemma~\ref{L:chips}, there exists a permissible $M$ such that for all sufficiently large $n$, there exists $m_n \leq M$ such that $\|f_n^{m_n}\|_p \gtrsim 1$.  Applying symmetries $S_n\in\mathcal S$, we may assume that each $\sigma_n^{m_n}$ has angular width $\gtrsim 1$ and lives at frequency scale 1.  By separately considering a permissible number of subsequences, we may assume that $m_n = m_0$ for all $n$.  

Let $\{2^{-k_n^m}\}$ and $\{2^{-j_n^m}\}$ denote the frequency scale and angular width (resp.) of $\sigma_n^m$.  We say that $m \in \N$ is `good' if every subsequence $\{n_\ell\}$ possesses a further subsequence $\{\tilde n_\ell\}$ such that $\lim_{\ell \to \infty}\|f_{\tilde n_\ell}^m\|_p = 0$, or $k_{\tilde n_\ell}^m$ and $j_{\tilde n_\ell}^m$ are constant (in $\ell$).  We will, of course, say that a not-good $m$ is `bad.'  Naturally, $m_0$ is good, and, in fact, we will show that there are no bad $m$'s.  

Suppose that the index $m_1$ is bad.  Then, by Cantor's diagonalization argument, there exists a subsequence $\{n_\ell\}$ along which
$$
\lim_{\ell \to \infty} \|f_{n_\ell}^m\|_p, \: \lim_{\ell \to \infty} k_{n_\ell}^m, \qtq{and} \lim_{\ell \to \infty} j_{n_\ell}^m
$$
all exist (with the latter two possibly infinite) for all $m$, such that 
$$
\lim_{\ell \to \infty} \|f_{n_\ell}^{m_1}\|_p > 0; \qtq{and} \lim_{\ell \to \infty} k_{n_\ell}^{m_1}=\infty \qtq{or} \lim_{\ell \to \infty} j_{n_\ell}^{m_1}=\infty.  
$$
We will derive a contradiction by showing that $f_{n_\ell}$ is not maximizing.  To simplify expressions, we will drop the extra subscript $\ell$.  

For convenience we modify the definitions of good and bad slightly so that (along our subsequence) 
$$
\lim_{n \to \infty} \|f_n^m\|_p=0; \qtq{or} \lim_{n \to \infty} k_{n}^m \in \Z \,\qtq{and} \lim_{n \to \infty} j_{n}^m \in \N
$$
for each good $m$, and 
$$
\lim_{n \to \infty} \|f_n^m\|_p>0; \qtq{and} \lim_{n \to \infty} k_{n}^m \in \{\pm\infty\}\, \qtq{or} \lim_{n \to \infty} j_{n}^m =\infty,
$$
for each bad $m$.  We note that $m_0$ is still good (with $\lim_{n \to \infty} \|f_n^{m_0}\|_p>0$), and $m_1$ is still bad.  

For $M \geq 0$, we define
\begin{gather*}
    G_n^M :=\sum_{\textrm{good}\,m \leq M} f_n^m \qquad\qquad 
    B_n^M :=\sum_{\textrm{bad}\,m \leq M} f_n^m.
\end{gather*}
Therefore, $f_n = G_n^M+B_n^M+r_n^M$ for all $n$, with the summands on the right hand side having disjoint supports.  

We begin by showing that the $r_n^M$ are small in $L^p$.  By Lemma~\ref{L:chips}, 
$$\lim_{m \to \infty} \limsup_{n \rightarrow \infty} \|\scriptE r_n^m\|_{q} = 0.$$
Thus, 
\begin{equation} \label{E:rnm to 0 Lp}
\lim_{m \to \infty} \limsup_{n \rightarrow \infty} \|r_n^m\|_p = 0,
\end{equation}
for if not, 
$$
\frac{\|\scriptE(f_n-r_n^m)\|_{q}}{\|f_n-r_n^m\|_p} \geq \frac{\|\scriptE f_n\|_{q}-\|\scriptE r_n^m\|_{q}}{(\|f_n\|_p^p-\|r_n^m\|_p^p)^{1/p}} 
$$
would exceed $\mathbf{A}_{p,q} = \lim_n \frac{\|\scriptE f_n\|_{q}}{\|f_n\|_p}$, for some choice of $n,m$ sufficiently large, which contradicts our assumption that $\{f_n\}$ is maximizing. 

Next, we will show that 
\begin{equation} \label{E:good bad decoup Lq}
\lim_{n \to \infty} \|\scriptE(G_n^M+B_n^M)\|_{q}^{q} - \|\scriptE G_n^M\|_{q}^{q} - \|\scriptE B_n^M\|_{q}^{q} = 0,
\end{equation}
for all $M$.  It suffices to prove that for every good $m$ and bad $m'$, 
\begin{equation}\label{E:bilin good bad}
    \lim_{n \rightarrow \infty} \|\scriptE f_n^m \,\scriptE f_n^{m'}\|_{q/2} = 0.  
\end{equation}
If $\|f_n^M\|_p \to 0$, then \eqref{E:bilin good bad} follows from Cauchy--Schwarz.  Thus, we may assume that $k_n^m,j_n^m$ are eventually constant, while either $k_n^{m'}$ or $j_n^{m'}$ has an infinite limit.  In the former case, \eqref{E:bilin good bad} follows directly from the annular decoupling \eqref{E:bilinear annuli}, so we may assume that $k_n^{m'}$ is eventually constant and $j_n^{m'} \to \infty$.  We set $\frac1{p_1}:=\frac2p-\frac1{p_0}$, using a slightly smaller value of $p_0>p$ if $p$ is very close to 1.  We then have $1 <p_1<p<p_0$ and $\frac2{q(p)}=\frac1{q(p_0)}+\frac1{q(p_1)}$. (We recall the definition \eqref{eq:RCrange} of $q(p)$.)  By H\"older's inequality and the definition of the $f_n^m$, $\|f_n^m\|_{p_0}$ is bounded and 
$$
\|f_n^{m'}\|_{p_1} \lesssim \mu(\tau_{\sigma_n^{m'}})^{1/p_1-1/p}\|f_n^{m'}\|_p \lesssim 2^{-dj_n^{m'}(1/p_1-1/p)} \to 0.
$$
Another application of H\"older gives \eqref{E:bilin good bad}, and therefore \eqref{E:good bad decoup Lq}.  

Finally, by \eqref{E:rnm to 0 Lp} and \eqref{E:good bad decoup Lq},
\begin{align*}
    {\bf A}_{p,q}^{q} &= \lim_{M \to \infty} \lim_{n \to \infty} \|\scriptE G_n^M\|_{q}^{q} + \|\scriptE B_n^M\|_{q}^{q}\\
    &\leq {\bf A}_{p,q}^{q} \lim_{M \to \infty} \lim_{n \to \infty} \|G_n^M\|_p^{q} + \|B_n^M\|_p^{q}\\
    &={\bf A}_{p,q}^{q} \lim_{M \to \infty} \lim_{n \to \infty} \max\{\|G_n^M\|_p,\|B_n^M\|_p\}^{q-p} \leq {\bf A}_{p,q}^{q},
\end{align*}
where the last inequality follows from  $\|G_n^M\|_p^p + \|B_n^M\|_p^p \leq 1$.  Comparing the right and left sides, we see that all inequalities must be equalities.  However,  
\[
\lim_{M \to \infty} \lim_{n \to \infty} \|G_n^M\|_p^p \leq 1-\lim_{n \to \infty}\|f_n^{m_1}\|_p^p , \qquad \lim_{M \to \infty} \lim_{n \to \infty} \|B_n^M\|_p^p \leq 1-\lim_{n \to \infty}\|f_n^{m_0}\|_p^p,
\]
which are both less than 1, a contradiction.  Tracing back, the only possibility is that $m_1$ was not bad. 
Finally, we prove \eqref{E:one chunk} (with $\tilde f_n = f_n$).  By the triangle inequality and 
\[
\lim_{M \to \infty} \lim_{R \to \infty} \lim_{n \to \infty} \|r_n^M(1-\chi_{\{R^{-1} \leq |\xi| \leq R\}}\chi_{\{|f_n| \leq R\}})\|_p \leq \lim_{M \to \infty} \lim_{n \to \infty} \|r_n^M\|_p = 0,
\]
we have
\begin{align*}
&\lim_{R \to \infty} \lim_{n \to \infty} \|f_n(1-\chi_{\{R^{-1} \leq |\xi| \leq R\}}\chi_{\{|f_n| \leq R\}})\|_p \\
&\qquad = 
\lim_{M \to \infty} \lim_{R \to \infty} \lim_{n \to \infty} \|G_n^M(1-\chi_{\{R^{-1} \leq |\xi| \leq R\}}\chi_{\{|f_n| \leq R\}})\|_p.
\end{align*}
Considering a single good $f_n^m$,
\[
\lim_{R \to \infty} \lim_{n \to \infty} \|f_n^m(1-\chi_{\{R^{-1} \leq |\xi| \leq R\}}\chi_{\{|f_n| \leq R\}})\|_p = 0,
\]
because every subsequence (in $n$) has a further subsequence along which either $\|f_n^m\|_p \to 0$ or the parameters associated to the $\sigma_n^m$ remain bounded.  The proposition now follows from the triangle inequality.  
\end{proof}

\subsection{Spatial localization}

To obtain the spatial localization, we will apply the following simple consequence of the profile decomposition for the wave equation, which may be found in \cite{BG, Bulut, KM, Ra12}.

\begin{lemma}[Frequency localized $L^2$ profile decomposition] \label{L:localized profile}
Let $R>0$ and let $\{f_n\}$ be a sequence of measurable functions supported on $\{R^{-1} < |\xi| < R\}$ and obeying $|f_n| < R$.  Then there exist $\{\phi^j\}_{j \in \N} \subseteq L^2$ and $\{(t_n^j,x_n^j)\}_{n,j \in \N} \subseteq \R^{1+d}$ such that, after passing to a subsequence,
\begin{CI}
    \item $\phi^j = \wklim e^{i(t_n^j,x_n^j) \cdot (|\xi|,\xi)} f_n$
    \item $\lim_{n \to \infty} \bigl(|t_n^j-t_n^{j'}| + |x_n^j-x_n^{j'}|\bigr) = \infty$, $j \neq j'$
\end{CI}
and the remainder terms
\begin{equation} \label{E:wnj}
    w_n^J(\xi) := f_n(\xi) - \sum_{j=1}^J e^{-i(t_n^{j}, x_n^{j})\cdot (|\xi|, \xi)} \phi^{j}(\xi)
\end{equation}
obey 
\begin{CI}
\item $\lim_{J \to \infty} \limsup_{n \to \infty} \|\scriptE w_n^J\|_{\frac{2(d+1)}{d-1}} = 0$
\item $\lim_{n \to \infty} \|f_n\|_2^2 - \sum_{j=1}^J \|\phi^j\|_2^2 - \|w_n^J\|_2^2 = 0$, for all $1 \leq J < \infty$
\item $\lim_{n \to \infty} \|\scriptE f_n\|_{\frac{2(d+1)}{d-1}}^{\frac{2(d+1)}{d-1}} - \sum_{j=1}^J\|\scriptE \phi^j\|_{\frac{2(d+1)}{d-1}}^{\frac{2(d+1)}{d-1}} - \|\scriptE w_n^J\|_{\frac{2(d+1)}{d-1}}^{\frac{2(d+1)}{d-1}} = 0$,  $1 \leq J < \infty$.
\end{CI}
\end{lemma}

We note that the hypothesis that $|f_n| \leq R\chi_{\{R^{-1} \leq |\xi| \leq R\}}$ is harmless in view of Proposition~\ref{P:one chunk}.

\begin{proof}
The result is a direct application of the profile decomposition in (e.g.) \cite[Theorem 3.1]{Ra12}.  Indeed, setting
$$
(u_{0,n},u_{1,n}):= (\check{f_n},\tfrac{i}{|\nabla|}\check{f_n}),
$$
we see that $\scriptE f_n = S(t)(u_{0,n},u_{1,n})$, in the notation of the above-mentioned articles, and that, thanks to our frequency localization, $(u_{0,n},u_{1,n})$ is bounded in $\dot{H}^1 \times L^2$. Moreover, in the terminology of \cite{Bulut}, the sequence is ``$1$-oscillatory". Therefore, \cite[Lemma 3.8]{Bulut} yields $(V_0^j, V_1^j)$ such that $\phi^j = \widehat{V_0^j}$ satisfies all of our conclusions. The claim that $e^{i(t_n^j, x_n^j)\cdot(|\xi|, \xi)} f_n \rightharpoonup \phi^j$ for all $j$ follows from the construction of $V_0^j$ in \cite{Bulut}.
\end{proof}

\begin{proposition}[$L^p$ profile decomposition]\label{P:L^p decomp}
    Let $R > 0$. Let $f_n$ be measurable functions such that $\supp f_n \subset \{R^{-1} < |\xi| < R\}$ and $|f_n| < R$ almost everywhere. Then there exist a subsequence in $n$, sequences $(t_n^{j}, x_n^{j}) \in \R^{1+d}$ and bounded functions $\phi^{(j)}$ with $\supp \phi^{j} \subset \{|\xi| < R\}$ such that the following statements hold, with $w_n^J$ as in \eqref{E:wnj}.
    \begin{enumerate}
        \item $\lim_{n \rightarrow \infty} |x_n^{j} - x_n^{j'}| + |t_n^{j} - t_n^{j'}| = \infty$ for all $j \neq j'$;
        \item $\lim_{J \rightarrow \infty} \limsup_{n\rightarrow \infty} \|\scriptE w_n^J\|_q = 0$;
        \item $\liminf_{n \rightarrow \infty} \|f_n\|_p \geq (\sum_{j=1}^\infty \|\phi^{j}\|_p^{\tilde p})^{1/\tilde p} $ for $\tilde p = \max \{p, p'\}$;
        \item $\lim_{n\rightarrow \infty} \|\scriptE f_n\|_q^q - \sum_{j=1}^J \|\scriptE \phi^{j}\|_q^q - \|\scriptE w_n^J\|_q^q = 0$ for all $J$; and
        \item $e^{i(t_n^{j}, x_n^{j})\cdot (|\xi|, \xi)} f_n \rightharpoonup \phi^{j}$ as $n\to\infty$, for all $j$.
    \end{enumerate}
\end{proposition}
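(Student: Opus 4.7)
My plan is to reduce this to the $L^2$ profile decomposition in Theorem \ref{T:L^2 decomp}, using the pointwise bound $|f_n|<R$ and the Fourier-support bound $\supp f_n\subset\{R^{-1}<|\xi|<R\}$ as a frequency localization device. These uniform bounds make $\{f_n\}$ bounded in $L^2(\d\xi/|\xi|)$, so Theorem \ref{T:L^2 decomp} produces scales $\lambda_n^{(j)}$, translation parameters $(t_n^{(j)},x_n^{(j)})$, and profiles $\phi^{(j)}$. After passing to a further subsequence so that each $\lambda_n^{(j)}$ converges in $[0,\infty]$, I claim every profile with divergent or vanishing scale is zero: by \eqref{E:profiles are wk-lims}, $\phi^{(j)}$ is the weak-$L^2$ limit of $(S_n^{(j)})^{-1}f_n$, whose Fourier support is $\{R^{-1}\lambda_n^{(j)}<|\xi|<R\lambda_n^{(j)}\}$, so this support escapes to the origin or to infinity and the pairing against any Schwartz test function vanishes in the limit. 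For the surviving profiles I absorb the positive limit scale $\lambda^{(j)}\in(0,\infty)$ into $\phi^{(j)}$, whose Fourier support is then contained in $\{|\xi|<R\}$ and whose $L^\infty$ bound is inherited from the uniform bound on $f_n$; the resulting replacement error is absorbed into $w_n^J$ and vanishes with $n$, while $S_n^{(j)}$ reduces to a pure spacetime translation.

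Properties (1) and (5) are then inherited directly from their $L^2$ counterparts. For (2), Theorem \ref{T:L^2 decomp}(2) gives $\|\scriptE w_n^J\|_{2(d+1)/(d-1)}\to 0$, which I would upgrade to $\|\scriptE w_n^J\|_{q(p)}\to 0$ by log-convex interpolation. When $p<2$ so that $q(p)>2(d+1)/(d-1)$, the trivial bound $\|\scriptE w_n^J\|_\infty\le\|w_n^J\|_{L^1(\d\mu)}$ is uniformly bounded for fixed $J$ by the size and support bounds, and $q(p)$ is bracketed between the Strichartz exponent and $\infty$. When $p>2$ so that $q(p)<2(d+1)/(d-1)$, I use the hypothesized $L^{p_0}\to L^{q(p_0)}$ boundedness of $\scriptE$ as the anchor, and $q(p)$ is bracketed between $q(p_0)$ and the Strichartz exponent. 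Property (4) follows from (2) combined with the translation orthogonality in (1) via a standard almost-orthogonality argument based on Lemma \ref{L:bilinear annuli}: the factors $\scriptE\phi_n^{(j)}$ are rigid spacetime translates of $\scriptE\phi^{(j)}$ by $(t_n^{(j)},x_n^{(j)})$ with diverging relative displacements, so their cross products $\scriptE\phi_n^{(j)}\overline{\scriptE\phi_n^{(j')}}$ vanish in $L^{q/2}$ for every $j\ne j'$.

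Property (3) is the delicate point. Unlike in the $L^2$ setting, the profiles $\phi_n^{(j)}(\xi):=e^{-i(t_n^{(j)},x_n^{(j)})\cdot(|\xi|,\xi)}\phi^{(j)}(\xi)$ all share the common Fourier support $\{|\xi|<R\}$ and differ only by unimodular phases, so no pointwise Fourier-side disjointness is available. The input from property (1) is that the divergent oscillatory phases force $\phi_n^{(j)}\rightharpoonup 0$ weakly in $L^r(\d\xi/|\xi|)$ for every $1<r<\infty$ by Riemann--Lebesgue, and similarly for each fixed cross-pairing across distinct $j\neq j'$. For $p\ge 2$, uniform convexity of $L^p$ in the form of a Clarkson--Hanner--type inequality gives
\[
\|u+v_n\|_p^p \ge \|u\|_p^p + \|v_n\|_p^p - o_n(1)
\]
whenever $u$ is fixed and $v_n\rightharpoonup 0$ weakly in $L^p$, and iterating two profiles at a time over the truncation $\sum_{j\le J}\phi_n^{(j)}+w_n^J$ yields (3) with $\tilde p=p$. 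For $1<p<2$ the dual inequality produces the weaker exponent $\tilde p=p'>2$. The main obstacle I anticipate is controlling the tail of the profile series uniformly in $J$; this should be handled by the absolute $L^\infty$ and support bounds on $f_n$, which force $\sum_j\|\phi^{(j)}\|_p^{\tilde p}$ to converge unconditionally and validate the truncation to finitely many profiles.
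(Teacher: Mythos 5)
Your overall reduction is the same as the paper's: apply Theorem~\ref{T:L^2 decomp}, use the support and $L^\infty$ bounds together with the weak-limit characterization of the profiles to rule out degenerate dilation parameters and absorb the limiting dilations, and read off conclusions (1) and (5). The difficulties lie in your arguments for (3), and, secondarily, (2) and (4).

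For (3), the inequality you invoke, $\|u+v_n\|_p^p \ge \|u\|_p^p + \|v_n\|_p^p - o_n(1)$ for fixed $u$ and $v_n \rightharpoonup 0$ weakly in $L^p$, is false when $p>2$. Take $p=4$, $u\equiv 1$ on $[0,1]$, and let $v_n$ be mean-zero and rapidly oscillating, equal to $-10$ on a union of intervals of total measure $10^{-2}$ and to a small positive constant elsewhere: then $v_n\rightharpoonup 0$, $\|v_n\|_4$ is constant in $n$, and $\|u+v_n\|_4^4-\|u\|_4^4-\|v_n\|_4^4$ converges to a fixed negative number, the defect being driven by the cross term $p\int |v_n|^{p-2}v_n\bar u$, which weak convergence of $v_n$ does not kill. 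The example is uniformly bounded with fixed compact support, so the hypotheses on $f_n$ do not rescue the lemma; indeed $L^p$, $p\neq 2$, fails the Opial property, so no argument using only weak convergence can yield (3). And weak convergence is all you have for the remainder $w_n^J$ (no a.e.\ convergence is available, since the building blocks are pure modulations). This is exactly why the paper proves (3) by a different mechanism: smoothing near-projections $\pi_n^j$ (modulation, multiplication by $\varphi$, convolution with $\psi$), which upgrade the weak convergence of the modulated remainder to norm convergence via dominated convergence, together with the asymptotic bound $\|\Pi_n^J\|_{L^p\to\ell^{\tilde p}L^p}\le 1+o_n(1)$ obtained by interpolating trivial $L^1$ and $L^\infty$ bounds with an $L^2\to\ell^2L^2$ bound coming from non-stationary-phase decay of $\pi_n^{j'}(\pi_n^j)^*$; that interpolation is the source of $\tilde p=\max\{p,p'\}$.

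For (2), your interpolation needs an upper-endpoint bound that is uniform in $J$, not just finite for each fixed $J$: you obtain $\limsup_n\|\scriptE w_n^J\|_q\lesssim \epsilon_J^{\theta}\,C_J^{1-\theta}$ with $\epsilon_J:=\limsup_n\|\scriptE w_n^J\|_{2(d+1)/(d-1)}\to 0$ at no quantified rate, while $C_J$ (controlling $\|w_n^J\|_{L^1(\d\mu)}$ when $p<2$, or $\|w_n^J\|_{p_0}$ when $p>2$) is bounded only by the triangle inequality and may grow with $J$, since the profiles are summable merely in $\ell^2$ of their $L^2$ norms. For $p<2$ this can be repaired (Cauchy--Schwarz on the fixed compact support plus statement 3 of Theorem~\ref{T:L^2 decomp} gives a $J$-uniform bound on $\|w_n^J\|_{L^1(\d\mu)}$), but for $p>2$ no such fix is available along your lines. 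The paper's route is the reverse of yours: it proves (4) first, by Br\'ezis--Lieb, using the pointwise convergence of $\scriptE$ applied to the modulated differences (i.e.\ statement (5)), and it does so also at an auxiliary pair $(p_1,q(p_1))$; that decoupling is what furnishes the $J$-uniform bound $\limsup_n\|\scriptE w_n^J\|_{q(p_1)}\lesssim 1$ needed for the H\"older step giving (2). Your ordering is also problematic for (4) itself: (4) is asserted for every fixed $J$, where $\scriptE w_n^J$ need not be small, so it cannot follow from (2); your cross-term argument (which should not cite Lemma~\ref{L:bilinear annuli} --- that lemma separates dyadic annuli, not diverging spacetime translates at a fixed scale) handles only profile--profile interactions, while the profile--remainder interactions require the Br\'ezis--Lieb/pointwise-convergence input, as in the paper.
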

\begin{proof}
    Conclusions 1 and 5 follow immediately from Lemma~\ref{L:localized profile}.
    
    We may assume $p \neq 2$, as we are otherwise in the case covered by Lemma~\ref{L:localized profile}. Let $1 < p_1 < \tfrac{2d}{d-1}$ be such that $p$ lies strictly between $2$ and $p_1$. Conclusion 4 follows by the Br\'{e}zis--Lieb lemma and induction since $\scriptE(e^{i(t_n^1,x_n^1)\cdot(|\xi|,\xi)}f_n-\phi^1) \to 0$ pointwise and, for $j>1$, $\scriptE(e^{i(t_n^{j},x_n^{j})\cdot(|\xi|,\xi)}w_n^{j-1}-\phi^{j}) \to 0$ pointwise.  Moreover, conclusion 4 also holds just as well for the exponents $(p_1, q(p_1))$, since $f_n$ is bounded in $L^{p_1}$. By H\"older's inequality, conclusion 2 follows from conclusion 4 and the second conclusion in Lemma~\ref{L:localized profile}. 
    
    We need to work a little harder for conclusion 3. Let $\varepsilon > 0$ and $\varphi, \psi$ be smooth, nonnegative, compactly supported functions such that $\|\varphi\|_\infty = \int \psi = 1$ and $\|\phi^{j} - \psi * (\varphi\phi^{j})\|_p < \varepsilon$ for all $j \leq J$. We assume further that $\supp \psi \subset \{|\xi| < \tfrac{1}{2}R^{-1}\}$ and $\supp \varphi \subset \R^d \setminus \{0\}$. Let
    \[
        \pi_n^j(f) := e^{-i(t_n^{j}, x_n^{j})\cdot (|\xi|, \xi)} \psi * (\varphi e^{i(t_n^{j}, x_n^{j})\cdot (|\cdot|, \cdot)} f).
    \]
    Since $\psi*(\varphi\phi^{j})$ is compactly supported and bounded above by 1, conclusion 5 gives us
    \begin{multline*}
        \bigl(\sum_{j=1}^J \|\phi^{j}\|_p^{\tilde p}\bigr)^{1/\tilde p} \leq \bigl(\sum_{j=1}^J \|e^{-i(t_n^{j}, x_n^{j})\cdot(|\xi|, \xi)} \psi * (\varphi \phi^{j})\|_p^{\tilde p}\bigr)^{1/\tilde p} + o_{\varepsilon,J}(1) \\
        \leq \bigl(\sum_{j=1}^J \|\pi_n^j (f_n)\|_p^{\tilde p}\bigr)^{1/\tilde p}  + C_J \bigl(\sum_{j\neq j' \leq J} \|\pi_n^j e^{-i(t_n^{j'}, x_n^{j'})\cdot (|\xi|, \xi)} \phi^{j'}\|_p^{\tilde p}\bigr)^{1/\tilde p}  \\
        + \|\pi_n^J (w_n^J)\|_p + o_{\varepsilon,J}(1).
    \end{multline*}
    The middle two terms on the right-hand side go to zero as $n \rightarrow \infty$ by the dominated convergence theorem and integration by parts. Sending $\varepsilon \rightarrow 0$, the last term disappears as well, so it remains to estimate the first term as $n \rightarrow \infty$. Define the vector-valued functional
    \[
        \Pi_n^J(f) := (\pi_n^j(f))_{j=1}^J.
    \]
    To prove conclusion 3, it suffices to show that $\limsup_{n\rightarrow \infty} \|\Pi_n^J\|_{L^p \rightarrow \ell^{\tilde p} L^p} \leq 1$. 
    
    Since $\|\psi * (\varphi g)\|_s \leq \|g\|_s$ for all $1 \leq s \leq \infty$ and $g \in L^s$, we know that $\|\Pi_n^J\|_{L^1 \rightarrow \ell^\infty L^1}, \|\Pi_n^J\|_{L^\infty \rightarrow \ell^\infty L^\infty} \leq 1$. Therefore, by complex interpolation and duality, we need to prove that  
    \[
        \limsup_{n\rightarrow \infty} \|(\Pi_n^J)^* \mathbf{g}\|_2^2 \leq \|\mathbf{g}\|_2^2
    \]
    for all $\mathbf{g} = (g_1, \dots, g_J) \in (L^2)^J$. We expand
    \begin{multline*}
        \|(\Pi_n^J)^* \mathbf{g}\|_2^2 = \big\|\sum_j e^{-i(t_n^{j}, x_n^{j})\cdot (|\xi|, \xi)} \varphi (e^{i(t_n^{j}, x_n^{j})\cdot (|\cdot|, \cdot)} g_j * \psi)\big\|_2^2 \\
        \leq \sum_j \|(\pi_n^j)^* g_j\|_2^2 + \sum_{j\neq j'} \int |\pi_n^{j'} (\pi_n^j)^* g_j \overline{g_{j'}}| \leq \|\mathbf{g}\|_2^2 + \sum_{j \neq j'} \|\pi_n^{j'} (\pi_n^j)^* g_j\|_2 \|g_{j'}\|_2.
    \end{multline*}
    Finally, let $h_j := e^{i(t_n^{j}, x_n^{j})\cdot(|\xi|, \xi)} g_j$. Since $\varphi$ and $\psi$ are smooth and compactly supported and $\xi \mapsto (t_0, x_0)\cdot(|\xi|, \xi)$ has no critical points on $\tfrac{1}{2}R^{-1} < |\xi| < R$, stationary phase and conclusion 1 give us
    \begin{multline*}
        \|\pi_n^{j'} (\pi_n^j)^* g_j\|_2 \lesssim \|\psi * (e^{i(t_n^{j'} - t_n^{j}, x_n^{j'} - x_n^{j}) \cdot (|\xi|, \xi)} |\varphi|^2 (h_j * \psi))\|_\infty \\
        \lesssim (1+|(t_n^{j'} - t_n^{j}, x_n^{j'} - x_n^{j})|)^{-d/2} \rightarrow 0,
    \end{multline*}
    which proves conclusion 3.
\end{proof}

\begin{lemma}[One big bubble]\label{L:one big bubble}
    For every $\varepsilon > 0$ there exists $\delta > 0$ such that the following holds. For all sequences $\{f_n\}$ with $\supp f_n \subset \{R^{-1} < |\xi| < R\}$, $|f_n| < R$, $\|f_n\|_p = 1$ for all $n$, and $\lim_{n \rightarrow \infty} \|\scriptE f_n\|_q > (1-\delta){\bf A}_{p,q}$, there exist a subsequence in $n$, a bounded function $\phi$ supported on $\{|\xi| < R\}$, and a sequence $(t_n,x_n) \in \R^{1+d}$ such that
    \[
        \limsup_{n\rightarrow \infty} \|f_n - e^{-i(t_n,x_n)\cdot(|\xi|, \xi)} \phi\|_p < \varepsilon.
    \]
\end{lemma}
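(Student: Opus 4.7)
The plan is to apply the $L^p$ profile decomposition from Proposition~\ref{P:L^p decomp} to a subsequence of $\{f_n\}$, argue that the near-maximizing hypothesis forces essentially all of the mass to concentrate on a single profile, and upgrade the resulting weak convergence to near-strong $L^p$ convergence by exploiting the uniform convexity of $L^p$.

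First I apply Proposition~\ref{P:L^p decomp}, obtaining profiles $\phi^{(j)}$, translation parameters $(t_n^{(j)}, x_n^{(j)})$, and remainders $w_n^J$. Combining conclusions~2 and~4 of that proposition with the bound $\|\scriptE \phi^{(j)}\|_q \leq \mathbf{A}_{p,q} \|\phi^{(j)}\|_p$ and the hypothesis, I obtain
\begin{equation*}
(1-\delta)^q \mathbf{A}_{p,q}^q \leq \lim_{n \to \infty} \|\scriptE f_n\|_q^q = \sum_{j=1}^\infty \|\scriptE \phi^{(j)}\|_q^q \leq \mathbf{A}_{p,q}^q \sum_{j=1}^\infty \|\phi^{(j)}\|_p^q.
\end{equation*}
A direct computation using $q = \tfrac{d+1}{d-1} p'$ shows that $q > \tilde p := \max\{p, p'\}$ throughout the admissible range $1 < p < 2d/(d-1)$; hence H\"older's inequality and conclusion~3 give
\begin{equation*}
\sum_{j=1}^\infty \|\phi^{(j)}\|_p^q \leq \Bigl(\sup_j \|\phi^{(j)}\|_p\Bigr)^{q - \tilde p} \sum_{j=1}^\infty \|\phi^{(j)}\|_p^{\tilde p} \leq \Bigl(\sup_j \|\phi^{(j)}\|_p\Bigr)^{q - \tilde p}.
\end{equation*}
Together these yield $\sup_j \|\phi^{(j)}\|_p \geq (1-\delta)^{q/(q - \tilde p)}$. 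Since $\sum_j \|\phi^{(j)}\|_p^{\tilde p} \leq 1$ forces $\|\phi^{(j)}\|_p \to 0$, the supremum is attained at some finite index, which after reindexing I take to be $j=1$. Thus $\|\phi^{(1)}\|_p \geq 1 - \eta(\delta)$ with $\eta(\delta) \to 0$ as $\delta \to 0$.

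I then set $\phi := \phi^{(1)}$, $(t_n, x_n) := (t_n^{(1)}, x_n^{(1)})$, and $g_n := e^{i(t_n, x_n) \cdot (|\xi|, \xi)} f_n$. By conclusion~5, $g_n \rightharpoonup \phi$ weakly in $L^p$, while $\|g_n\|_p \equiv 1$. Applying Clarkson's inequality to the pair $(g_n, \phi)$ (the first inequality for $p \geq 2$, its dual version with exponent $p'$ for $1 < p < 2$), and invoking weak lower semicontinuity of $\|\cdot\|_p$ on the weakly convergent sequence $\tfrac12(g_n + \phi) \rightharpoonup \phi$, I derive
\begin{equation*}
\liminf_{n \to \infty} \|g_n - \phi\|_p \leq \Psi\bigl(\eta(\delta)\bigr),
\end{equation*}
for a continuous function $\Psi$ depending only on $p$ with $\Psi(0) = 0$. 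Passing to a further subsequence realizing this liminf converts it into a limit, and choosing $\delta$ small enough that $\Psi(\eta(\delta)) < \varepsilon$ completes the argument.

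The main obstacle is the final upgrade from weak $L^p$ convergence to the quantitative $L^p$ closeness asserted in the lemma. One cannot invoke the Br\'ezis--Lieb lemma directly, because weak $L^p$ convergence of uniformly bounded functions on a fixed bounded frequency support need not imply a.e.\ convergence; the uniform convexity of $L^p$ is the correct substitute, supplying a quantitative Radon--Riesz / Kadec--Klee property. The concentration-on-a-single-profile step is conceptually routine once the profile decomposition is in place, but hinges crucially on the strict inequality $q > \tilde p$, which fails precisely at the conjectural endpoint $p = 2d/(d-1)$.
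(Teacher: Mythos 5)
Your proposal is correct and follows essentially the same route as the paper: apply the $L^p$ profile decomposition of Proposition~\ref{P:L^p decomp}, combine conclusions 2--4 with $q>\tilde p$ to extract a single profile $\phi^{(j_0)}$ with $\|\phi^{(j_0)}\|_p\geq(1-\delta)^{q/(q-\tilde p)}$, and then upgrade the weak convergence from conclusion 5 to near-strong convergence via uniform convexity of $L^p$. The only difference is that you make explicit (via Clarkson's inequality and weak lower semicontinuity, which in fact yield a bound on the $\limsup$, not just the $\liminf$) the quantitative uniform-convexity step that the paper leaves implicit.
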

\begin{proof}
    Assume that we have chosen $\delta > 0$, whose value we will determine later. By statements 2 and 4 of Proposition \ref{P:L^p decomp},
    \begin{multline*}
        (1-\delta)^q {\bf A}_{p,q}^q \leq \lim_{n\to\infty} \|\scriptE f_n\|_q^q \leq \lim_{J \rightarrow \infty} \left(\sum_{j=1}^J \|\scriptE \phi^{j}\|_q^q + \limsup_{n\rightarrow \infty} \|\scriptE w_n^J\|_q^q\right) \\
        \leq \lim_{J\to\infty} {\bf A}_{p,q}^q \sum_{j=1}^J \|\phi^{j}\|_p^q \leq {\bf A}_{p,q}^q \sup_{j\in\N} \|\phi^{j}\|_p^{q-\tilde p} \lim_{J\to\infty} \sum_j \|\phi^{j}\|_p^{\tilde p} \\
        \leq {\bf A}_{p,q}^q \sup_{j\in\N} \|\phi^{j}\|_p^{q-\tilde p}.
    \end{multline*}
    Therefore, there exists $j_0 \in \N$ such that
    \[
        \|\phi^{(j_0)}\|_p \geq (1-\delta)^\frac{q}{q- \tilde p}.
    \]
    Since $e^{i(t_n^{(j_0)}, x_n^{(j_0)})\cdot(|\xi|, \xi)} f_n \rightharpoonup \phi^{(j_0)}$, as $n\to\infty$, by statement 5 from Proposition \ref{P:L^p decomp} and $q > \tilde p$ by definition, the uniform convexity of $L^p$ implies that we can take $\delta$ small enough to satisfy the claim.
\end{proof}

\begin{proof}[Proof of Theorem~\ref{T:extremizers exist intro}]
    Let $\{f_n\}\subset L^p(\frac{\textup d \xi}{|\xi|})$ be a non-zero sequence such that $\|\scriptE f_n\|_q \rightarrow {\bf A}_{p,q}$ and $\|f_n\|_p = 1$ for all $n$. By Proposition \ref{P:one chunk}, after passing to a subsequence and applying a sequence of symmetries to $f_n$, 
    \[
        \lim_{m \rightarrow \infty} \lim_{n\rightarrow \infty} \|\scriptE f_n^m\|_q/\|f_n^m\|_p = {\bf A}_{p,q},
    \]
    where $f_n^m := f_n \chi_{\{m^{-1} < |\xi| < m\}} \chi_{\{|f_n| < m\}}$ for all $m \in \N$.
    
    Lemma \ref{L:one big bubble} yields a bounded function $\phi^m$ such that after modulating the $f_n$ and passing to another subsequence,
    \[
        \lim_{m \rightarrow \infty} \limsup_{n\rightarrow \infty} \|f_n^m - \phi^m\|_p = 0.
    \]
    By the triangle inequality and Proposition \ref{P:one chunk}, we can drop the truncation of $f_n$ to find that
    \[
        \lim_{m \rightarrow \infty} \limsup_{n\rightarrow \infty} \|f_n - \phi^m\|_p = 0.
    \]
    Therefore, for all $\varepsilon > 0$ there exists $M \in \N$ such that, for every $m,m' > M$, $\|\phi^m - \phi^{m'}\|_p < \varepsilon$. Hence the sequence $\{\phi^m\}$ is Cauchy and thus converges to some $\phi \in L^p$. Since $f_n \rightarrow \phi$ in $L^p$, $\phi$ is a maximizer.
\end{proof}

        \section*{Acknowledgements}
GN and DOS were supported by the EPSRC New Investigator Award ``Sharp Fourier Restriction Theory'', grant no.\@ EP/T001364/1, and FCT/Portugal through project UIDB/04459/2020 with DOI identifier 10-54499/UIDP/04459/2020.
DOS acknowledges partial support from the Deutsche Forschungsgemeinschaft under Germany's Excellence Strategy -- EXC-2047/1 -- 390685813 and is grateful to René Quilodrán for valuable discussions during the preparation of this work.  BS and JT were supported by NSF DMS-1653264, NSF DMS-2246906, and the Wisconsin Alumni Research Foundation.  JT received additional support from NSF DMS-2037851.  The authors are grateful to the anonymous referee for valuable suggestions.

\appendix  

\section{The Penrose transform}\label{sec:penrose_tools}
The Penrose map is a classical conformal map of Minkowski spacetime~\cite{Pen64}. 
The associated Penrose transform has made previous appearances in sharp restriction theory \cite{GN20, Ne18,Ne22}; see also the recent survey~\cite[§5]{NOST22}.
The purpose of this appendix is to present in self-contained form all the background material on the Penrose transform that is  necessary to treat the Euler--Lagrange equation~\eqref{eq:EulerLagrange}. This  is mostly classical and has already been covered in the aforementioned papers using tools from conformal geometry. Here we shall follow an alternative route that avoids such tools, to the advantage of the more analytically minded reader. 

Given $d\geq 2$, parametrize $\mathbb S^{d}=\{X=(X_0,X_1, \ldots, X_d)\in\R\times\R^d: \sum_{i=0}^d X_i^2=1\}$ in spherical  coordinates, by letting $\vec X=(X_1, \ldots, X_d)$ and
\[X=(X_0,\vec X)=(\cos R, \omega\sin R), \,\,\,R\in[0,\pi],\,\omega\in\mathbb S^{d-1}.\]
On $\mathbb R^d$, introduce polar coordinates $r\ge 0$ and $\omega\in \mathbb S^{d-1}$. We then define the {\it Penrose map} $\mathcal P:\R^{1+d}\to[-\pi,\pi]\times\mathbb{S}^d$ via $\mathcal P(t,r\omega)=(T,\cos R,\omega\sin R)$, where 
\begin{equation} \label{eq:PenroseEquations}
    \begin{split}
        T&=\arctan(t+r)+\arctan(t-r),\\
        R&=\arctan(t+r)-\arctan(t-r), \\
        \omega&=\omega.
    \end{split}
\end{equation}
The inverse map can then be concisely described as follows:  
\begin{equation}\label{eq:inverse_Penrose_Map}
    t\pm r =\tan\left(\frac{T\pm R}{2}\right).
\end{equation}
The range of $\mathcal P$ is often called the \textit{Penrose diamond}, given by
\begin{equation}\label{eq:PenroseDiamond}\notag
    \mathcal{P}(\mathbb R^{1+d})=\{ (T, \cos R, \omega \sin R)\ :\ 0\le R<\pi-\lvert T\rvert \}.
\end{equation}
We define the \textit{conformal factor}\footnote{For the link with conformal geometry, see~\cite[Appendix A.4]{Hor94}.}
\begin{equation}\label{eq:Omega}
\Omega:=\frac{2}{(1+(t+r)^2)^{\frac12}(1+(t-r)^2)^{\frac12}}=\cos T+\cos R.
\end{equation}
The pushforward via $\mathcal{P}$ of the volume element $\d t\d x$ of $\mathbb{R}^{1+d}$ can then be conveniently expressed as
\begin{equation}\label{eq:pushfoward_measure}
    \Omega^{d+1}\d t\d x = \d T\d\sigma,
\end{equation}
where $\d\sigma$ denotes the usual surface measure on $\mathbb S^d$.
To verify~\eqref{eq:pushfoward_measure}, note that the measure $\d T\d\sigma$ satisfies the recursive relation
\begin{equation}\label{eq:factorize_dsigma}\notag
    \d T\d\sigma_{\mathbb S^d}(\cos R, \omega\sin R)=(\sin R)^{d-1}\d T\d R\d\sigma_{\mathbb S^{d-1}}(\omega),
\end{equation}
from which~\eqref{eq:pushfoward_measure} follows directly.

The following result describes the effect of the Penrose map on the d'Alembertian. 
\begin{lemma}\label{lem:main_penrose}
    Given $U\in C^\infty(\mathcal P(\mathbb R^{1+d}))$, define 
    \begin{equation}\label{eq:Penrose_Transform_Spacetime}
        \begin{array}{cc}
            u(t, x):=\Omega^{\frac{d-1}{2}}U(T, X), & \text{ where }(T, X)=\mathcal P(t, x).
        \end{array}
    \end{equation}
    Then $u\in C^\infty(\mathbb R^{1+d})$ and 
    \begin{equation}\label{eq:DAlembertPushforward}
        \left(\partial_t^2-\Delta\right) u(t, x)= \Omega^{\frac{d+3}{2}}\cdot \left( \partial_T^2-\Delta_{\mathbb S^d}+\frac{(d-1)^2}{4}\right) U (T, X).
    \end{equation}
\end{lemma}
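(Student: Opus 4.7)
The identity \eqref{eq:DAlembertPushforward} is essentially the conformal covariance of the wave operator (see \cite[Appendix A.4]{Hor94}), but in keeping with the self-contained spirit of the appendix I would prove it by a direct coordinate computation. The key idea is to work in null coordinates $u_\pm := t \pm r$ on $\mathbb R^{1+d}$ and $V_\pm := T \pm R$ on the Penrose diamond. In these variables, \eqref{eq:PenroseEquations} and \eqref{eq:inverse_Penrose_Map} decouple as $V_\pm = 2\arctan u_\pm$, i.e.\ $u_\pm = \tan(V_\pm/2)$. Setting $a := 1+u_+^2$ and $b := 1+u_-^2$, direct computation gives $\Omega = 2/\sqrt{ab}$ and, using the sum-to-product formulas applied to $\cos R = \cos((V_+-V_-)/2)$ and $\sin R = \sin((V_+-V_-)/2)$,
\[
\cos R = \frac{1+u_+u_-}{\sqrt{ab}}, \qquad \sin R = \frac{u_+-u_-}{\sqrt{ab}} = r\,\Omega.
\]

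From $\partial_{u_+} = (2/a)\,\partial_{V_+}$ and $\partial_{u_-} = (2/b)\,\partial_{V_-}$, together with $\partial_t^2 - \partial_r^2 = 4\partial_{u_+}\partial_{u_-}$ and its analogue on the Einstein cylinder, one immediately gets the scalar identity $\partial_t^2 - \partial_r^2 = \Omega^2(\partial_T^2 - \partial_R^2)$ acting on functions of $(T,R)$. The angular piece transforms for free because $\Omega$ and the Penrose map commute with $\omega$: $r^{-2}\Delta_{\mathbb S^{d-1}} = \Omega^2 \sin^{-2}R\,\Delta_{\mathbb S^{d-1}}$. Combining these with the spherical formula $\Delta_{\mathbb S^d} = \partial_R^2 + (d-1)\cot R\,\partial_R + \sin^{-2}R\,\Delta_{\mathbb S^{d-1}}$, the proof reduces to computing the first- and zeroth-order contributions generated by the conformal weight $\phi := \Omega^{(d-1)/2}$.

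Writing $u = \phi U$, a key observation is that $\phi = 2^{(d-1)/2}\,\tilde A(u_+)\tilde B(u_-)$ with $\tilde A(s) = (1+s^2)^{-(d-1)/4}$, so that the logarithmic derivatives factor as $\tilde A'/\tilde A = -(d-1)u_+/(2a)$ and $\tilde B'/\tilde B = -(d-1)u_-/(2b)$. Expanding
\[
\bigl(\partial_t^2 - \partial_r^2 - \tfrac{d-1}{r}\partial_r\bigr)(\phi U) = 4\partial_{u_+}\partial_{u_-}(\phi U) - \tfrac{d-1}{r}(\partial_{u_+}-\partial_{u_-})(\phi U)
\]
via Leibniz and grouping terms by the derivative of $U$ they multiply, three algebraic identities drive the computation: $u_+b - u_-a = (u_+-u_-)(1-u_+u_-) = 2r(1+r^2-t^2)$ (using $u_+u_- = t^2-r^2$), $a+b = 2(1+t^2+r^2)$, and $u_++u_- = 2t$. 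With these, the coefficient of $\partial_T U$ cancels identically, the coefficient of $\partial_R U$ reduces to $-(d-1)\cot R \cdot \Omega^{(d+3)/2}$ (after invoking $\cot R = (1+t^2-r^2)/(2r)$), and the coefficient of $U$ reduces to $(d-1)^2/4 \cdot \Omega^{(d+3)/2}$. Adding the contribution $-r^{-2}\Delta_{\mathbb S^{d-1}}(\phi U) = -\Omega^{(d+3)/2}\sin^{-2}R\,\Delta_{\mathbb S^{d-1}} U$ then reassembles the spherical Laplacian and yields \eqref{eq:DAlembertPushforward}.

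The main obstacle is purely bookkeeping: one must verify that the cross terms from $4\partial_{u_+}\partial_{u_-}(\phi U)$ and from $-\frac{d-1}{r}\partial_r(\phi U)$ conspire to produce \emph{exactly} the first-order spherical term $-(d-1)\cot R\,\partial_R U$ and leave the residual mass $(d-1)^2/4$, rather than a $t$- or $r$-dependent potential. This cancellation is the quantitative reason why the correct conformal weight for the wave operator in $1+d$ dimensions is $(d-1)/2$, and it is what allows the Penrose transform to play the clean role in sharp restriction theory exploited throughout \S\ref{sec:critical_points}.
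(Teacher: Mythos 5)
Your proposal is correct and follows essentially the same route as the paper: a direct computation in the null coordinates $t\pm r$, $T\pm R$, using the factorization $\Omega=2(1+(t+r)^2)^{-1/2}(1+(t-r)^2)^{-1/2}$ to get $\partial_t^2-\partial_r^2=\Omega^2(\partial_T^2-\partial_R^2)$ and then tracking the first- and zeroth-order terms produced by the weight $\Omega^{(d-1)/2}$ (your asserted cancellations --- vanishing $\partial_T U$ coefficient, $-(d-1)\cot R\,\partial_R U$, and the mass $(d-1)^2/4$ --- do check out, e.g.\ via $2ru_-+b=1+u_+u_-=2r\cot R$). The only differences are organizational: the paper first rewrites the radial d'Alembertian as an operator identity in $(T,R)$ before applying it to $\Omega^{(d-1)/2}U$ and reduces to zonal $U=U(T,R)$ by a remark, whereas you expand the product directly via Leibniz with the factorized weight and treat the angular term $r^{-2}\Delta_{\mathbb S^{d-1}}=\Omega^2\sin^{-2}R\,\Delta_{\mathbb S^{d-1}}$ explicitly, which is if anything slightly more complete on that point.
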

\begin{proof}
    We need to prove the following identity:
    \begin{equation}\label{eq:EquivDAlembertPushforward}
        (\partial^2_t-\Delta)\left[(\Omega^{\frac{d-1}{2}} U)|_{(T, X)=\mathcal{P}(t, x)}\right]=\Omega^\frac{d+3}{2}\cdot \left(\partial^2_T - \Delta_{\mathbb S^d} +\frac{(d-1)^2}{4}\right)U.
    \end{equation}
     Since $\mathcal{P}$ leaves $\omega$ invariant, no generality is lost in assuming $U=U(T, R)$, or equivalently $u=u(t, r)$. To begin the proof of~\eqref{eq:EquivDAlembertPushforward}, we first compute the expression of the operator $\partial_t^2-\Delta=\partial_t^2- \partial^2_r -\frac{d-1}{r}\partial_r$ in the coordinates $(T, R)$, and claim that
\begin{equation}\label{eq:DAlembertInPenrose}
        \partial_t^2- \partial^2_r -\frac{d-1}{r}\partial_r= \Omega^2\cdot(\partial_T^2-\partial_R^2)-(d-1)\Omega\cdot\left( \frac{1+\cos R \cos T}{\sin R} \partial_R -\sin T\partial_T\right).
    \end{equation}
    This is most easily verified by first observing that $\partial_t^2-\partial_r^2=4\partial_{t+r}\partial_{t-r}$, so by~\eqref{eq:inverse_Penrose_Map} and the fact that $\Omega=\cos T+\cos R=2\cos\left(\frac{T+R}{2}\right)\cos\left(\frac{T-R}{2}\right)$,
    \begin{equation*}
        \begin{split}
            4\partial_{t+r}\partial_{t-r}=16\cos^2\left(\frac{T+R}{2}\right)\cos^2\left(\frac{T-R}{2}\right)\partial_{T+R}\partial_{T-R} 
        =\Omega^2\cdot(\partial_T^2-\partial_R^2).
        \end{split}
    \end{equation*}
    To handle the remaining term $\frac{d-1}{r}\partial_r$, we use~\eqref{eq:inverse_Penrose_Map} to compute  
    \begin{equation*}
        \frac{d-1}{r}=\frac{2(d-1)}{\tan\left(\frac{T+R}{2}\right)-\tan\left(\frac{T-R}{2}\right)} =\frac{(d-1)\Omega}{\sin R};
    \end{equation*}
    %
    %
     on the other hand, by~\eqref{eq:PenroseEquations},
    \begin{equation*}\label{eq:partial_r}
        \begin{split}
            \frac{\partial}{\partial r}=\frac{\partial T}{\partial r}\frac{\partial}{\partial T} + \frac{\partial R}{\partial r}\frac{\partial}{\partial R}    
            =-\sin R \sin T \frac{\partial}{\partial T} +(1+\cos R\cos T)\frac{\partial}{\partial R},
        \end{split}
    \end{equation*}
    from which~\eqref{eq:DAlembertInPenrose} follows at once.
    %
    %
    %
    %
    %

    To complete the proof of~\eqref{eq:EquivDAlembertPushforward},  we apply the operator on the right-hand side of~\eqref{eq:DAlembertInPenrose} to the function $\Omega^\frac{d-1}{2}U=(\Omega^\frac{d-1}{2}U)(T, R)$. A lenghty but routine computation reveals that
    \begin{equation}\label{eq:tedious_but_straightforward}
        \begin{split}
            &\Omega^2\cdot(\partial_T^2-\partial_R^2)(\Omega^\frac{d-1}{2}U)-(d-1)\Omega\cdot\left( \tfrac{1+\cos R \cos T}{\sin R} \partial_R -\sin T\,\partial_T\right)(\Omega^\frac{d-1}{2}U) \\
            &= \Omega^\frac{d+3}{2}\cdot\left(\partial_T^2 - \partial_R^2 -(d-1)\cot R \,\partial_R +\tfrac{(d-1)^2}{4}\right) U,
        \end{split}
    \end{equation}
    where $\cot$ denotes the cotangent function. By the assumption $U=U(T, R)$, the spherical Laplacian reads 
    \begin{equation*}
        \Delta_{\mathbb S^d}U=(\sin R)^{1-d}\partial_R((\sin R)^{d-1}\partial_R U) = \partial_R^2U +(d-1)\cot R\,\partial_R U, 
    \end{equation*}
    and so we see that the right-hand side of~\eqref{eq:tedious_but_straightforward} coincides with the left-hand side of~\eqref{eq:EquivDAlembertPushforward}. This concludes the proof of the lemma.
    %
    %
\end{proof}

We wish to apply Lemma \ref{lem:main_penrose} to the half-wave propagator $u=e^{itD}g$, defined in \eqref{eq:HalfWaveProp}. Note that $g=u|_{t=0}$. From~\eqref{eq:PenroseEquations} it is immediate that $t=0$ if and only if $T=0$, in which case 
\begin{equation}\label{eq:stereo_projection}
    \begin{array}{cc}\displaystyle
        \cos R=\cos(2\arctan r)=\frac{1-r^2}{1+r^2} \,\, \text{ and }\sin R=\frac{2r}{1+r^2}.
    \end{array}
\end{equation}
These equations coincide with those for the stereographic projection of $\mathbb R^d$ onto $\mathbb S^d\setminus \{(-1, \vec{0})\}$, which can  be  rewritten as $x=\vec{X}/(1+X_0)$.
Next we define
\begin{equation}\label{eq:OmegaZero}   \notag
    \Omega_0:=\Omega\lvert_{t=0}=\frac2{1+r^2}=1+\cos R,
\end{equation}
and  introduce the spherical fractional operator
\begin{equation}\label{eq:SphFracOp}
    D_{\mathbb S^d}:=\left(-\Delta_{\mathbb S^d}+\frac{(d-1)^2}{4}\right)^{\frac12}.
\end{equation} 
We recognize $D_{\mathbb S^d}^2$ as the spatial part of the spherical d'Alembertian on the right-hand side of~\eqref{eq:DAlembertPushforward}.  
    The action of $D_{\mathbb S^d}$ on a  spherical harmonic $Y_\ell$ of degree $\ell$ on $\mathbb S^d$  is 
    \begin{equation}\label{eq:D_S_definition}
    D_{\mathbb S^d}Y_\ell=\left(\ell+\frac{d-1}{2}\right)Y_\ell,
\end{equation}
simply because $-\Delta_{\mathbb S^d}Y_\ell=\ell(\ell+d-1)Y_\ell$.
Thus we  define the propagator $e^{iTD_{\mathbb S^d}}$ via
\begin{equation}\label{eq:SHexpansion}
e^{iTD_{\mathbb S^d}}Y_\ell(X)=e^{iT(\ell+\frac{d-1}2)} Y_{\ell}(X).
\end{equation}

Having settled these preliminaries, we proceed to define the Penrose transform.
\begin{definition}\label{def:Penrose_Transform}
    Given $G\in C^\infty(\mathbb S^d)$, define $g\in C^\infty(\mathbb R^d)$ by
    \begin{equation}\label{eq:PenroseTransformSpaceonly}
       g(r\omega)=\Omega_0^{\frac{d-1}{2}}G(X)  ,
    \end{equation}
    where $X=(\cos R, \omega \sin R)$ with $(\cos R, \sin R)$ given by~\eqref{eq:stereo_projection}. We call $g$ the \textit{Penrose transform} of $G$.
\end{definition}
\begin{remark}\label{rem:foschian_is_constant}
    If $G$ is the constant function $\mathbf 1$ on $\mathbb S^d$, then its Penrose transform is precisely the function $g_\star$ defined in~\eqref{eq:Foschian_Physical}.
\end{remark}
\noindent Note that~\eqref{eq:PenroseTransformSpaceonly} coincides with the evaluation of~\eqref{eq:Penrose_Transform_Spacetime} at $t=0$. The following result is known in conformal geometry as an \textit{intertwining law} \cite[eq.~(1.1)]{Gon13}.
\begin{lemma}\label{lem:morpurgo}
Let $g$ and $G$ be related as in Definition \ref{def:Penrose_Transform}. Then
   \begin{equation}\label{eq:morpurgo}
     Dg(x)=\Omega_0^{\frac{d+1}2} D_{\mathbb S^d}G (X).
    \end{equation}
\end{lemma}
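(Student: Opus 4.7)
The plan is to lift the first-order identity \eqref{eq:morpurgo} to the spacetime intertwining of Lemma \ref{lem:main_penrose} and then read it off as a $t$-derivative at the initial time. Given $G\in C^\infty(\mathbb S^d)$ with Penrose transform $g$, I would define $U(T,X) := e^{iTD_{\mathbb S^d}}G(X)$ via \eqref{eq:SHexpansion} applied to the spherical harmonic expansion of $G$, so that $U$ satisfies the half-wave equation $\partial_T U = iD_{\mathbb S^d} U$ with $U|_{T=0} = G$. Setting $u(t,x) := \Omega^{(d-1)/2}(t,x)\, U(\mathcal P(t,x))$, Lemma \ref{lem:main_penrose} then gives $(\partial_t^2-\Delta)u = 0$, while the defining identity \eqref{eq:PenroseTransformSpaceonly} evaluated at $t=0$ yields $u|_{t=0} = g$.

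Next I would compute $\partial_t u|_{t=0}$ by the chain rule. Differentiating \eqref{eq:PenroseEquations} and \eqref{eq:Omega} in $t$ and evaluating at $t=0$ gives $T=0$, $\partial_t T = \Omega_0$, $\partial_t R = 0$ (hence $\partial_t X = 0$), and $\partial_t \Omega = 0$. All terms in the chain rule except one thereby vanish, leaving
\begin{equation*}
\partial_t u\big|_{t=0} = \Omega_0^{(d-1)/2}\cdot \Omega_0 \cdot \partial_T U\big|_{T=0} = i\,\Omega_0^{(d+1)/2}\,D_{\mathbb S^d} G.
\end{equation*}

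To conclude, it suffices to identify $u$ with the half-wave propagator $e^{itD}g$, for then $\partial_t u|_{t=0} = iDg$ and cancelling the factor of $i$ in the display above gives exactly \eqref{eq:morpurgo}. The main obstacle is precisely this positive-frequency identification, since $u$ and $e^{itD}g$ are both solutions of the free wave equation with identical initial value $g$ but \emph{a priori} different initial velocities. I would handle it mode-by-mode: for each spherical harmonic $Y_\ell$, the single mode $\Omega^{(d-1)/2}e^{iT(\ell+(d-1)/2)}Y_\ell(X)$ is a conformal pullback of a positive-frequency eigenfunction on the Einstein cylinder, which the time-orientation-preserving Penrose map sends to a distribution whose spacetime spectrum lies on the forward light cone $\{\tau=|\xi|\}$ -- this is the classical conformal covariance of the half-wave propagator; see e.g.\ \cite{Gon13, Ne18}. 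As a fallback, one may verify \eqref{eq:morpurgo} directly on each zonal spherical harmonic via a Hankel transform computation, using that the Penrose transform of $C_\ell^{(d-1)/2}$ is a radial function on $\mathbb R^d$ whose $D$-action can be evaluated explicitly.
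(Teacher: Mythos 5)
Your reduction is correct as far as it goes: setting $u(t,x)=\Omega^{\frac{d-1}{2}}\,e^{iTD_{\mathbb S^d}}G(X)$ with $(T,X)=\mathcal P(t,x)$, Lemma~\ref{lem:main_penrose} gives $(\partial_t^2-\Delta)u=0$, the restriction $u|_{t=0}=g$ follows from \eqref{eq:PenroseTransformSpaceonly}, and your chain-rule computation $\partial_t T|_{t=0}=\Omega_0$, $\partial_t R|_{t=0}=0$, $\partial_t\Omega|_{t=0}=0$, hence $\partial_t u|_{t=0}=i\,\Omega_0^{\frac{d+1}{2}}D_{\mathbb S^d}G$, is accurate (it is the same computation the paper performs inside the proof of Proposition~\ref{thm:main_penrose}). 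The problem is the step you yourself flag as the main obstacle. The identification $u=e^{itD}g$ is precisely the content of Proposition~\ref{thm:main_penrose}, which the paper proves \emph{using} Lemma~\ref{lem:morpurgo} (the lemma supplies the initial velocity needed for the energy-uniqueness argument on the Penrose diamond). Worse, given your chain-rule computation, the forward-cone support of $u$ is logically \emph{equivalent} to the lemma: the negative-frequency part of $u$ vanishes if and only if $\partial_t u|_{t=0}=iD g$, i.e.\ if and only if \eqref{eq:morpurgo} holds. So asserting "classical conformal covariance of the half-wave propagator" with a pointer to \cite{Gon13, Ne18} is not a proof but a restatement of the claim: \cite{Gon13} is cited in this paper exactly for the intertwining law you are asked to prove, and \cite{Ne18} establishes these facts with the conformal-geometric machinery the appendix is explicitly written to avoid. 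A genuine argument here (e.g.\ holomorphic extension of positive-frequency solutions to the forward tube and invariance of the tube under the complexified Penrose map) is nontrivial and is missing.

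The fallback is also insufficient as stated: verifying \eqref{eq:morpurgo} on \emph{zonal} spherical harmonics does not prove it for general $G\in C^\infty(\mathbb S^d)$, because under the $SO(d)$ symmetry that the stereographic setup actually possesses, the degree-$\ell$ harmonics on $\mathbb S^d$ split into many non-zonal pieces; reducing general harmonics to zonal ones would require equivariance under all rotations of $\mathbb S^d$, which is exactly the hidden symmetry the lemma is meant to produce. For comparison, the paper avoids all of this by inverting the statement: it writes $D^{-1}$ as the Riesz potential with kernel $c_d|x-z|^{1-d}$, uses the conformal identities $\Omega_0^d\,\d z=\d\sigma(Z)$ and $\Omega_0(x)\Omega_0(z)|x-z|^2=|X-Z|^2$ to transplant the kernel to $\mathbb S^d$, and then — since the transplanted kernel depends only on $X\cdot Z$ — applies Funk--Hecke together with the Rodrigues formula to show every spherical harmonic $Y_\ell$ is an eigenfunction with eigenvalue $(\ell+\frac{d-1}{2})^{-1}$. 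If you want to salvage your spacetime approach, you must either supply a self-contained proof that the Penrose pullback preserves the positive-frequency condition, or replace the fallback by a computation covering all $SO(d)$-types, not just the zonal ones.
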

\begin{proof}
     Letting $h=Dg$, we will prove the following equivalent version of~\eqref{eq:morpurgo}:
    \begin{equation}\label{eq:inverse_morpurgo}\notag
         D^{-1}h(x)=\Omega_0^{\frac{d-1}{2}}D_{\mathbb S^d}^{-1}\tilde H(X), 
    \end{equation}
    where $\tilde H(X)=\Omega_0^{-\frac{d+1}{2}}(x) h(x)$, with $X=(\cos R, \omega \sin R)$ and $(\cos R, \sin R)$ given by the stereographic projection~\eqref{eq:stereo_projection}. It is classical~\cite[Def.~2.11]{Kwa15} that
\begin{equation}\label{eq:fractional_integral}
        D^{-1}h(x)=c_d \int_{\mathbb R^d} \frac{h(z)}{\lvert x-z\rvert^{d-1}}\, \d z, \,\,\,\text{where } c_d=\frac{\Gamma\left(\frac{d-1}{2}\right)}{2\pi^\frac{d+1}{2}}.
    \end{equation}
   Letting  $X$ and $Z$ denote the stereographic projection~\eqref{eq:stereo_projection} of $x$ and $z$, respectively, we note that 
    \begin{equation}\label{eq:ChordalDistanceFormula}\notag
        \begin{array}{cc}
            \Omega_0^d\, \d z= \d\sigma(Z), &  \Omega_0(x)\Omega_0(z)\lvert x-z\rvert^2=\lvert X-Z\rvert^2;
        \end{array}
    \end{equation}
    see \cite[§4.4]{LiLoBook}. The right-hand side of~\eqref{eq:fractional_integral} thus equals 
    \begin{equation}\label{eq:fractional_integral_changed}\notag
        c_d\Omega_0(X)^\frac{d-1}{2} \int_{\mathbb S^d} \frac{ \tilde{H}(Z)}{\lvert X-Z\rvert^{d-1}}\, \d\sigma(Z),
    \end{equation}
    and so it suffices to prove that 
    \begin{equation}\label{eq:fractional_integral_remains}
        c_d\int_{\mathbb S^d}\frac{ \tilde{H}(Z)}{\lvert X-Z\rvert^{d-1}}\, \d\sigma(Z) = D_{\mathbb S^d}^{-1}\tilde{H}(X).
    \end{equation}
    It is enough to verify~\eqref{eq:fractional_integral_remains} for $\tilde{H}=Y_\ell$, a spherical harmonic of degree $\ell\geq 0$. From~\eqref{eq:D_S_definition} it follows that $D_{\mathbb S^d}^{-1} Y_\ell = (\ell+\frac{d-1}{2})^{-1} Y_\ell$. Letting $t=X\cdot Z$, we have $\lvert X-Z\rvert^{1-d}=2^{\frac{1-d}{2}}(1-t)^{\frac{1-d}{2}}$. By the theorem of Funk--Hecke~\cite[Ch.~1, §4]{Mu98}, 
    \begin{equation}\label{eq:funk_hecke_application}\notag
        \begin{split}
            c_d\int_{\mathbb S^d}& \frac{Y_\ell(Z)}{\lvert X-Z\rvert^{d-1}}\, \d\sigma(Z)  = \lambda_{\ell, d} Y_\ell(X), \text{ where}\\ 
            \lambda_{\ell, d}&:=c_d2^{\frac{1-d}{2}}\lvert\mathbb S^{d-1}\rvert \int_{-1}^1 \frac{C_\ell^\frac{d-1}{2}(t)}{C_\ell^\frac{d-1}{2}(1)}(1-t)^{\frac{1-d}{2}}(1-t^2)^\frac{d-2}{2}\, \d t,
        \end{split}
    \end{equation}
    and $C_\ell^\frac{d-1}{2}$ denotes the Gegenbauer polynomial introduced in~\eqref{eq:Gegenbauer}. The proof will  be complete once we show that $\lambda_{\ell, d}=(\ell+\frac{d-1}{2})^{-1}$. Noting that $c_d\lvert\mathbb S^{d-1}\rvert=\Gamma(\tfrac{d-1}{2})/(\Gamma(\tfrac d 2)\pi^\frac12)$ and applying  Lemma~\ref{lem:Rodrigues}, we compute:
    \begin{equation}\label{eq:compute_via_rodrigues}\notag 
        \begin{split}
            \lambda_{\ell, d}&= \frac{2^{\frac{1-d}2-\ell} \Gamma(\tfrac{d-1}2)}{\pi^\frac12\Gamma(\ell+\tfrac d 2)}\int_{-1}^1(1-t^2)^{\ell+\frac{d-2}{2}}\frac{\d^\ell}{\d t^\ell} (1-t)^{\frac{1-d}{2}}\, \d t \\
            &=\frac{2^{\frac{1-d}2-\ell} \Gamma(\tfrac{d-1}2)}{\pi^\frac12\Gamma(\ell+\tfrac d 2)}\left(\frac{d-1}{2}\right)^{(\ell)}\int_{-1}^1(1-t^2)^{\ell+\frac{d-2}{2}}(1-t)^{\frac{1-d}{2}-\ell}\, \d t\\
            &=\frac{ \Gamma(\tfrac{d-1}2)}{\pi^\frac12\Gamma(\ell+\tfrac d 2)}\left(\frac{d-1}{2}\right)^{(\ell)}\mathrm{B}\left(\ell+\frac{d}{2}, \frac12\right) \\
            &= \frac{\Gamma(\tfrac{d-1}{2})}{\Gamma(\ell+\tfrac{d+1}{2})}\left(\frac{d-1}{2}\right)^{(\ell)}=\left(\ell+\frac{d-1}{2}\right)^{-1},
        \end{split}
    \end{equation}
    where we have used the notation $a^{(\ell)}=\prod_{k=1}^\ell (a+k-1)$ for the rising factorial and the well-known formula $\mathrm{B}(x, y)=\Gamma(x)\Gamma(y)/\Gamma(x+y)$ for the Beta function. This concludes the proof of the lemma.
\end{proof}
\noindent We can finally state and prove the key property of the Penrose transform. 
\begin{proposition}
\label{thm:main_penrose}
    For every $(t, x)\in\mathbb R^{1+d}$,
    \begin{equation}\label{eq:propagator_equation}\notag
        e^{itD}g(x)=  \Omega^{\frac{d-1}{2}}e^{iT D_{\mathbb S^d}} G(X),
    \end{equation}
    where $(T, X)=\mathcal P(t, x)$.
\end{proposition}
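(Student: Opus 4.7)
The strategy is to show that both sides of the identity solve the same Cauchy problem for the second-order wave equation $(\partial_t^2 - \Delta)w = 0$, and then invoke uniqueness. Write $v(t,x) := e^{itD}g(x)$ and $u(t,x) := \Omega(t,x)^{(d-1)/2}\, U(T,X)$ with $U := e^{iTD_{\mathbb{S}^d}}G$ and $(T,X) = \mathcal{P}(t,x)$. Since $v$ solves the half-wave equation $\partial_t v = iDv$, differentiating once more yields $\partial_t^2 v = -D^2 v = \Delta v$, so $v$ solves the wave equation. For $u$, the definition \eqref{eq:SHexpansion} of $e^{iTD_{\mathbb{S}^d}}$ gives $\partial_T^2 U = -D_{\mathbb{S}^d}^2 U = (\Delta_{\mathbb{S}^d} - (d-1)^2/4)U$, and Lemma~\ref{lem:main_penrose} then forces $(\partial_t^2 - \Delta)u = 0$ as well.

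It remains to match Cauchy data at $t=0$. The equality $u(0,x) = v(0,x) = g(x)$ is immediate from Definition~\ref{def:Penrose_Transform} and the fact that $t=0$ corresponds to $T=0$. For the first time derivative, I would differentiate the defining equations \eqref{eq:PenroseEquations} to obtain
\begin{equation*}
\partial_t T\big|_{t=0} = \tfrac{2}{1+r^2} = \Omega_0, \qquad \partial_t R\big|_{t=0} = 0, \qquad \partial_t \Omega\big|_{t=0} = 0,
\end{equation*}
where the last identity follows from $\Omega = \cos T + \cos R$ together with $T|_{t=0}=0$. Because $X = (\cos R,\omega\sin R)$ depends on $t$ only through $R$, this also gives $\partial_t X|_{t=0} = 0$. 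Applying the chain rule to $u = \Omega^{(d-1)/2}\, U(T,X)$ therefore yields
\begin{equation*}
\partial_t u\big|_{t=0} = \Omega_0^{(d-1)/2}\cdot \Omega_0 \cdot \partial_T U\big|_{T=0} = i\,\Omega_0^{(d+1)/2}\, D_{\mathbb{S}^d} G(X),
\end{equation*}
and the intertwining law of Lemma~\ref{lem:morpurgo} identifies the right-hand side with $iDg(x) = \partial_t v(0,x)$.

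Once both Cauchy data agree and both functions are smooth solutions of the wave equation, uniqueness for the Cauchy problem delivers $u \equiv v$ on $\mathbb{R}^{1+d}$, which is precisely the claimed identity.

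The main obstacle I anticipate is the computation of $\partial_t u|_{t=0}$: the fortunate vanishings $\partial_t R|_{t=0} = \partial_t \Omega|_{t=0} = 0$ make the chain rule collapse to a single term, after which Lemma~\ref{lem:morpurgo} does exactly the work needed to convert the spherical operator $D_{\mathbb{S}^d}$ into the Euclidean operator $D$. Without this intertwining law one would be left with a boundary mismatch; with it, the proof reduces to bookkeeping.
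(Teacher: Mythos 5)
Your proof is correct, and it is the mirror image of the paper's argument: both proofs reduce the identity to the statement that the two sides solve the same Cauchy problem and then invoke uniqueness, using exactly the same ingredients (Lemma~\ref{lem:main_penrose} for the equation, Lemma~\ref{lem:morpurgo} together with the vanishing of the time derivatives of the conformal factor for the data). The difference is the side of the Penrose map on which uniqueness is applied. The paper pulls $e^{itD}g$ back to the sphere, so it must prove uniqueness for the problem \eqref{eq:curved_wave_initial_value} on the Penrose diamond, which it does with a bespoke energy identity on the shrinking cross-sections $0\le R<\pi-|T|$; this is the most technical part of its proof. You instead push $e^{iTD_{\mathbb S^d}}G$ forward to Minkowski space, where $u=\Omega^{\frac{d-1}{2}}\,(e^{iTD_{\mathbb S^d}}G)\circ\mathcal P$ is smooth by Lemma~\ref{lem:main_penrose}, verify via \eqref{eq:PenroseEquations} that $\partial_t T|_{t=0}=\Omega_0$, $\partial_t R|_{t=0}=\partial_t\Omega|_{t=0}=0$ (your chain-rule bookkeeping is correct and is the exact counterpart of the paper's $\partial_T\Omega|_{T=0}=0$, $\partial_T|_{T=0}=\Omega_0^{-1}\partial_t|_{t=0}$), and then cite the classical uniqueness theorem for the wave equation on $\R^{1+d}$. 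That citation is legitimate --- finite speed of propagation gives uniqueness of $C^2$ solutions with no decay hypotheses --- but you should say so explicitly, since that is precisely the step the paper chose to prove by hand rather than quote; your route buys a shorter, more standard argument, while the paper's stays entirely on the compact side and is self-contained about the uniqueness it uses. Note also that both arguments tacitly assume that $e^{itD}g$ is itself a classical solution of the wave equation with data $(g,iDg)$ for the class of $g$ under consideration, so this is not an additional gap on your part.
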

\noindent In light of~\eqref{eq:SHexpansion}, $e^{iTD_{\mathbb S^d}} G(X)$ is defined for every $(T, X)\in \mathbb R\times \mathbb S^d$, but it is related to $e^{itD}g(x)$ only when $(T, X)\in \mathcal{P}(\mathbb R^{1+d})$.   
\begin{proof}[Proof of Proposition~\ref{thm:main_penrose}] 
Consider the initial value problem 
\begin{equation}\label{eq:curved_wave_initial_value}
    \begin{array}{cc}
        \partial_T^2 V=\Delta_{\mathbb S^d} V-\frac{(d-1)^2}{4}V\text{ on }\mathcal P(\mathbb R^{1+d}), & (V, \partial_T V)|_{T=0}=(F, iD_{\mathbb S^d} F),
    \end{array}
\end{equation}
for an arbitrary initial datum $F\in C^\infty(\mathbb S^d)$. Any two solutions to~\eqref{eq:curved_wave_initial_value} with the same initial datum $F$ must coincide on $\mathcal{P}(\mathbb R^{1+d})$. Indeed, differentiating the \textit{energy}\footnote{Recall that the Penrose diamond $\mathcal{P}(\mathbb R^{1+d})$ is described via $0\le R< \pi-\lvert T\rvert$.}
\begin{equation*}
    E(T):=\int_0^{\pi-|T|}\!\!\! \int_{\mathbb S^{d-1}}\! \left( (\partial_T V)^2 + \lvert\nabla_{\mathbb{S}^{d}} V\rvert^2 +\frac{(d-1)^2}{4} V^2 \right) \, \d\sigma_{\mathbb S^{d-1}} (\sin R)^{d-1} \d R,
\end{equation*}
integrating by parts and then invoking the PDE in~\eqref{eq:curved_wave_initial_value}, we obtain for $T\in(-\pi, \pi)$
\begin{equation*}
    \dot{E}(T)=
    \begin{cases} 
        \displaystyle
        (\sin R)^{d-1}\left.\left( -(\partial_T V-\partial_R V)^2-\frac{\lvert \nabla_{\mathbb S^{d-1}} V\rvert^2}{(\sin R)^2}-\frac{(d-1)^2}{4} V^2\right)\right|_{R=\pi- T}\!\!\!\!\!\!\!& ,T>0, \\
        \displaystyle
        \left.(\sin R)^{d-1}\left( (\partial_T V+\partial_R V)^2+\frac{\lvert \nabla_{\mathbb S^{d-1}} V\rvert^2}{(\sin R)^2}+\frac{(d-1)^2}{4} V^2\right)\right|_{R=\pi+ T}\!\!\!\!\!\!\!& ,T<0.
    \end{cases}
\end{equation*}
If $F=0$, then $E(0)=0$, which then implies $E(T)=0$ for every $T\in (-\pi, \pi)$ by the above identity for $\dot{E}(T)$. The claimed uniqueness follows at once.
Now  let 
\begin{equation*}
    \begin{array}{ccc}
        u(t, x):=e^{itD}g(x),& U(T, X):=\Omega^{\frac{1-d}{2}}u(t, x),& \text{where }(T, X)=\mathcal P(t, x).
    \end{array}
\end{equation*}
It is clear from  definition~\eqref{eq:SphFracOp} of $D_{\mathbb S^d}$ that $V(T, X)=e^{iTD_{\mathbb S^d}} G(X)$ satisfies~\eqref{eq:curved_wave_initial_value} with initial data $(G, iD_{\mathbb S^d} G)$. We claim that $U$ also solves~\eqref{eq:curved_wave_initial_value} with the same initial data as $V$. Once this is proved, the aforementioned uniqueness will imply that $U$ and $V$ must agree on $\mathcal P(\mathbb R^{1+d})$, completing the proof of Proposition~\ref{thm:main_penrose}. 
To verify the claim, we start by noticing that $u$ solves
\begin{equation}\label{eq:wave_initial_value}\notag
    \begin{array}{ccc}
        \partial_t^2 u=\Delta u, & u|_{t=0}=g,& \partial_t u|_{t=0}=i Dg,
    \end{array}
\end{equation}
and so the first identity in~\eqref{eq:curved_wave_initial_value} is an immediate consequence of Lemma~\ref{lem:main_penrose}. The fact that $U|_{T=0}=G$ is also immediate, as we remarked right after Definition~\ref{def:Penrose_Transform}. To check that $\partial_T U|_{T=0}=iD_{\mathbb S^d} G$, note that $\partial_T \Omega|_{T=0}=0$ and $\partial_T|_{T=0}=\Omega_0^{-1}\partial_t|_{t=0}$, which respectively follow from~\eqref{eq:Omega}, and~\eqref{eq:inverse_Penrose_Map} together with the chain rule. Lemma~\ref{lem:morpurgo} then implies
\begin{equation}\label{eq:check_iDS_penrose_proof}\notag
    \partial_T U|_{T=0}=\Omega_0^{-\frac{d+1}{2}}\partial_t u|_{t=0}= i\Omega_0^{-\frac{d+1}{2}}Dg=iD_{\mathbb S^d}G, 
\end{equation}
which completes the proof of the proposition.
\end{proof}

\subsection{Application to the Euler--Lagrange equation}\label{apsec:A1} 
In this section, we establish formula~\eqref{eq:LHSPenrose}, which is a consequence of the following result. Recall  $g_\star=\Omega_0^{\frac{d-1}2}$.
\begin{lemma}\label{lem:LHSPenroseAppendix}
    Let $g$ be the Penrose transform of $G$. Then
    \begin{equation}\label{eq:LHSPenroseAppendix}
        \int_{\R^{1+d}} |e^{itD} g_\star|^{p-2} (\overline{e^{itD} g_\star}) (e^{itD} g) \,\d t\d x = \frac12 \int_{-\pi}^\pi \int_{\mathbb S^d} e^{-iT\frac{d-1}{2}} (e^{iTD_{\mathbb S^d}} G)\lvert \Omega\rvert^{\gamma_p} \d \sigma\d T.
    \end{equation}
\end{lemma}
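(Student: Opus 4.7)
\smallskip

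\noindent\textbf{Proof plan for Lemma \ref{lem:LHSPenroseAppendix}.} The strategy is to push the entire integrand through the Penrose map and then exploit a symmetry to double the integral over the Penrose diamond into one over the full cylinder $[-\pi,\pi]\times\mathbb S^d$.

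First, I would evaluate $e^{itD}g_\star$ explicitly. Since $g_\star$ is the Penrose transform of the constant $\mathbf{1}$ on $\mathbb S^d$ (Remark~\ref{rem:foschian_is_constant}) and $D_{\mathbb S^d}\mathbf{1}=\tfrac{d-1}{2}\mathbf 1$ by \eqref{eq:D_S_definition}, Proposition~\ref{thm:main_penrose} gives $e^{itD}g_\star(x)=\Omega^{(d-1)/2}e^{iT(d-1)/2}$ on the Penrose diamond. Combined with $e^{itD}g(x)=\Omega^{(d-1)/2}e^{iTD_{\mathbb S^d}}G(X)$, and using that $\Omega>0$ on the diamond (so absolute values can be dropped), the integrand becomes
\[
\Omega^{(d-1)(q-2)/2}\cdot \Omega^{(d-1)/2}e^{-iT(d-1)/2}\cdot \Omega^{(d-1)/2}e^{iTD_{\mathbb S^d}}G(X)=\Omega^{(d-1)q/2}\,e^{-iT(d-1)/2}e^{iTD_{\mathbb S^d}}G(X).
\]
(Here I assume that the exponent in the statement is $q-2$; with the stated $p-2$ the exponent of $\Omega$ would not match $\gamma_p$, so I read this as a typo for the intended exponent dictated by \eqref{eq:EulerLagrange}.)

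Second, I would change variables via the Penrose map using \eqref{eq:pushfoward_measure}, i.e.\ $\,\d t\,\d x=\Omega^{-(d+1)}\,\d T\,\d\sigma$. The total power of $\Omega$ becomes $(d-1)q/2-(d+1)$, which equals $\gamma_p=(d+1)(p'/2-1)$ in view of $q=\tfrac{d+1}{d-1}p'$. This yields
\[
\mathrm{LHS}(\eqref{eq:LHSPenroseAppendix})=\int_{\mathcal P(\R^{1+d})}|\Omega|^{\gamma_p}\,e^{-iT(d-1)/2}e^{iTD_{\mathbb S^d}}G(X)\,\d T\,\d\sigma.
\]

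The main step will be producing the factor of $\tfrac12$. The Penrose diamond $\{R+|T|<\pi\}$ fills exactly one half of $[-\pi,\pi]\times\mathbb S^d$, so it suffices to find a measure-preserving involution $\Psi$ swapping the diamond with its complement and preserving the integrand. I would propose
\[
\Psi(T,X):=\bigl(T-\pi\,\mathrm{sgn}(T),\,-X\bigr).
\]
Since $X=(\cos R,\omega\sin R)\mapsto-X$ corresponds to $(R,\omega)\mapsto(\pi-R,-\omega)$, one computes that $\Psi$ is measure-preserving, that $|T'|+R'=2\pi-(|T|+R)$ (so $\Psi$ does exchange the diamond and its complement), and that $\Omega\circ\Psi=-\Omega$, so $|\Omega|^{\gamma_p}$ is $\Psi$-invariant. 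The remaining ingredient, which is the one requiring actual computation, is invariance of $e^{-iT(d-1)/2}e^{iTD_{\mathbb S^d}}G(X)$ under $\Psi$. By linearity and density (expanding $G$ in spherical harmonics) it is enough to verify this for $G=Y_\ell$ of arbitrary degree $\ell$; using $Y_\ell(-X)=(-1)^\ell Y_\ell(X)$ and $e^{iTD_{\mathbb S^d}}Y_\ell=e^{iT(\ell+(d-1)/2)}Y_\ell$, the phase transforming $T\mapsto T\mp\pi$ produces factors $e^{\pm i\pi(d-1)/2}$ and $e^{\mp i\pi(\ell+(d-1)/2)}$, whose product with $(-1)^\ell$ collapses to $e^{\mp i\pi\ell}\cdot e^{i\pi\ell}\in\{1,e^{2\pi i\ell}\}=\{1\}$. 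Hence the integrand is $\Psi$-invariant, so the integral over the complement equals the integral over the diamond, and the identity follows.

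\smallskip

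\noindent I expect the phase-cancellation verification to be the only real obstacle; the rest is bookkeeping of Jacobian factors and the algebraic identity $q=\tfrac{d+1}{d-1}p'$. A minor care point is the justification of linearity/density when passing from spherical-harmonic test data to general $G\in C^\infty(\mathbb S^d)$, which is routine by the $L^2(\mathbb S^d)$ orthonormal basis of spherical harmonics together with the boundedness of $e^{iTD_{\mathbb S^d}}$.
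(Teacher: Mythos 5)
Your proposal is correct and follows essentially the paper's own proof: the same explicit evaluation of $e^{itD}g_\star$ via Proposition~\ref{thm:main_penrose} and Remark~\ref{rem:foschian_is_constant}, the same change of variables via \eqref{eq:pushfoward_measure} producing the exponent $\gamma_p$, and the same symmetry $(T,X)\mapsto(T\pm\pi,-X)$ with the phase cancellation through $Y_\ell(-X)=(-1)^\ell Y_\ell(X)$ — your involution $\Psi$ is just a repackaging of the paper's identity $V(T+\pi,-X)=V(T,X)$ and its subsequent region decomposition. You are also right that the exponent $p-2$ in the statement is a typo for $q-2$, which is what the paper's proof (and the left-hand side of \eqref{eq:EulerLagrange}) actually uses.
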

\begin{proof}
The Penrose transform of the constant function $G_\star={\bf 1}$ is $g_\star$, and $e^{iTD_{\mathbb S^d}}\mathbf 1 = e^{iT\frac{d-1}{2}}$. By Proposition~\ref{thm:main_penrose} and a change of variables (recall \eqref{eq:pushfoward_measure}),
\begin{equation}\label{eq:EL_first_computation}
    \begin{split}
        &\int_{\R^{1+d}} |e^{itD} g_\star|^{q-2} \overline{e^{itD} g_\star} e^{itD} g \,\d t\d x \\&=
        \int_{\mathcal{P}(\R^{1+d})}| \Omega^{\frac{d-1}2} e^{iTD_{\mathbb S^d}} {\bf 1}|^{q-2} (\overline{ \Omega^{\frac{d-1}2}e^{iTD_{\mathbb S^d}} {\bf 1}})  ( \Omega^{\frac{d-1}2} e^{iTD_{\mathbb S^d}} G) \Omega^{-(d+1)}\d T\d \sigma \\
        &=\int_{\mathcal{P}(\R^{1+d})}e^{-iT\frac{d-1}{2}} (e^{iTD_{\mathbb S^d}} G)|\Omega|^{\gamma_p} \d T\d \sigma,
    \end{split}
\end{equation}
where $\gamma_p=\frac{d-1}2q-(d+1)=(d+1)(\frac{p'}2-1)$ was defined in \eqref{eq:alpha_p_q_first}. This is still not the desired~\eqref{eq:LHSPenroseAppendix}, since the last integral in~\eqref{eq:EL_first_computation} is over the Penrose diamond $\mathcal P(\mathbb R^{1+d})$ and not the product space $[-\pi, \pi]\times \mathbb S^d$. 
To remedy this, observe that 
\begin{equation}\label{eq:invU}\notag
    e^{i(T+\pi)D_{\mathbb S^d}}G(-X)=e^{i\frac{d-1}{2}\pi}e^{iTD_{\mathbb S^d}}G(X),
\end{equation}
for every $(T,X)\in[-\pi,\pi]\times\mathbb S^{d}$. This follows from ~\eqref{eq:SHexpansion} and  the fact that $Y_{\ell}(-X)=(-1)^\ell Y_{\ell}(X)$. Recalling  $\Omega=\cos T+ \cos R$, we conclude that the integrand $V(T,X):=e^{-iT\frac{d-1}{2}} (e^{iTD_{\mathbb S^d}} G)\lvert\Omega\rvert^{\gamma_p}$ satisfies
\begin{equation}\label{eq:crucial_symmetry}
    V(T+\pi,-X)=V(T,X), \text{ for every }(T,X)\in[-\pi,\pi]\times\mathbb S^d.
\end{equation}
As noticed in \cite[Lemma 3.6]{Ne18}, this symmetry implies $\int_{\mathcal P(\mathbb R^{1+d})} V=\frac12 \int_{-\pi}^\pi \int_{\mathbb S^d} V$. Indeed, letting 
$$ 
W(T, R):=\int_{\mathbb S^{d-1}} V(T, \cos R, \omega \sin R)(\sin R)^{d-1}\, \d\sigma_{\mathbb S^{d-1}}(\omega),$$ 
we have that $W(T+\pi, \pi- R)=W(T, R)$, and so 
\begin{equation}\label{eq:integral_symmetries}\notag
    \begin{array}{c}
       \displaystyle \int_{-\pi}^0\int_{\pi+T}^\pi W(T, R)\, \d R\d T= \int_{0}^\pi\int_{0}^{\pi- T}  W(T, R)\, \d R\d T,\\ \displaystyle \int_{-\pi}^0\int_{0}^{\pi+T} W(T, R)\, \d R\d T= \int_{0}^\pi\int_{\pi- T}^{\pi}  W(T, R)\, \d R\d T,
    \end{array}
\end{equation}
as can be seen via the changes of variables $T'=T+\pi$ and $R'=\pi-R$. The claim follows immediately: 
\begin{equation*}
    \begin{split}
        \int_{-\pi}^\pi \int_{\mathbb S^d} V &= \left(  \int_{-\pi}^0\int_{\pi+T}^\pi + \int_{-\pi}^0\int_{0}^{\pi+T} + \int_{0}^\pi\int_{0}^{\pi- T}+\int_{0}^\pi\int_{\pi- T}^{\pi} \right) W(T, R)\, \d R\d T \\ 
        &= 2\left(  \int_{-\pi}^0\int_{0}^{\pi+T} + \int_{0}^\pi\int_{0}^{\pi- T}\right) W(T, R)\, \d R\d T=2 \int_{\mathcal P(\mathbb R^{1+d})} V.
    \end{split}
\end{equation*}
This establishes~\eqref{eq:LHSPenroseAppendix} and concludes the proof of the lemma.
\end{proof}

\subsection{Symmetry}\label{apsec:tangent} 
In this final section, we elaborate on the symmetry considerations underlying formula \eqref{eq:intro_symmetry} and Remark~\ref{rem:no_low_degree_spherical_harmonics}. 

 We make two remarks regarding formula~\eqref{eq:intro_symmetry} that we now recall,
\begin{equation}\label{eq:Lq_Norm_Symmetric_Only_SteinTomas}\tag{\ref{eq:intro_symmetry}}
       \lVert e^{it D}g \rVert_{L^q(\mathbb R^{1+d})}^q= \frac12 \int_{-\pi}^\pi \int_{\mathbb S^d} \left \lvert e^{iTD_{\mathbb S^d}} G\right\rvert^q \lvert \Omega\rvert^{(d+1)(\frac{p'}{2}-1)} \d \sigma\d T,
\end{equation}
and which is straightforward to prove via the computations in \S\ref{apsec:A1}, which rely on the crucial symmetry \eqref{eq:crucial_symmetry}.
Firstly, the function $\lvert \Omega\rvert^{(d+1)({p'}/{2}-1)}$ is integrable on $[-\pi, \pi]\times \mathbb S^d$ if and only if $p,q$ belong to the conjectural range~\eqref{eq:RCrange}. Indeed, recalling \eqref{eq:Omega}, the  integral on the right-hand side of \eqref{eq:intro_symmetry} becomes
\begin{equation*}
    \int_{-\pi}^\pi\int_0^\pi\lvert \cos T+\cos R\rvert^{(d+1)(\frac{p'}{2}-1)}\left(\int_{\mathbb S^{d-1}}\left \lvert e^{iTD_{\mathbb S^d}} G\right\rvert^q\, \d \sigma_{\mathbb S^{d-1}}\right) (\sin R)^{d-1} \,\d R\, \d T. 
\end{equation*}
The  singularity at $R=\pi-\lvert T\rvert$ is integrable if and only if  ${(d+1)(\frac{p'}{2}-1)}>-1$, or  $1<p<\frac{2d}{d-1}$, as claimed.
Secondly, in the Strichartz case $p=2$ identity \eqref{eq:intro_symmetry}  implies that the left-hand side of~\eqref{eq:cone_restr_recast} remains invariant\footnote{This is {\it not} the case for $p\ne 2$, in light of the symmetry breaker $\lvert\Omega\rvert=\lvert \cos T+ X_0\rvert$.} under the action 
    \begin{equation}\label{eq:G_rotates} 
        G\mapsto G\circ \rho, 
    \end{equation}
where $\rho$ denotes an arbitrary rotation of $\mathbb S^d$. In this case, the right-hand side of~\eqref{eq:cone_restr_recast} is also invariant under~\eqref{eq:G_rotates}. Indeed, by Lemma~\ref{lem:morpurgo} (and $\Omega_0^d\, \d x= \d \sigma$), 
\begin{equation*}
    \lVert \widehat g\rVert_{L^2(\R^d,|\xi|\d\xi)}^2=\int_{\mathbb R^d} \overline{g(x)} Dg(x)\, \d x= \int_{\mathbb S^d} \overline{G(\omega) }D_{\mathbb S^d} G(\omega)\, \d \sigma,
\end{equation*}
which is manifestly invariant under rotations. Thus, for $p=2$,~\eqref{eq:G_rotates} is indeed a hidden symmetry of the Strichartz estimate \eqref{eq:SteinTomas}.

As noted in the introduction, the Fourier extension operator from the cone coincides with the half-wave propagator. On the other hand, if we equip the two-sheeted cone $\{(\tau, \xi)\in \mathbb R^{1+d} : \tau^2=\lvert\xi\rvert^2\}$ with the Lorentz-invariant measure $\ddirac{\tau^2-\lvert\xi\rvert^2}\, \d\tau\d\xi$, the Fourier extension operator then coincides with the propagator for the wave equation, $\partial_t^2u = \Delta u$. Given a solution $u$ to the latter, Lemma \ref{lem:main_penrose} yields the following formula, which is analogous to \eqref{eq:intro_symmetry}:
\begin{equation}\label{eq:full_wave_symmetry}
    \lVert u\rVert_{L^q(\mathbb R^{1+d})}^q=\int_{\mathcal P(\mathbb R^{1+d})}\lvert U\rvert^q\lvert \Omega\rvert^{(d+1)(\frac{p'}{2}-1)} \d \sigma\d T,
\end{equation}
with $\partial_T^2U=\Delta_{\mathbb S^d}U-\frac{(d-1)^2}{4}U$. It turns out that an arbitrary solution to this equation satisfies the crucial symmetry \eqref{eq:crucial_symmetry} if and only if $d$ is an odd number. Consequently, the right-hand side of \eqref{eq:full_wave_symmetry} can be extended to an integral on the Cartesian product $[-\pi, \pi]\times \mathbb S^d$, like in \eqref{eq:intro_symmetry}, only when $d$ is odd. In particular, the right-hand side of \eqref{eq:full_wave_symmetry} is invariant under arbitrary rotations of $\mathbb S^d$ only when $d$ is odd (for $p=2$). This leads to  $\mathfrak F$-functions from Definition \ref{def:Foschian} not being critical points for the Fourier extension inequality from the two-sheeted cone in the Strichartz case $p=2$ when $d$ is even; this fact, which was discovered in \cite{Ne18}, is discussed at length in  \cite[§5.1]{NOST22}.

    We conclude with a discussion of Remark~\ref{rem:no_low_degree_spherical_harmonics}, which applies to general exponents $p$.
The symmetry group $\mathcal S$ from \S\ref{sec:symmetries} consists of multiplication by unimodular complex constants, conic dilations, Lorentz boosts and spacetime translations. We disregard the latter two because they fail to preserve radial symmetry and, in light of \eqref{eq:F_is_Yk},  only consider radial functions. On the other hand, we add  multiplication by positive constants, which is not a symmetry in the sense of \S\ref{sec:symmetries}, but does leave the functional $\Phi_{p, q}$ from~\eqref{eq:Phi_Functional} invariant. 
By such invariance, applying the infinitesimal generators of these symmetries to the function $g_\star$ from \eqref{eq:Foschian_Physical} results in a vector space of test functions that automatically satisfy the Euler--Lagrange equation~\eqref{eq:EulerLagrange}, and are thus unsuitable to disprove it. We now show that this vector space coincides with the Penrose transform of the space of zonal spherical harmonics of degree zero and one.
When applied to $g_\star$, the generators of the aforementioned symmetries form the  vector space 
\begin{equation}\label{eq:tangent_space_physical}
    \mathrm{span}_{\mathbb R}( g_\star, ig_\star, x\cdot \nabla g_\star, iDg_\star),
\end{equation}
corresponding to multiplication by positive constants, unimodular complex constants, conic dilations and time translations. Since $g_\star(x)\cong_d (1+\lvert x\rvert^2)^{\frac{1-d}2}$, we have
\begin{equation*}
    x\cdot \nabla g_\star (x) \cong_d \lvert x \rvert^2 \Omega_0^{\frac{d+1}2}(x),
\end{equation*}
whereas \eqref{eq:D_S_definition} and Lemma~\ref{lem:morpurgo} together imply
\begin{equation*}
    iD g_\star (x)= i\frac{d-1}{2}(1+X_0)^\frac{d+1}{2}.
\end{equation*}
Thus $x\cdot \nabla g_\star(x)\cong_d \Omega_0^{\frac{d-1}{2}}(1-X_0)$ and $iD g_\star (x) \cong_d \Omega_0^{\frac{d-1}{2}} (i + i X_0)$. We conclude that ~\eqref{eq:tangent_space_physical} is the Penrose transform of 
\begin{equation}\label{eq:tangent_space_Penrose}\notag
    \mathrm{span}_{\mathbb R}(1, i , 1-X_0, i+iX_0) = \mathrm{span}_{\mathbb R}(1, i, X_0, iX_0),
\end{equation}
which coincides with the complex vector space of zonal spherical harmonics of degree zero and one, as claimed. \\

\end{document}